%
%


\documentclass[a4paper]
{cedram-smai-jcm}

\usepackage{tikz}
\usepackage{subcaption}
\usepackage{amssymb, amsmath, amsthm,stmaryrd}
\usepackage{mathtools}
\usepackage{todonotes}
\usepackage{enumerate}
\usepackage{enumitem}

\newtheorem{assum}{Assumption}

\DeclareMathOperator{\dis}{dist}
\newcommand{\tnorm}[1]{\vert\hspace{-0.3mm}\Vert#1\Vert\hspace{-0.3mm}\vert}
\providecommand{\abs}[1]{\left\lvert#1\right\rvert}
\providecommand{\norm}[1]{\left\lVert#1\right\rVert}
\DeclareMathOperator{\dist}{dist}
\renewcommand{\leq}{\leqslant}
\renewcommand{\geq}{\geqslant}

\newcommand{\dPhi}{ \underline{A} }
\newcommand{\dX}{\mathrm{d}x}
\newcommand{\dY}{\mathrm{d}y}
\newcommand{\dS}{\mathrm{d}S}
\newcommand{\jump}[1]{\llbracket#1\rrbracket}
\DeclarePairedDelimiterX{\inp}[2]{(}{)}{#1, #2}
\newcommand{\tangular}[1]{ \llbracket\kern-0.5ex|#1|\kern-0.5ex\rrbracket} 
\newcommand{\avg}[1]{ \{\!\!\{#1\} \!\!\} }
\newcommand{\calL}{\mathcal{L}}
\newcommand{\Ext}{\mathcal{E}}
\newcommand{\ID}{\mathrm{id}}
\newcommand{\mesh}{\mathcal{T}_h}
\newcommand{\facets}{\mathcal{F}}
\newcommand{\el}{T}
\newcommand{\BrokenOmega}{\Omega_{1} \cup \Omega_{2}}
\newcommand{\BrokenOmegah}{\Omega_{1,h} \cup \Omega_{2,h}}
\newcommand{\ActiveOmega}{\Omega^{+}}
\newcommand{\lset}{ \phi }
\newcommand{\OmegaLin}{ \Omega^{\text{lin}} }
\newcommand{\GammaLin}{ \Gamma^{\text{lin}} }
\newcommand{\Cutel}{ \mathcal{T}^{\Gamma} } 
\newcommand{\CutelExt}{ \mathcal{T}^{\Gamma}_{+}} 
 
\newcommand{\Cutdom}{ \Omega^{\Gamma} } 
\newcommand{\CutdomExt}{ \Omega^{\Gamma}_{+}}

\newcommand{\curvedFes}[1]{ V_{h,#1} }
 
\newcommand{\FES}[2]{ V_{h,#1}^{#2}}
\newcommand{\normal}{ n } 
\newcommand{\Ptan}[1]{ P_{#1} }
\providecommand{\StabCIP}[2]{ J_{ \mathrm{CIP} } \left( #1, #2 \right) }
\providecommand{\StabGLS}[2]{ J_{ \mathrm{GLS} } \left( #1, #2 \right) }

\providecommand{\StabTikh}[2]{ J_{ \alpha } \left( #1, #2 \right) }
\providecommand{\StabNitsche}[2]{ J_{ \mathrm{H} }^{ \Gamma_h } \left( #1, #2 \right) }
\providecommand{\StabNablaGamma}[2]{ J_{ \mathrm{T} }^{ \Gamma_h } \left( #1, #2 \right) }
\providecommand{\StabNablaNormal}[2]{ J_{ \mathrm{N} }^{ \Gamma_h } \left( #1, #2 \right) }

\providecommand{\StabIFall}[2]{ J^{ \Gamma_h } \left( #1, #2 \right) }
\newcommand{\InterpBackgroundMesh}{I_{h}}

\newcommand{\InterpUnfitted}{ I_h^{\Gamma} }
\newcommand{\InterpBackgroundMeshZ}{I_{h}^{0}}
\newcommand{\InterpUnfittedZ}{ \InterpBackgroundMeshZ }
\newcommand{\ExtMeshTrafo}{E}

\title[Unique continuation over an interface]{Unique continuation for an elliptic interface problem using unfitted isoparametric finite elements}



\author[E. Burman]{\firstname{Erik} \lastname{Burman}}
\address{Department of Mathematics, University College London, United Kingdom}
\email{e.burman@ucl.ac.uk} 
\thanks{This work was funded EPSRC grant EP/V050400/1.}


\author[J. Preuss]{\firstname{Janosch} \lastname{Preuss}}
\address{Department of Mathematics, University College London, United Kingdom}
\email{j.preuss@ucl.ac.uk}

\keywords{unfitted finite element method, unique continuation, interface problems, isoparametric finite element method, geometry errors, conditional H\"older stability}
  
\subjclass{35J15, 65N12, 65N20, 65N30, 86-08}

\begin{document}

\begin{abstract}
We study unique continuation over an interface using a stabilized unfitted finite element method tailored to the conditional stability of the problem.
The interface is approximated using an isoparametric transformation of the background mesh and 
the corresponding geometrical error is included in our error analysis.
To counter possible destabilizing effects caused by non-conformity of the discretization and cope with the interface conditions, we introduce adapted regularization terms. 
This allows to derive error estimates based on conditional stability. 
The necessity and effectiveness of the regularization is illustrated in numerical experiments. 
We also explore numerically the effect of the heterogeneity in the coefficients on the ability to reconstruct the solution outside the data domain. For Helmholtz equations we find that a jump in the flux impacts the stability of the problem significantly less than the size of the wavenumber.
\end{abstract}

\maketitle

\section{Introduction}\label{sec:intro}

\subsection{Motivation}\label{ssection:motivation}
Recently, there has been an increasing interest in the development of numerical methods for unique continuation problems, see e.g.\ \cite{BHL18a,BNO19,BB20,BFMO21,BDE21,MM22,DMS22}.
These methods are based on so called conditional stability estimates which represent a quantitative 
form of the unique continuation property for solutions of certain partial differential equations (PDEs).
In addition, many physical applications involve interfaces over which material parameters can exhibit jump discontinuities, e.g.\ in seismic wave propagation. 
Despite the practical relevance of these problems, numerical methods for unique continuation involving interfaces seem to be unavailable in the literature.
\par 
On the other hand, the development of numerical methods, e.g.\ finite elements methods (FEMs), for well-posed problems involving interfaces is well advanced. 
So called unfitted FEMs are very appealing to treat these problems, see e.g. \cite{HH02,ZCC11,M12,BC15,WX17,LR17,BE18HHO,YJF20,CLX21}, since they allow the use of simple meshes which are independent of the possibly complex geometry of the interface.
However, the analysis and implementation of unfitted FEMs imposes some challenges (stability of variational formulations, usually only an implicit description of the geometry is available, ...) which are poorly understood in the context of ill-posed problems. 
In this paper, we make some progress in this direction by developing an unfitted FEM for a unique continuation problem involving an interface.
\subsection{The problem setting}
We define the problem setting as sketched in Figure~\ref{fig:problem-overview-unfitted}. 
Let $\Omega \subset \mathbb{R}^d, d \in \{2,3\}$, be a bounded, connected and open set split by a smooth interface $\Gamma$ into two connected components $\Omega_1$ and $\Omega_2$ such that $\Gamma = \partial \Omega_1$ 
and $\Omega = \Omega_1 \cup \Gamma \cup \Omega_2$. 
Given the functions $u_i$ on $\Omega_i$, we identify $u$ on $\BrokenOmega$ with the pair $(u_1,u_2)$. 
\begin{figure}[htbp]
\centering
\includegraphics[scale=0.19]{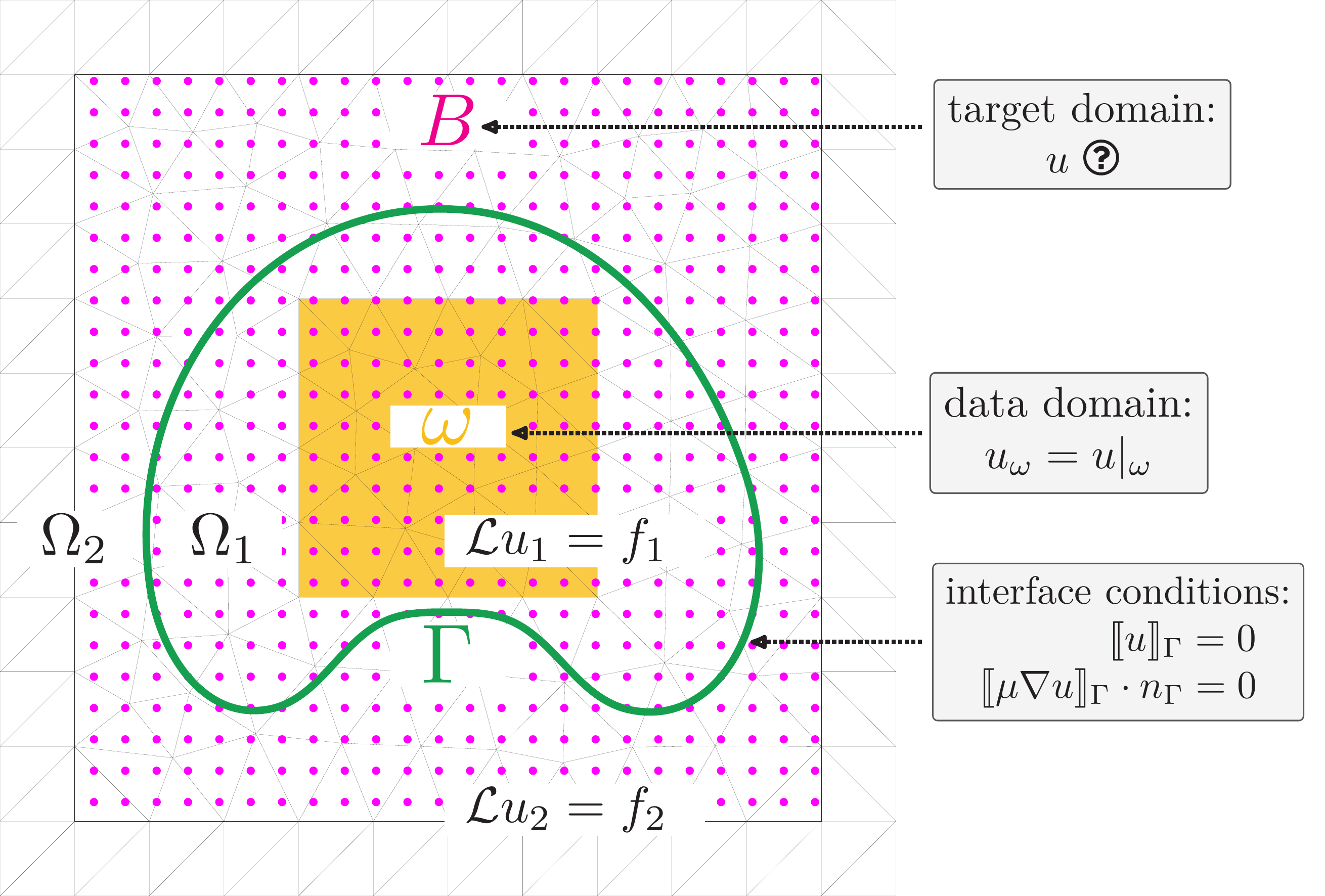}
\caption{Sketch of geometry and data for unique continuation over an interface. } 
\label{fig:problem-overview-unfitted}
\end{figure}
We consider the elliptic interface problem 
\begin{align}
	\calL u = f & \text{ in } \BrokenOmega, \label{eq:PDE_bulk} \\
	\jump{u}_{\Gamma} = 0 & \text{ on } \Gamma, \label{eq:jump_IF} \\ 
	\jump{\mu \nabla u }_{\Gamma} \cdot n_{\Gamma} = 0 & \text{ on }  \Gamma, \label{eq:normal_jump_IF}
\end{align}
with $ \calL u := - \nabla \cdot (\mu \nabla u) - \rho u $  for a quantity $\mu$ that is discontinuous across $\Gamma$ and constant on each $\Omega_i$ with $\mu_i := \mu|_{\Omega_i} >0 $.  
Similarly, $\rho$ is assumed to be piecewise constant such that $\rho_i :=  \rho|_{\Omega_i}  \in \mathbb{R}$. 
Here we are primarily interested in the case of the Helmholtz equation for which $\rho_i = k_i^2 $ for some $k_i > 0$ representing 
the wavenumber, but the case of diffusion-reaction equations for which $\rho_i \leq 0$ is covered as well.
The jump over the interface $\Gamma$ is defined for $v = (v_1,v_2) \in H^1(\BrokenOmega) $ by $\jump{v}_{ \Gamma} := v_1 -  v_2$ and analogously for vector-valued functions. 
We fix the unit normal vector $\normal_{\Gamma}$ of $\Gamma$ to point from $\Omega_1$ into $\Omega_2$. \par 
Note that no boundary data on $\partial \Omega$ is assumed to be given, which renders this problem ill-posed. 
To recover a weak form of stability, we will assume that measurements 
\begin{equation}\label{eq:measurements}
u_{\omega} = u|_{\omega} 
\end{equation}
of a solution of (\ref{eq:PDE_bulk})-(\ref{eq:normal_jump_IF}) in a subset $\omega \subset \Omega$ are available. More precisely, we assume that $\omega$ is compactly contained in either $\Omega_1$ or $\Omega_2$.
The objective is then to extend the solution from $\omega$ into a larger set $B \subset \Omega$ across the interface. 
Possible applications where this task occurs include inverse problems in subsurface flows, seismology and biomedical applications.
From \cite[Theorem 1.1]{CW20}, see Appendix~\ref{A:cond-stab} for details, we obtain that the
following conditional stability estimate of H\"older-type holds.
\begin{coro}\label{cor:conditional_stabiliy_estimate}
Let $\omega \subset B \subset \Omega$ such that $B \setminus \omega$ does not touch the boundary of $\Omega$ and $\omega$ is 
contained either in $\Omega_1$ or $\Omega_2$. Then there exists a $\tau \in (0,1)$ such that for all $u \in H^1(\Omega)$ we have
\begin{equation}\label{eq:cond_stab}
\norm{ u }_{B} \leq  C \left( \norm{ u  }_{\omega} + \norm{ \calL u }_{ H^{-1}(\Omega)}  \right)^{\tau}
\left( \norm{ u  }_{\Omega} + \norm{ \calL u }_{ H^{-1}(\Omega)}  \right)^{1 - \tau}.
\end{equation}
\end{coro}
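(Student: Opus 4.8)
The statement is a consequence of the quantitative unique continuation estimate for the interface operator $\calL$ established in \cite[Theorem 1.1]{CW20}, which provides a three-ball (propagation of smallness) inequality already accounting for the transmission conditions \eqref{eq:jump_IF}--\eqref{eq:normal_jump_IF} across $\Gamma$. My plan is to reduce a general $u \in H^1(\Omega)$ with residual $\calL u \in H^{-1}(\Omega)$ to the homogeneous situation governed by that theorem, then to propagate the smallness of $u$ from $\omega$ to $B$ by chaining the local three-ball inequality over a suitable covering, and finally to package the resulting chain of multiplicative Hölder inequalities into the single symmetric estimate \eqref{eq:cond_stab}.

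First I would isolate the contribution of the residual $f := \calL u$. Solving a well-posed auxiliary Dirichlet problem for a coercive (shifted) version of the operator, I obtain a corrector $w \in H^1(\Omega)$ with $\norm{w}_{H^1(\Omega)} \leq C \norm{f}_{H^{-1}(\Omega)}$ such that $\calL(u-w)$ is reduced to a source controlled, in the norm demanded by \cite[Theorem 1.1]{CW20}, by $\norm{f}_{H^{-1}(\Omega)}$. For the diffusion-reaction case $\rho_i \leq 0$ the operator $\calL$ is itself coercive, so $w$ can be chosen with $\calL(u-w)=0$ exactly; in the Helmholtz case $\rho_i > 0$ a spectral shift avoids resonances and leaves only a zeroth-order remainder bounded in $L^2$ by $C\norm{f}_{H^{-1}(\Omega)}$. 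Applying the cited inequality to the corrected function $u-w$ and restoring $u$ by the triangle inequality is precisely the mechanism by which the residual enters \eqref{eq:cond_stab} through the weak $H^{-1}$ norm rather than through $L^2$.

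Next I would carry out the propagation of smallness. Because $B \setminus \omega$ does not touch $\partial\Omega$ and $\omega$ is compactly contained in one of the subdomains, $\overline{B}$ can be covered by a finite chain of balls $B^{(0)}, B^{(1)}, \dots, B^{(N)}$, each compactly contained in $\Omega$, with $B^{(0)} \subset \omega$ and consecutive members overlapping; some of these balls straddle $\Gamma$, which is exactly the configuration handled by \cite[Theorem 1.1]{CW20}. Iterating the three-ball inequality along this chain transfers the smallness of $u$ on $\omega$ successively to every ball of the covering. At each step the local exponent $\tau_j \in (0,1)$ composes multiplicatively, so that after finitely many steps one obtains a single global exponent $\tau \in (0,1)$, with the global bound $\norm{u}_\Omega$ serving as the large factor; here the interior location of the chain is essential, since no boundary data on $\partial\Omega$ is available.

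Finally, the estimate produced by the chaining has the additive shape $\norm{u}_B \leq C\,(\norm{u}_\omega + \norm{\calL u}_{H^{-1}(\Omega)})^\tau \norm{u}_\Omega^{1-\tau} + C\norm{\calL u}_{H^{-1}(\Omega)}$, and a routine use of Young's inequality together with $\norm{\calL u}_{H^{-1}(\Omega)} \leq (\norm{u}_\omega + \norm{\calL u}_{H^{-1}(\Omega)})^\tau(\norm{u}_\Omega + \norm{\calL u}_{H^{-1}(\Omega)})^{1-\tau}$ absorbs the residual into both factors, yielding the product form \eqref{eq:cond_stab}. I expect the genuinely hard analytic content — the three-ball inequality across the discontinuity of $\mu$, where the Carleman weights must be adapted to the jump in the coefficients — to be contained entirely in \cite[Theorem 1.1]{CW20}. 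Within the present reduction the only care needed is in routing the chain of balls across $\Gamma$ while keeping every ball away from $\partial\Omega$, and in controlling the corrector in the negative norm; both are bookkeeping rather than new estimates.
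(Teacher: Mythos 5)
Your proposal is correct and shares the paper's overall architecture: the whole content rests on the three-ball inequality of \cite[Theorem 1.1]{CW20}, which already handles balls straddling $\Gamma$, followed by a finite covering/chaining argument with multiplicatively composed exponents --- which the paper, like you, does not carry out explicitly but delegates to \cite[Section 5]{ARRV09} and \cite{R91}, including your remark that the chain must stay at positive distance from $\partial\Omega$.

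The one step where you genuinely diverge is the treatment of the residual $f := \calL u$. You homogenize: you solve a (spectrally shifted) coercive problem for a corrector $w$ with $\norm{w}_{H^1(\Omega)} \lesssim \norm{f}_{H^{-1}(\Omega)}$, apply the three-ball inequality to $u - w$, and restore $u$ by the triangle inequality, with a case distinction between $\rho_i \leq 0$ and the Helmholtz regime and, in the latter, a leftover zeroth-order source $\sigma w$ satisfying $\norm{\sigma w}_{\Omega} \lesssim \norm{f}_{H^{-1}(\Omega)}$. This is sound --- the leftover source does land in the admissible class of \cite{CW20} --- but it is more machinery than the hypothesis requires, because \cite[Theorem 1.1]{CW20} never demands homogeneity: its assumption is merely that $f$ be \emph{represented} as $\langle f, v\rangle = (f_0,v)_{\Omega} + (F,\nabla v)_{\Omega}$ with $\norm{f_0}_{\Omega} + \norm{F}_{\Omega} \leq \varepsilon$. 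The paper exploits exactly this: solving the coefficient-independent auxiliary problem $-\Delta \phi + \phi = f$ with $\phi \in H^1_0(\Omega)$ and taking $f_0 = \phi$, $F = \nabla\phi$ gives $\norm{f_0}_{\Omega} + \norm{F}_{\Omega} \leq 2\norm{f}_{H^{-1}(\Omega)}$ (in fact $\norm{\phi}_{H^1(\Omega)} = \norm{f}_{H^{-1}(\Omega)}$), so the theorem applies to $u$ itself with $\varepsilon \sim \norm{\calL u}_{H^{-1}(\Omega)}$ --- no corrector, no case distinction, no triangle-inequality bookkeeping, and no final Young-inequality absorption. Your route buys nothing additional here, while the paper's representation trick is a two-line reduction that is moreover uniform in $\mu_i$ and $\rho_i$ since the auxiliary operator is $-\Delta + 1$ rather than a shift of $\calL$; that said, your variant would become the natural one if the cited theorem had in fact been stated only for the homogeneous equation.
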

Here we introduced for $M \subset \mathbb{R}^d$ and $u,v \in L^2(M)$ the notation
\[ (u,v)_{M} := \int\limits_{M} u v \; \dX, \qquad \norm{v}_{M} :=  \sqrt{ (v,v)_{M} }.   \]
We point out that the constant $C$ and the parameter $\tau$ in (\ref{eq:cond_stab}) depend on bounds of the coefficients of the operator $\calL$ and on certain geometrical properties. The explicit form of this dependence is in general not known. 
What can be characterized though for the case when $\omega$ is a ball contained in $B$ is the rate at which $C$ and $\tau$ degenerate as the distance between $B$ and the boundary 
of $\Omega$ diminishes. We refer the reader to \cite[Theorem 1.1]{CW20} for the details.

\subsection{Content and structure of the article}
In this article we propose a stabilized unfitted FEM for the numerical solution of (\ref{eq:PDE_bulk})-(\ref{eq:measurements}). 
To cope with the fact that the problem merely enjoys conditional stability, see Corollary~\ref{cor:conditional_stabiliy_estimate}, we use the stabilization framework introduced in \cite{B13}, which has been applied successfully to several different problems, see e.g.\ \cite{BNO19,BNO20,BNO22,BP22,BDE21}, using geometrically \textit{fitted} discretizations. 
It turns out that some additional stabilization terms on the interface are necessary 
to allow for the use of an \textit{unfitted} discretization. \par 
Unfitted methods are usually based on an implicit description of the geometry, e.g.\ by levelset functions. Robust and accurate numerical integration on implicitly defined domains is challenging, which renders achieving high-order geometric accuracy with unfitted methods a difficult endeavor. 
Here, we adopt an approach developed in \cite{L_CMAME_2016,LR17,LR18} based on an isoparametric mapping of the background mesh. 
 In this framework the exact interface $\Gamma$ is represented numerically by an interface $\Gamma_h$ such that $\dist(\Gamma, \Gamma_h) \leq C h^{q+1}$ holds for a parameter $q \in \mathbb{N}$ which controls the geometric accuracy and a constant $C >0 $ independent of the mesh width $h$.
In contrast to well-posed problems, we find that the geometric error (stemming from $\Gamma \neq \Gamma_h$) exerts an additional destabilizing effect which has to be countered by a suitable regularization term, see e.g.\ Figure~\ref{fig:non-convex-diffusion-Stab} and Figure~\ref{fig:convex-relerr-exact-data}.  \par 
As our main result (see Theorem~\ref{thm:L2B-error-estimate}) we show that the following error estimate for the extension of the solution given on the data domain $\omega$ to the target domain $B$ holds:
\[
 \norm{ u - u_h \circ \Phi_h^{-1}  }_{ B } \leq C  h^{\tau q} \left(  \norm{u}_{q} + h^{-q} \delta  \right).
\]
Here, $u_h$ is the numerical solution in an unfitted finite element space of order $p \geq q \geq 1$ and $\Phi_h$ is a continuous and piecewise smooth transformation mapping the exact to the approximate geometry. 
The term $\delta $ represent a data perturbation, $\tau \in (0,1)$ is the H\"older exponent from 
Corollary~\ref{cor:conditional_stabiliy_estimate} and $\norm{u}_{q}$ some Sobolev norm of the exact solution.
Comparing our theoretical result with \cite{BNO19,BNO20,BNO22,BP22}, we conclude that (granted suitable stabilization is added) unfitted methods allow to solve unique continuation problems as good as their fitted counterparts. This is also corroborated by a numerical comparsion with fitted methods presented in Section~\ref{ssection:fitted-vs-unfitted}. \par
We summarize the three main contributions of this article:
\begin{itemize}
\item We solve a unique continuation problem which involves an \textit{interface}.
\item We use an \textit{unfitted} method to discretize an \textit{ill-posed} problem and 
provide a fairly complete error analysis complemented by numerical experiments.
\item In particular, our analysis allows arbitrary polynomial orders and includes \textit{geometric errors} 
based on the technique from \cite{L_CMAME_2016,LR17,LR18}.  
\end{itemize}
Let us also take the opportunity to mention open problems that are beyond the scope of this article:
\begin{itemize}
\item In reference \cite{BNO19} unique continuation for the constant coefficient Helmholtz equation has been considered. 
Under a certain convexity condition on the geometry of data and target domain conditional stability estimates similar to Corollary~\ref{cor:conditional_stabiliy_estimate} have been 
derived which are robust in the wavenumber, see e.g.\ \cite[Lemma 2]{BNO19}. It is not clear whether such results can be established for 
		the interface problem considered here. As a result, we cannot characterize the dependence of our final Theorem~\ref{thm:L2B-error-estimate} on the geometrical configuration explicitly. However, we point out that all the results preceding Theorem~\ref{thm:L2B-error-estimate} are independent of the stability properties of the continuous problem (in particilar of the geometric configuration) since Corollary~\ref{cor:conditional_stabiliy_estimate} is only applied to establish the final result.
\item The error estimate in Theorem~\ref{thm:L2B-error-estimate} is not robust in the contrast of the coefficients $\mu_i$ and $\rho_i$. 
This stems on the one hand from the fact that the conditional stability estimate in Corollary~\ref{cor:conditional_stabiliy_estimate} is not robust in the coefficients either 
		and on the other hand from our decision not to keep track of the dependence of the constants on bounds of the coefficients in the error analysis preceding Theorem~\ref{thm:L2B-error-estimate}. Actually we expect that even for the pure Poisson problem (i.e.\ $\rho_i = 0$) it is unlikely that error estimates that are robust in the diffusion coefficients can be obtained in the ill-posed setting considered here. This is inferred from the analysis for the well-posed case performed in reference \cite{BGSS17} which showed that 
achieving robustness requires to handle a certain flux related quantity over which we lack control in the ill-posed setting.
\end{itemize}
The remainder of this article is structured as follows. 
In Section~\ref{section:IF-iso} we explain how the geometry is resolved using the isoparametric mapping technique from \cite{L_CMAME_2016,LR17}. 
Some additional results for this technique are derived in this article which may be of independent interest.
For example, we derive useful relations between derivatives on the exact and approximate interface and show how the isoparametric mapping can be combined with a Galerkin-least squares stabilization. These results could be of interest in various settings, e.g.\ for convection-dominated problems, fluid problems involving interfaces or coupled bulk-surface problems. In Section~\ref{section:iso-FEM} an unfitted isoparametric FEM for the problem (\ref{eq:PDE_bulk})-(\ref{eq:measurements}) is introduced whose numerical analysis is carried out in Section~\ref{section:error-analysis}.  
Numerical experiments are presented in Section~\ref{section:numexp}. We conclude in Section~\ref{section:conclusion}.

\section{Resolving the interface by an isoparametric mapping}\label{section:IF-iso}
This section deals with the geometrical approximation of the interface.
In Subsection~\ref{ssection:exact-geom}-\ref{ssection:iso-map} three different levels
of geometric accuracy will be introduced. 
We start in Subsection~\ref{ssection:exact-geom} with the exact geometry, 
proceed in Subsection~\ref{ssection:p1-geom} with the simplest approximation by a piecewise linear reference configuration and finish in Subsection~\ref{ssection:iso-map} with the higher-order accurate approach from \cite{L_CMAME_2016,LR17} based on an isoparametric mapping.
In Subsection~\ref{ssection:iso-basic-properties} and Subsection~\ref{ssection:geom-err-iso} the basic properties of this mapping are recalled and some further extensions are derived.
Please note that we have outsourced the proofs of this section into Appendix~\ref{B:geom}.

\subsection{The exact geometry}\label{ssection:exact-geom}

It is very common in unfitted FEMs to assume that the interface can be described by a smooth levelset function $\phi$ such that $\Gamma := \{ x \in \Omega \mid \lset(x) = 0  \}$. 
The corresponding subdomains are then given by $\Omega_{1} := \{  \lset < 0 \}$ and $ \Omega_{2} := \{  \lset > 0 \}$. 
We introduce operators for restricting to these subdomains as follows:
\begin{equation}\label{eq:Restrictions-phys}
(R_1 u)(x) :=
\left\{ \begin{array}{cc} u(x) &  x \in  \Omega_1,\\
 0 &   \text{ else, }  \end{array}\right.
\quad 
(R_2 u)(x) :=
\left\{ \begin{array}{cc} u(x) &  x \in  \Omega_2,\\
 0 &   \text{ else. }  \end{array}\right.
\end{equation}
Conversely, we need to be able to extend a function defined merely on a subdomain smoothly to all of $\Omega$. 
To this end, we utilize that for Lipschitz domains, see \cite[Theorem 5, Chapter VI]{S70}, a linear and continuous Sobolev extension operator 
\begin{equation}\label{eq:def-Sobolev-Extension}
\Ext_i:W^{m,p}(\Omega_i) \rightarrow W^{m,p}(\mathbb{R}^d), 
\quad \text{such that } \Ext_i |_{\Omega_i} = \ID,
\; i =1,2,
\end{equation}
exists for all $m \in \mathbb{N}_0$ and $1 \leq p \leq \infty$. 
Here, $W^{m,p}(\Omega_i)$ are the standard Sobolev spaces which we denote by $H^m(\Omega_i) $ when $p=2$ as usual. 
For $u \in W^{m,p}(\BrokenOmega) $ it is convenient to use the notation $\Ext u = ( \Ext u_1  , \Ext u_2 )$. \\ 
Further, we will have to consider derivatives along and normal to the interface. 
As in the introduction let $\normal_{\Gamma}$ be the unit normal vector on $\Gamma$ pointing from $\Omega_1$ into $\Omega_2$. 
Based on $\normal_{\Gamma}$ we define the tangential projection $\Ptan{\Gamma} := I - \normal_{\Gamma} (\normal_{\Gamma})^T$ and surface gradient $\nabla_{\Gamma} := \Ptan{\Gamma} \nabla$. 
\subsection{The piecewise linear reference geometry}\label{ssection:p1-geom}
Let us first introduce a  quasi-uniform and simplicial triangulation $\mesh$ of $\Omega$, which is assumed to be independent of $\Gamma$. 
We use the notation $\lesssim$ (and $\sim$ for $\lesssim$ and $\gtrsim$) to denote that an inequality 
holds with a constant independent of $h$ and how the interface intersects the triangulation $\mesh$.
The constant may however depend on the coefficients of the differential operator $\calL$ in (\ref{eq:PDE_bulk}).
In this work we would like to focus attention on how the geometrical resolution of the 
interface influences the reconstruction. To eliminate other sources of geometrical 
errors we assume (as in the sketch given in Figure~\ref{fig:problem-overview-unfitted}):
\begin{assum}\label{assum:mesh-fits-Omega-omega}
Let $\Omega$ and $\omega$ be polygonal domains which are exactly fitted by $\mesh$. 
\end{assum}
We denote by $\FES{p}{}$ the standard $H^1$-conforming finite element space on $\mesh$ based on piecewise polynomials of order $p$ and set $\FES{p}{0} := \{ v_h \in  \FES{p}{} \mid v_h|_{\partial \Omega} = 0  \}$. 
Let $\hat{\lset}$ be the piecewise linear nodal interpolation of $\lset$ into $\FES{1}{}$.
Based on $\hat{\lset}$ we define the piecewise linear reference interface $\GammaLin := \{  x \in \Omega \mid \hat{\lset}(x) = 0  \}$, respectively the subdomains $\OmegaLin_{1} := \{  \hat{\lset} < 0 \}$ and $ \OmegaLin_{2} := \{  \hat{\lset} > 0 \}$. 
An illustration of some of these geometrical quantities is given in Figure~\ref{fig:Stab-sketch-mod}. 
Further, the intersection of an element with $\GammaLin$ is denoted by $\Gamma_{\el} := \el \cap \GammaLin $.  We combine all the elements cut by the interface into the set $\Cutel :=  \{ \el \in \mesh \mid \Gamma_{\el} \neq \emptyset \}  $ with corresponding domain $\Cutdom := \{ x \in T \mid T \in \Cutel \}$.\\ 
We now turn our attention to the elements in the bulk.  
Let $ \el_i := \el \cap \OmegaLin_{i}  $ denote the portion of the element in $\OmegaLin_{i} $ 
and define the active mesh $\mesh^{i} := \{ \el \in \mesh \mid \el_i \neq \emptyset \} $ and corresponding active domain $\ActiveOmega_i :=  \{ x \in \el : \el \in \mesh^{i}  \}$.  
As for the physical domains we define corresponding restriction operators which give rise to the so called Cut-FEM space:
\begin{equation}\label{eq:Cutfes}
(R_i^+ u)(x) :=
\left\{ \begin{array}{cc} u(x) &  x \in  \Omega_i^+,\\
 0 &   \text{ else, }  \end{array}\right. 
 \qquad 
\FES{p}{\Gamma} :=  R_1^+ \FES{p}{} \oplus  R_2^+ \FES{p}{}. 
\end{equation}
Elements $v_h \in \FES{p}{\Gamma} $ have the form $v_h = (v_{h,1},v_{h,2})$ with  $v_{h,i} \in R_{i}^{+} \FES{p}{} $.
Note that the introduced description of the geometry is only second order accurate, i.e.\ $\dist(\Gamma, \Gamma_h)  \lesssim \mathcal{O}(h^{2})$, which limits the overall accuracy of derived numerical method unless some form of correction is applied, see e.g.\ \cite{BHL18,BBCL18}.
\subsection{The isoparametric mapping}\label{ssection:iso-map}
The higher-order description of the geometry from \cite{L_CMAME_2016,LR17} is  based on an interpolant $\lset_h \in \FES{q}{}, q \in \mathbb{N}$ of the levelset function in a higher-order finite element space. To circumvent that for $q \geq 2$ the geometry description given by $\lset_h$ is only implicit, a mapping $\Theta_h \in [\FES{q}{} ]^d$ is constructed based on $\lset_h$ and $\hat{\lset}$ which maps the piecewise linear reference geometry to a higher order accurate approximation of the exact geometry, i.e.\ we have 
\[
\hat{\lset} \approx \lset_h \circ \Theta_h \; \text{and for } \Gamma_h := \Theta_h( \GammaLin) \text{ it holds: }  \dist(\Gamma, \Gamma_h) \lesssim \mathcal{O}(h^{q+1}).
\]
This means that $\Gamma_h$ defined as the image of the piecewise linear reference interface under the mapping $\Theta_h$ and  $\Omega_{i,h} := \Theta_h( \OmegaLin_{i} )$ approximate the exact interface and subdomains with order $\mathcal{O}(h^{q+1})$ and thanks to  
\begin{equation}\label{eq:integral-trafo}
\int\limits_{\Omega_{i,h}}{f  \; dx} = \int\limits_{ \Omega^{\text{lin}}_{i}}{ f \circ \Theta_{h} \abs{ \det{ \left( D \Theta_{h}  \right) }} \; dy}
\end{equation}
it suffices to compute integrals on the piecewise linear reference configuration which is well-understood, see e.g.\ \cite[Section 5]{BC15} for details. 
We also define $\Omega_{i,h}^+ := \Theta_h( \Omega_i^+ )$ and remark that the regularity $\lset \in C^{q+2}(U)$ where $U$ is a neighborhood of $\Gamma$ 
is assumed to guarantee the approximation properties of the mapping $\Theta_h$, see \cite[Section 2.1]{LR17}. \\ 
For the analysis (not the implementation) we will need another mapping $\Psi$ which maps the piecewise linear reference geometry back to the exact geometry, in particular \ $\Psi(\GammaLin) = \Gamma$ holds. The transformation $\Theta_h$ is invertible for sufficiently small $h$ (see \cite[Section 3.5]{LR17}) and we can define $\Phi_h = \Psi \circ \Theta_h^{-1}$, which then fulfills  $\Phi_h(\Gamma_h) = \Gamma$ and has the smoothness property $\Phi_h \in [C(\Omega)]^d \cap [C^{q+1}( \Theta_h(\mesh))]^d$, where $C^{q+1}( \Theta_h(\mesh)) := \oplus_{\el \in \mesh} C^{q+1}( \Theta_h(\el)) $, see \cite{LR18}. 
For a detailed description on how the mappings $\Theta_h$ and $\Psi$ are constructed we refer to \cite[Section 3]{LR17}. Here, we only recall the fact that both mappings are small perturbations of the identity whose action is localized in the vicinity of the interface.
In particular, as $\omega$ is compactly contained in one of the subdomains it follows that for sufficiently small $h$ it holds
\begin{equation}\label{eq:omega-trafo-id}
\Phi_{h} |_{\omega} = \ID.
\end{equation}
Let us close this subsection by defining some geometrical quantities on the discrete interface $\Gamma_h$. 
Let $\normal_{\Gamma_h}$ denote the unit normal vector on $\Gamma_h$. 
We also define the discrete tangential projection $\Ptan{\Gamma_h} := I - \normal_{\Gamma_h} (\normal_{\Gamma_h})^T$ and surface gradient $\nabla_{\Gamma_h} := \Ptan{\Gamma_h} \nabla$.  

\subsection{Basic properties of the mappings}\label{ssection:iso-basic-properties}
As noted above, the mesh transformation is essentially a small perturbation of the identity. A quantitative version of this statement has been given in \cite[Lemma 5.5]{LR17} which is recalled below. 
\begin{lemm}\label{lem:Trafo-Grad-bounds}
Let $\dPhi := D\Phi_h$. 
The following holds 
\begin{align}
\norm{\Theta_h - \Psi }_{\infty,\Omega} + h \norm{D (\Theta_h - \Psi) }_{\infty,\Omega} &\lesssim h^{q+1}, \label{eq:Bound-Theta-Psi-Grad} \\ 
\norm{\Phi_h -\ID}_{\infty,\Omega} + h \norm{ \dPhi - I }_{\infty,\Omega} & \lesssim h^{q+1}. \label{eq:Bound-Phi-Grad}  
\end{align}
\end{lemm}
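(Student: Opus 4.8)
The plan is to reduce the second estimate \eqref{eq:Bound-Phi-Grad} to the first one \eqref{eq:Bound-Theta-Psi-Grad}, and to obtain the latter directly from the construction of the two mappings together with the interpolation error of $\lset_h$.

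First I would reduce \eqref{eq:Bound-Phi-Grad} to \eqref{eq:Bound-Theta-Psi-Grad}. Since $\Phi_h = \Psi \circ \Theta_h^{-1}$ and $\Theta_h \circ \Theta_h^{-1} = \ID$, writing $y = \Theta_h(x)$ gives the pointwise identity
\[
\Phi_h - \ID = (\Psi - \Theta_h) \circ \Theta_h^{-1}.
\]
Because $\Theta_h^{-1}$ is a bijection of $\Omega$ for $h$ small, taking suprema yields $\norm{\Phi_h - \ID}_{\infty,\Omega} = \norm{\Psi - \Theta_h}_{\infty,\Omega} \lesssim h^{q+1}$ by \eqref{eq:Bound-Theta-Psi-Grad}. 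Differentiating by the chain rule,
\[
D(\Phi_h - \ID) = \big( D(\Psi - \Theta_h) \circ \Theta_h^{-1} \big)\, D(\Theta_h^{-1}),
\]
and using that $\Theta_h$ is a perturbation of the identity so that $\norm{D(\Theta_h^{-1})}_{\infty,\Omega} \lesssim 1$, the gradient part of \eqref{eq:Bound-Theta-Psi-Grad} gives $\norm{\dPhi - I}_{\infty,\Omega} = \norm{D(\Phi_h - \ID)}_{\infty,\Omega} \lesssim h^{q}$, which is \eqref{eq:Bound-Phi-Grad}.

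It remains to prove \eqref{eq:Bound-Theta-Psi-Grad}. Both mappings transport the level sets of the piecewise linear $\hat{\lset}$ onto those of a target level set function along a common search direction: $\Psi$ uses the exact $\lset$, while $\Theta_h$ uses the higher-order interpolant $\lset_h$. Up to the projection of $\Theta_h$ into $[\FES{q}{}]^d$ (which contributes an error of the same order) they satisfy the implicit relations $\lset(\Psi(x)) = \hat{\lset}(x)$ and $\lset_h(\Theta_h(x)) = \hat{\lset}(x)$. Subtracting and inserting $\lset(\Theta_h(x))$ I obtain
\[
\lset(\Theta_h(x)) - \lset(\Psi(x)) = (\lset - \lset_h)(\Theta_h(x)).
\]
The right-hand side is bounded by $\norm{\lset - \lset_h}_{\infty} \lesssim h^{q+1}$, the interpolation error for $\lset \in C^{q+2}$ near $\Gamma$. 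On the left-hand side a mean-value argument together with the non-degeneracy $\abs{\nabla \lset} \gtrsim 1$ near $\Gamma$, and the fact that the displacements are aligned with the controlled search direction, converts this level-set difference into the geometric one, giving $\norm{\Theta_h - \Psi}_{\infty,\Omega} \lesssim h^{q+1}$.

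For the gradient term I would differentiate the implicit relation above. This introduces $\nabla(\lset - \lset_h)$, for which interpolation only yields $\norm{\nabla(\lset - \lset_h)}_{\infty} \lesssim h^{q}$, one power weaker, which is precisely the origin of the factor $h$ in front of the derivative term. The main obstacle is the bookkeeping in the differentiated identity: one must track the dependence of the search direction and of the Jacobians $D\Psi$ and $D\Theta_h$ on $x$ and control them uniformly in $h$, using that both mappings are $\mathcal{O}(h^{q+1})$-perturbations of the identity supported near $\Gamma$. Once these Jacobians are shown to be uniformly bounded and invertible, solving the differentiated relation for $D(\Theta_h - \Psi)$ and inserting the two interpolation bounds delivers $\norm{D(\Theta_h - \Psi)}_{\infty,\Omega} \lesssim h^{q}$, completing \eqref{eq:Bound-Theta-Psi-Grad}.
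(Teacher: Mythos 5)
The paper never proves this lemma: it is recalled verbatim from \cite[Lemma 5.5]{LR17} (the surrounding text says as much), so there is no internal proof to compare against, and your sketch is in effect a compressed reconstruction of the cited argument. Its architecture matches that source: in \cite{LR17} the bound on $\Psi - \Theta_h$ is obtained first, from the implicit level-set relations and the interpolation errors $\norm{\lset - \lset_h}_{\infty} \lesssim h^{q+1}$ and $\norm{\nabla(\lset - \lset_h)}_{\infty} \lesssim h^{q}$ (this is where the lost power of $h$ on the derivative term comes from, exactly as you say), and the bound on $\Phi_h - \ID$ is then deduced by your identity $\Phi_h - \ID = (\Psi - \Theta_h)\circ \Theta_h^{-1}$ together with the chain rule and uniform boundedness of $D\Theta_h^{-1}$; the same reduction reappears in this paper's own Appendix~\ref{B:geom}, in the proof of Lemma~\ref{lem:Phi-HigherOrderBounds}(b). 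So the overall route is correct.

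Two steps in your sketch are looser than the actual argument and worth naming. First, the differentiated implicit relation is a \emph{scalar} identity, so after taking gradients you have $d$ scalar equations and cannot literally ``solve'' them for the $d\times d$ matrix $D(\Theta_h - \Psi)$. What rescues the argument is the structural form of the mappings, $\Psi = \ID + d\,G$ and $\Theta_h \approx \ID + d_h\,G_h$ with a common approximate search direction: then $D(\Theta_h - \Psi)$ decomposes into terms like $\nabla(d_h - d)\,G^{T}$, $(d_h - d)\,DG$ and $d\,D(G_h - G)$, the differentiated scalar relation is used only to bound $\nabla(d_h - d)$, and the direction errors are estimated separately (with $\abs{d} \lesssim h^2$ and $\norm{G - G_h}$ of order $h^q$, the transversal contributions are higher order — this also makes your mean-value/alignment step for the sup-norm bound rigorous, since the mean value theorem alone only controls the component of $\Theta_h - \Psi$ along $\nabla\lset$). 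Second, the relation $\lset_h(\Theta_h(x)) = \hat{\lset}(x)$ holds (and only approximately, as you note via the projection into $[\FES{q}{}]^d$) just on the cut elements; on $\CutdomExt \setminus \Cutdom$ the mapping is defined by the extension operator $\ExtMeshTrafo^{\partial \Cutdom}$ and the implicit relation is unavailable, so there the $W^{1,\infty}$ bounds come from the stability of that extension, cf.\ \cite[Theorem 3.11]{LR17} and the analogous treatment of the extension region in the paper's proof of Lemma~\ref{lem:Phi-HigherOrderBounds}(a). Both gaps are fixable bookkeeping rather than flaws in the approach.
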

Here, $\norm{\cdot}_{\infty,M}$ denotes the $L^{\infty}$-norm on $M \subset \mathbb{R}^d$.
From the proof of \cite[Lemma 5.10]{LR17} we also recall the following estimates:   
\begin{equation}\label{eq:DPhi_estimates}
\norm{\dPhi^{-1} -I}_{\infty,\Omega} \lesssim h^q, \quad
\norm{ \det(\dPhi) -1}_{\infty,\Omega} \lesssim h^q, \quad 
\norm{  C_{\Phi_h} }_{\infty,\Omega} \lesssim h^q,
\end{equation}
where $C_{\Phi_h} := (\dPhi^T \dPhi)^{-1} \det(\dPhi) - I $.
Notice also that this implies $ \norm{\det(A^{-1})}_{\infty,\Omega} \lesssim 1$ for $h$ sufficiently small.
We will additionally require bounds for higher derivatives of $\Phi_h$ which were stated in \cite[Remark 5.6]{LR17} without proof.
\begin{lemm}[Higher order bounds for $\Phi_h$]\label{lem:Phi-HigherOrderBounds}
For $l = 2, \cdots, q+1$ it holds that
\begin{enumerate}[label=(\alph*)]
\item $\max_{ T \in \mesh  } \norm{D^l ( \Psi - \Theta_h ) }_{\infty,T} \lesssim h^{q+1-l}  $. 
\item  $\max_{ T \in \mesh  }  \norm{ D^l ( \Phi_h - \ID ) }_{\infty,\Theta_h(T)} = \max_{ T \in \mesh  }  \norm{ D^l \Phi_h  }_{\infty,\Theta_h(T)}  \lesssim  h^{q+1-l}.  $   
\end{enumerate} 
\end{lemm}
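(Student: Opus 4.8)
The plan is to reduce both statements to the first-order bounds of Lemma~\ref{lem:Trafo-Grad-bounds} by combining elementwise interpolation estimates for the smooth map $\Psi$ with inverse inequalities for the finite element map $\Theta_h$, and, for part~(b), by applying the Fa\`a di Bruno formula to a conveniently rewritten composition.

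For part~(a) I would fix $T\in\mesh$ and let $\InterpBackgroundMesh\Psi\in[\FES{q}{}]^d$ denote the degree-$q$ nodal interpolant of $\Psi$ (applied componentwise). Since $\lset\in C^{q+2}$ in a neighbourhood of $\Gamma$, the construction of $\Psi$ in \cite{LR17} yields $\Psi|_T\in C^{q+1}(T)$ with $\norm{D^{q+1}\Psi}_{\infty,T}\lesssim 1$, so the standard elementwise interpolation estimate gives $\norm{D^l(\Psi-\InterpBackgroundMesh\Psi)}_{\infty,T}\lesssim h^{q+1-l}$ for $0\le l\le q+1$. On the other hand $\InterpBackgroundMesh\Psi-\Theta_h$ is a vector of polynomials of degree at most $q$ on $T$, so the inverse inequality $\norm{D^l p}_{\infty,T}\lesssim h^{-l}\norm{p}_{\infty,T}$ applies; combined with $\norm{\InterpBackgroundMesh\Psi-\Theta_h}_{\infty,T}\le\norm{\InterpBackgroundMesh\Psi-\Psi}_{\infty,T}+\norm{\Psi-\Theta_h}_{\infty,T}\lesssim h^{q+1}$ (the last term by Lemma~\ref{lem:Trafo-Grad-bounds}) this produces $\norm{D^l(\InterpBackgroundMesh\Psi-\Theta_h)}_{\infty,T}\lesssim h^{q+1-l}$, and adding the two contributions proves~(a). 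As a by-product, writing $D^l\Theta_h=D^l(\Theta_h-\Psi)+D^l\Psi$ and using~(a) shows $\norm{D^l\Theta_h}_{\infty,T}\lesssim 1$ for all $1\le l\le q+1$.

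For part~(b) the naive route through $\Psi=\Phi_h\circ\Theta_h$ is wasteful, since $D^l\Psi$ is only of size $\mathcal{O}(1)$ and would fail to expose the smallness of $D^l\Phi_h$. Instead I would exploit the cancellation contained in the identity
\[
 \Phi_h-\ID=(\Psi-\Theta_h)\circ\Theta_h^{-1},
\]
which holds because $\Phi_h(y)=\Psi(\Theta_h^{-1}(y))$ while $y=\Theta_h(\Theta_h^{-1}(y))$. As $D^l\ID=0$ for $l\ge2$, it suffices to bound $D^l$ of the right-hand side on $\Theta_h(T)$. Using the invertibility of $\Theta_h$ for small $h$, the inverse function theorem, and the bound $\norm{D^l\Theta_h}_{\infty,T}\lesssim1$ noted above, one first obtains $\norm{D^m\Theta_h^{-1}}_{\infty,\Theta_h(T)}\lesssim1$ for $1\le m\le q+1$. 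Applying Fa\`a di Bruno to $(\Psi-\Theta_h)\circ\Theta_h^{-1}$, every term attached to a partition of $\{1,\dots,l\}$ into $j$ blocks is a product of one factor $D^j(\Psi-\Theta_h)\circ\Theta_h^{-1}$ and $j$ factors $D^{|B|}\Theta_h^{-1}$; estimating the first factor by $\norm{D^j(\Psi-\Theta_h)}_{\infty,T}\lesssim h^{q+1-j}$ (valid for $1\le j\le q+1$ via Lemma~\ref{lem:Trafo-Grad-bounds} and part~(a)) and the remaining factors by $1$, each such term is $\lesssim h^{q+1-j}$. The smallest power of $h$ arises for $j=l$ (the partition into singletons), giving $h^{q+1-l}$, and all other terms are of higher order since $h\le1$; this establishes~(b).

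The main obstacle is the bookkeeping in part~(b): one must recognise that the required smallness of the higher derivatives of $\Phi_h$ has to be extracted from the factor $\Psi-\Theta_h$ through the identity above, rather than from the composition $\Psi=\Phi_h\circ\Theta_h$ which conceals it, and then control the resulting chain-rule expansion. The key auxiliary estimate making this work is the uniform boundedness of the derivatives of $\Theta_h^{-1}$ up to order $q+1$, which itself rests on the bound $\norm{D^l\Theta_h}_{\infty,T}\lesssim1$ obtained as a by-product of part~(a).
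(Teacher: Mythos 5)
Your proposal is correct, and part (b) coincides with the paper's argument: the paper uses exactly the identity $\Phi_h - \ID = (\Psi - \Theta_h)\circ\Theta_h^{-1}$, the uniform bounds $\norm{D^m\Theta_h^{-1}}_{\infty,\Theta_h(T)}\lesssim 1$ for $1\leq m\leq q+1$ (which it imports from \cite[Lemma 2]{L_GUFEMA_2017} rather than re-deriving via the inverse function theorem, as you sketch), and the chain-rule expansion combined with part (a); your Fa\`a di Bruno bookkeeping, with the worst term $j=l$ giving $h^{q+1-l}$, is the precise version of what the paper compresses into ``the claim follows from the product rule.'' For part (a), however, you take a genuinely different route. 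The paper works with the piecewise definition of $\Psi-\Theta_h$ from the construction in \cite{LR17}: on cut elements it invokes the combined estimate $\sum_{r=0}^{q+1} h^r \max_{T\in\Cutel}\norm{D^r(\Psi^{\Gamma}-\Theta_h^{\Gamma})}_{\infty,T}\lesssim h^{q+1}$ (\cite[Lemma 3.7]{LR17}), and on the extension band it propagates this through the derivative bounds for the extension operator $\ExtMeshTrafo^{\partial\Cutdom}$ (\cite[Theorem 3.11]{LR17}); outside that band the difference vanishes. Your argument instead needs only the zeroth-order bound $\norm{\Psi-\Theta_h}_{\infty,\Omega}\lesssim h^{q+1}$ of Lemma~\ref{lem:Trafo-Grad-bounds}, upgraded to higher derivatives by the classical interpolation-plus-inverse-inequality trick applied to $\Psi - \InterpBackgroundMesh\Psi$ and $\InterpBackgroundMesh\Psi-\Theta_h$. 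This is more elementary and avoids the extension-operator machinery entirely, but it shifts the burden onto the hypothesis $\norm{D^{q+1}\Psi}_{\infty,T}\lesssim 1$ uniformly over \emph{all} elements of $\mesh$ --- including those in the extension band $\CutdomExt\setminus\Cutdom$, where $\Psi$ is itself defined through $\ExtMeshTrafo^{\partial\Cutdom}$ --- so verifying it rigorously would lead you back to essentially the same facts from \cite{LR17} that the paper quotes; the paper's decomposition has the advantage of only ever estimating the \emph{difference} $\Psi^{\Gamma}-\Theta_h^{\Gamma}$, for which the smallness is available in all derivative orders at once, while your version buys a shorter, more portable proof once the elementwise regularity of $\Psi$ is granted. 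One small point of care in your by-product step: the bound $\norm{D^l\Theta_h}_{\infty,T}\lesssim 1$ must come, as you do it, from part (a) plus the bounds on $D^l\Psi$, since a naive inverse estimate on $\Theta_h$ alone would only give $h^{1-l}$ --- you handled this correctly.
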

\begin{proof}
Given in Appendix~\ref{B:geom} like the other proofs of this section.
\end{proof}
In the analysis we will frequently have to transform between exact and discrete interface.
To this end, it is important to understand how the respective normal vectors and associated projections are related.
We recall from \cite[eq. (A.20)]{LR17} that the transformation $\Phi_h$ induces the relations 
\begin{equation}\label{eq:normal-trafo-Phi}
\normal_{\Gamma} \circ \Phi_h = \frac{ \dPhi^{-T} \normal_{\Gamma_h}  }{ \norm{\dPhi^{-T} \normal_{\Gamma_h}  }_2    }, \quad 
\normal_{\Gamma_h} \circ \Phi_h^{-1} =  \frac{ \dPhi^{T} \normal_{\Gamma}  }{ \norm{\dPhi^{T} \normal_{\Gamma}  }_2    }.
\end{equation}
Using these relations and (\ref{eq:Bound-Phi-Grad}) it is easy to show:
\begin{lemm}[Normal and tangential projection]\label{lem:normal_tan_proj}
For $h$ sufficiently small the following estimates hold uniformly on $\Gamma$, respectively $\Gamma_h$.
\begin{enumerate}[label=(\alph*)]
\item
On $\Gamma$ we have 
\[
\norm{ \dPhi^{T} \normal_{\Gamma} }_{2} \gtrsim 1 
\text{ and } \abs{ \norm{ \dPhi^{T} \normal_{\Gamma} }_{2 } - 1 } \lesssim h^q, 
\]
and on  $\Gamma_h$: 
\[
\norm{ \dPhi^{-T} \normal_{\Gamma_h} }_{2} \gtrsim 1 
\text{ and } \abs{ \norm{ \dPhi^{-T} \normal_{\Gamma_h} }_{2 } - 1 } \lesssim h^q. 
\]
\item The perturbation of the exact normal vector is bounded by: 
\[
\norm{ \normal_{\Gamma}  - \normal_{\Gamma_h} \circ \Phi_h^{-1} }_2  \lesssim h^q \text{ on } \Gamma,  \quad	\norm{ \normal_{\Gamma} \circ \Phi_h - \normal_{\Gamma_h} }_2  \lesssim h^q \text{ on } \Gamma_h. \] 
\item As a result, the tangential projections satisfy: 
\[  \norm{ \Ptan{\Gamma_h} \circ \Phi_h^{-1} - \Ptan{ \Gamma } }_{2} \lesssim h^q,  
\text{ on } \Gamma, \quad
\norm{ \Ptan{\Gamma} \circ \Phi_h - \Ptan{ \Gamma_h } }_{2} \lesssim h^q,  \text{ on } \Gamma_h. 
\]
\end{enumerate}
\end{lemm}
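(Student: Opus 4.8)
The plan is to prove the three parts of Lemma~\ref{lem:normal_tan_proj} in order, bootstrapping each from the previous, using only the normal transformation identities~\eqref{eq:normal-trafo-Phi} and the first-order perturbation bound~\eqref{eq:Bound-Phi-Grad} together with its consequences in~\eqref{eq:DPhi_estimates}. The key structural observation is that all three statements are about how the unit normal is distorted under a map $\Phi_h$ that is $\mathcal{O}(h^{q+1})$-close to the identity with gradient $\dPhi$ satisfying $\norm{\dPhi - I}_{\infty,\Omega} \lesssim h^q$. Since $\normal_\Gamma$ and $\normal_{\Gamma_h}$ are unit vectors, every estimate will ultimately reduce to controlling how $\dPhi^T$ (or $\dPhi^{-T}$) acts on a unit vector and how the subsequent renormalization in~\eqref{eq:normal-trafo-Phi} behaves.

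For part (a), I would start from $\dPhi^T \normal_\Gamma = (\dPhi^T - I)\normal_\Gamma + \normal_\Gamma$. Since $\normal_\Gamma$ is a unit vector, the reverse triangle inequality gives $\abs{ \norm{\dPhi^T \normal_\Gamma}_2 - 1 } = \abs{ \norm{\dPhi^T \normal_\Gamma}_2 - \norm{\normal_\Gamma}_2 } \leq \norm{(\dPhi^T - I)\normal_\Gamma}_2 \leq \norm{\dPhi - I}_{\infty,\Omega} \lesssim h^q$, invoking~\eqref{eq:Bound-Phi-Grad}. The lower bound $\norm{\dPhi^T \normal_\Gamma}_2 \gtrsim 1$ then follows for $h$ small enough since the quantity is within $\mathcal{O}(h^q)$ of $1$. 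The corresponding bounds on $\Gamma_h$ for $\dPhi^{-T}\normal_{\Gamma_h}$ are handled identically, replacing $\dPhi - I$ by $\dPhi^{-1} - I$ and using the first estimate in~\eqref{eq:DPhi_estimates}.

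For part (b), I would use the second identity in~\eqref{eq:normal-trafo-Phi}, namely $\normal_{\Gamma_h}\circ \Phi_h^{-1} = \dPhi^T \normal_\Gamma / \norm{\dPhi^T \normal_\Gamma}_2$, and write the difference on $\Gamma$ as
\[
\normal_\Gamma - \frac{\dPhi^T \normal_\Gamma}{\norm{\dPhi^T \normal_\Gamma}_2}
= \frac{ \norm{\dPhi^T \normal_\Gamma}_2 \, \normal_\Gamma - \dPhi^T \normal_\Gamma }{\norm{\dPhi^T \normal_\Gamma}_2}.
\]
The numerator splits as $\bigl(\norm{\dPhi^T \normal_\Gamma}_2 - 1\bigr)\normal_\Gamma - (\dPhi^T - I)\normal_\Gamma$; the first term is $\mathcal{O}(h^q)$ by part (a) and the second is $\mathcal{O}(h^q)$ by~\eqref{eq:Bound-Phi-Grad}, while the denominator is bounded below by part (a). The analogous bound on $\Gamma_h$ uses the first identity in~\eqref{eq:normal-trafo-Phi} in the same way.

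For part (c), I would expand the projection difference using $\Ptan{\Gamma} = I - \normal_\Gamma \normal_\Gamma^T$ and $\Ptan{\Gamma_h}\circ\Phi_h^{-1} = I - (\normal_{\Gamma_h}\circ\Phi_h^{-1})(\normal_{\Gamma_h}\circ\Phi_h^{-1})^T$, so that the difference is the rank-at-most-two term $\normal_\Gamma\normal_\Gamma^T - (\normal_{\Gamma_h}\circ\Phi_h^{-1})(\normal_{\Gamma_h}\circ\Phi_h^{-1})^T$. Writing $m := \normal_{\Gamma_h}\circ\Phi_h^{-1}$ and adding and subtracting $\normal_\Gamma m^T$, this equals $\normal_\Gamma(\normal_\Gamma - m)^T + (\normal_\Gamma - m)m^T$, whose norm is bounded by $(\norm{\normal_\Gamma}_2 + \norm{m}_2)\norm{\normal_\Gamma - m}_2$. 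Since both vectors are unit vectors, this is $\lesssim \norm{\normal_\Gamma - m}_2 \lesssim h^q$ by part (b), and the $\Gamma_h$ version is symmetric. The only mild subtlety—the main obstacle, such as it is—is being careful that each estimate holds \emph{uniformly} on $\Gamma$ (resp.\ $\Gamma_h$): this is ensured because the bounds~\eqref{eq:Bound-Phi-Grad} and~\eqref{eq:DPhi_estimates} are stated in the $L^\infty$-norm over all of $\Omega$, so the constants do not depend on the evaluation point, and the lower bound from part (a) prevents the renormalization from amplifying errors.
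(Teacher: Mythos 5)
Your proposal is correct and follows essentially the same route as the paper's own proof: part (a) via the reverse triangle inequality together with the $L^\infty$ bounds \eqref{eq:Bound-Phi-Grad} and \eqref{eq:DPhi_estimates}, part (b) via the normal transformation identities \eqref{eq:normal-trafo-Phi} with the numerator split $\bigl(\norm{\dPhi^T \normal_\Gamma}_2 - 1\bigr)\normal_\Gamma - (\dPhi^T - I)\normal_\Gamma$, and part (c) via the same rank-two add-and-subtract decomposition of the projector difference. Your closing remark on uniformity (the perturbation bounds being global $L^\infty(\Omega)$ estimates) matches the paper's only extra care on this point, namely localizing to an element $\Theta_h(T)$ where $\dPhi$ is continuous.
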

We remark that results similar to Lemma~\ref{lem:normal_tan_proj} already appear in \cite[Lemma 3.3]{GLR18} and \cite[Lemma 17]{YL_PHD_2023}.

\subsection{Geometry errors for mapped functions}\label{ssection:geom-err-iso}
Since $\Phi_h$ maps the discrete to the exact geometry, precomposition with this map allows to pull back functions defined on the exact geometry to the discrete geometry and vice versa. 
The following two lemmas discuss how various norms of these functions are related.
\begin{lemm}[norm equivalence]\label{lem:norm-equiv}
Let $\tilde{v} := v \circ \Phi_h$ for $v \in H^1( \Omega_{i} ) \cap H^2(\Psi(\mesh^{i} )) $. Then it holds that 
\begin{enumerate}[label=(\alph*)]
\item $\norm{ \tilde{v} }_{  \Omega_{i,h}   } \sim \norm{ v }_{  \Omega_{i}  } $ and 
$\norm{ \nabla \tilde{v} }_{  \Omega_{i,h}   } \sim \norm{ \nabla v }_{  \Omega_{i}  } $. 
\item For the second derivatives we have for $\nu,\mu \in \{1,\ldots,d\}$: 
\[
\sum\limits_{ \el \in \mesh^{i} }  \int\limits_{ \Theta_h(T) } \abs{ \partial_{y_{\nu}} \partial_{y_{\mu}} \tilde{v}  }^2 \; \dY
\lesssim \sum\limits_{ \abs{\alpha} \leq 2  } \sum\limits_{ \el \in \mesh^{i} }  \int\limits_{ \Psi(T) } \abs{ D_x^{\alpha} v  }^2 \; \dX. 
\]
\end{enumerate}
\end{lemm}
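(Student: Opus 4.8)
The plan is to exploit that $\Phi_h$ restricts to a piecewise smooth diffeomorphism carrying the discrete subdomain $\Omega_{i,h} = \Theta_h(\OmegaLin_i)$ onto the exact subdomain $\Omega_i = \Psi(\OmegaLin_i)$, and, more precisely, each mapped cell $\Theta_h(T)$ onto $\Psi(T)$ (since $\Phi_h\circ\Theta_h = \Psi$). Hence everything reduces to a change of variables $x=\Phi_h(y)$ carried out element by element. The only quantitative inputs are the perturbation bounds already available: $\det(\dPhi)\sim 1$ and $\norm{\dPhi-I}_{\infty,\Omega}\lesssim h^q$ from (\ref{eq:DPhi_estimates})/(\ref{eq:Bound-Phi-Grad}) (so that $\dPhi,\dPhi^T$ have singular values uniformly comparable to $1$ and $|\dPhi^T w|\sim|w|$ for every vector $w$), together with the higher-order bound $\norm{D^2\Phi_h}_{\infty,\Theta_h(T)}\lesssim h^{q-1}\lesssim 1$ from Lemma~\ref{lem:Phi-HigherOrderBounds}(b).

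For part (a) I would first treat the $L^2$-norm: writing $\norm{\tilde v}_{\Omega_{i,h}}^2 = \int_{\Omega_{i,h}} |v\circ\Phi_h|^2\,\dY$ and substituting $x=\Phi_h(y)$ produces $\int_{\Omega_i}|v|^2$ weighted by $|\det\dPhi|^{-1}\sim 1$, giving $\norm{\tilde v}_{\Omega_{i,h}}\sim\norm{v}_{\Omega_i}$. For the gradient I would apply the chain rule $\nabla\tilde v = \dPhi^T\,(\nabla v)\circ\Phi_h$ pointwise (a.e., cellwise), use $|\dPhi^T w|\sim|w|$ to get $|\nabla\tilde v(y)|\sim|(\nabla v)(\Phi_h(y))|$, and conclude by the same change of variables. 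Only $v\in H^1(\Omega_i)$ is needed here, precisely so that $\nabla v\in L^2(\Omega_i)$.

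For part (b), the main computation is the second-order chain rule on a single cell: differentiating $\tilde v = v\circ\Phi_h$ twice yields the two familiar families of terms, $\sum_{k,l}(\partial_{x_k}\partial_{x_l}v)\circ\Phi_h\;\partial_{y_\nu}(\Phi_h)_l\,\partial_{y_\mu}(\Phi_h)_k$ and $\sum_k(\partial_{x_k}v)\circ\Phi_h\;\partial_{y_\nu}\partial_{y_\mu}(\Phi_h)_k$. In the first family the factors $\partial_y(\Phi_h)_k$ are bounded by $\norm{\dPhi}_{\infty}\lesssim 1$, while in the second the factors $\partial_{y_\nu}\partial_{y_\mu}(\Phi_h)_k$ are controlled by $\norm{D^2\Phi_h}_{\infty,\Theta_h(T)}\lesssim 1$. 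Squaring and using the triangle inequality (the number of summands being a fixed constant depending only on $d$), then integrating over $\Theta_h(T)$, gives $\int_{\Theta_h(T)}|\partial_{y_\nu}\partial_{y_\mu}\tilde v|^2\,\dY \lesssim \int_{\Theta_h(T)}\big(\,|(D^2 v)\circ\Phi_h|^2 + |(\nabla v)\circ\Phi_h|^2\,\big)\,\dY$; the substitution $x=\Phi_h(y)$ (again with $|\det\dPhi|\sim 1$) turns the right-hand side into $\int_{\Psi(T)}(|D^2 v|^2 + |\nabla v|^2)\,\dX$, and summing over $T\in\mesh^{i}$ delivers the claim. The assumption $v\in H^2(\Psi(\mesh^{i}))$ is exactly what makes $D^2 v$ square-integrable on each piece $\Psi(T)$.

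The only genuine obstacle is that $\Phi_h$ is globally merely continuous and only piecewise $C^{q+1}$, so none of these manipulations may be performed on $\Omega_{i,h}$ as a whole; they must be done cellwise on each $\Theta_h(T)$, which is precisely why the statement is phrased as a sum of element-wise integrals. I also expect the borderline case $q=1$ to warrant a remark, since there $\norm{D^2\Phi_h}_{\infty}\lesssim h^{q-1}$ is only $O(1)$ rather than decaying; fortunately $O(1)$ suffices, because this factor multiplies only the lower-order term $(\nabla v)\circ\Phi_h$ on the right-hand side, so no sharper bound on the second derivatives of $\Phi_h$ is required.
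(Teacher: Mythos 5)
Your proposal is correct and follows essentially the same route as the paper: part (a) is the standard change-of-variables argument with $\det(\dPhi)\sim 1$ and $\norm{\dPhi - I}_{\infty,\Omega}\lesssim h^q$ (which the paper delegates to the proof of Lemma~5.10 in \cite{LR17}), and part (b) is exactly the paper's computation, namely the elementwise second-order chain rule producing the $(D^2 v)\circ\Phi_h$ and $(\nabla v)\circ\Phi_h$ families, with first derivatives of $\Phi_h$ bounded via (\ref{eq:Bound-Phi-Grad})/(\ref{eq:DPhi_estimates}) and second derivatives via Lemma~\ref{lem:Phi-HigherOrderBounds}(b), followed by the transformation $\Theta_h(T)=\Phi_h^{-1}(\Psi(T))$. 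The only cosmetic difference is order of operations (you bound pointwise on $\Theta_h(T)$ before changing variables, the paper transforms the integral first), and your remarks on cellwise application and on $h^{q-1}=O(1)$ sufficing for $q=1$ are both accurate.
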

While the norms involving the full gradients on $\Gamma$ and $\Gamma_h$ are equivalent, the corresponding statement for the normal and tangential derivatives only holds up to a geometrical error proportional to $h^q$. 
In view of Lemma~\ref{lem:normal_tan_proj} this has to be expected.
\begin{lemm}[Derivatives at the interface]\label{lem:Deriv-IF}
Let $ \tilde{v} := v \circ \Phi_h$ for $v \in H^1(\Gamma)$. Then 
\begin{enumerate}[label=(\alph*)] 
\item We have $\norm{ v}_{\Gamma} \sim \norm{ \tilde{v} }_{\Gamma_h} $ and
 $\norm{\nabla v}_{\Gamma} \sim \norm{ \nabla \tilde{v} }_{\Gamma_h} $.
\item For the normal derivates: 
\[ 
\norm{ \nabla \tilde{v} \cdot  \normal_{ \Gamma_h  }  }_{ \Gamma_h} \lesssim h^q  \norm{ \nabla v }_{\Gamma} + 
\norm{ \nabla v \cdot \normal_{ \Gamma  }  }_{ \Gamma }, 
\quad
\norm{ \nabla v \cdot  \normal_{ \Gamma  }  }_{ \Gamma} \lesssim h^q  \norm{ \nabla \tilde{v} }_{\Gamma_h} + 
\norm{ \nabla \tilde{v} \cdot \normal_{ \Gamma_h  }  }_{ \Gamma_h}.
\] 
\item And for tangential derivatives:
\[
\norm{ \nabla_{\Gamma_h} \tilde{v} }_{ \Gamma_h} \lesssim h^q  \norm{ \nabla v }_{\Gamma} + 
\norm{ \nabla_{\Gamma} v  }_{ \Gamma },
\quad 
\norm{ \nabla_{\Gamma} v   }_{ \Gamma} \lesssim h^q  \norm{ \nabla \tilde{v} }_{\Gamma_h} + 
\norm{ \nabla_{\Gamma_h} \tilde{v}   }_{ \Gamma_h}.
\quad 
\] 
\end{enumerate}
\end{lemm}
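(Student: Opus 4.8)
The plan is to reduce every statement to the pointwise chain rule $\nabla \tilde v = \dPhi^T (\nabla v \circ \Phi_h)$ (reading $\nabla v$ as the ambient gradient, i.e. the trace of the bulk gradient, which is what makes the normal/tangential splitting in the statement meaningful) together with the surface change-of-variables formula and the perturbation estimates already collected in Lemma~\ref{lem:Trafo-Grad-bounds}, in (\ref{eq:DPhi_estimates}) and in Lemma~\ref{lem:normal_tan_proj}. For part (a) I would first record the Nanson-type surface transformation $\mathrm{d}S_x = \abs{\det \dPhi}\,\norm{\dPhi^{-T}\normal_{\Gamma_h}}_2\,\mathrm{d}S_y$ induced by $\Phi_h|_{\Gamma_h}\colon \Gamma_h \to \Gamma$. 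By the determinant estimate in (\ref{eq:DPhi_estimates}) and Lemma~\ref{lem:normal_tan_proj}(a) this surface Jacobian is bounded above and below by constants independent of $h$, so transporting $\norm{v}_\Gamma^2 = \int_{\Gamma_h}\abs{\tilde v}^2\,\abs{\det \dPhi}\,\norm{\dPhi^{-T}\normal_{\Gamma_h}}_2\,\mathrm{d}S_y$ yields $\norm{v}_\Gamma \sim \norm{\tilde v}_{\Gamma_h}$; the identical argument gives $\norm{w}_\Gamma \sim \norm{w\circ\Phi_h}_{\Gamma_h}$ for any (scalar or vector) function pulled back along $\Phi_h$, a fact I will reuse throughout. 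For the gradient equivalence I combine this with the chain rule: since $\norm{\dPhi - I}_{\infty,\Omega}\lesssim h^q$ by (\ref{eq:Bound-Phi-Grad}), the map $\dPhi^T$ is uniformly close to the identity, so $\abs{\dPhi^T w}\sim\abs{w}$ pointwise, and the surface-measure equivalence then gives $\norm{\nabla\tilde v}_{\Gamma_h}\sim\norm{\nabla v}_\Gamma$.

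For the normal derivative in (b) I would use the chain rule in the form $\nabla\tilde v\cdot\normal_{\Gamma_h} = (\nabla v\circ\Phi_h)\cdot(\dPhi\,\normal_{\Gamma_h})$ and then perform a double splitting $\dPhi\,\normal_{\Gamma_h} = \normal_{\Gamma_h} + (\dPhi - I)\normal_{\Gamma_h}$ and $\normal_{\Gamma_h} = \normal_\Gamma\circ\Phi_h + (\normal_{\Gamma_h} - \normal_\Gamma\circ\Phi_h)$. The two correction terms are controlled, respectively, by $\norm{\dPhi - I}_{\infty,\Omega}\lesssim h^q$ (eq.~(\ref{eq:Bound-Phi-Grad})) and by Lemma~\ref{lem:normal_tan_proj}(b), each contributing a term of size $h^q\abs{\nabla v\circ\Phi_h}$, while the leading term is exactly $(\nabla v\cdot\normal_\Gamma)\circ\Phi_h$. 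Taking the $L^2(\Gamma_h)$ norm and invoking the scalar pullback equivalence from (a) produces the first bound in (b); the second is obtained symmetrically, applying the same reasoning to $\Phi_h^{-1}$ and reading the relations (\ref{eq:normal-trafo-Phi}) in the opposite direction.

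For the tangential derivative in (c) I would compare $\nabla_{\Gamma_h}\tilde v = \Ptan{\Gamma_h}\dPhi^T(\nabla v\circ\Phi_h)$ with the pulled-back surface gradient $(\nabla_\Gamma v)\circ\Phi_h = (\Ptan{\Gamma}\circ\Phi_h)(\nabla v\circ\Phi_h)$. Writing $\Ptan{\Gamma_h}\dPhi^T - \Ptan{\Gamma}\circ\Phi_h = \Ptan{\Gamma_h}(\dPhi^T - I) + (\Ptan{\Gamma_h} - \Ptan{\Gamma}\circ\Phi_h)$, the first summand is $O(h^q)$ by (\ref{eq:Bound-Phi-Grad}) and the second by Lemma~\ref{lem:normal_tan_proj}(c), so the difference of the two tangential gradients is bounded in $L^2(\Gamma_h)$ by $h^q\norm{\nabla v\circ\Phi_h}_{\Gamma_h}\lesssim h^q\norm{\nabla v}_\Gamma$. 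A triangle inequality together with $\norm{(\nabla_\Gamma v)\circ\Phi_h}_{\Gamma_h}\sim\norm{\nabla_\Gamma v}_\Gamma$ gives the claim, and the reverse estimate follows symmetrically.

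I expect the only genuinely delicate point to be the clean derivation of the surface Jacobian formula and the verification that it is uniformly non-degenerate, since the paper states only the bulk transformation (\ref{eq:integral-trafo}); once the surface change of variables and the resulting scalar $L^2$ equivalence are in hand, parts (b) and (c) reduce to a systematic separation of the leading pullback term from $O(h^q)$ perturbations using the projection estimates of Lemma~\ref{lem:normal_tan_proj}, which I regard as routine bookkeeping rather than a real obstacle.
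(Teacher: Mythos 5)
Your proposal is correct and follows essentially the same route as the paper's proof in Appendix~\ref{B:geom}: the same chain rule, the same surface change-of-variables formula $\dS_{\Gamma} = \det(\dPhi)\,\norm{\dPhi^{-T}\normal_{\Gamma_h}}_{2}\,\dS_{\Gamma_h}$, and the same splittings into a leading pullback term plus $O(h^q)$ perturbations controlled by Lemma~\ref{lem:normal_tan_proj} and (\ref{eq:Bound-Phi-Grad})--(\ref{eq:DPhi_estimates}), the only cosmetic difference being that you split pointwise on $\Gamma_h$ and transfer norms at the end, whereas the paper first transforms the integral to the other surface. The one point you flag as delicate, the surface Jacobian formula, needs no derivation: the paper simply cites it from \cite[eq.\ (3.40)]{LR17}, and its uniform non-degeneracy follows exactly as you argue from (\ref{eq:DPhi_estimates}) and Lemma~\ref{lem:normal_tan_proj}(a).
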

Finally, we will have to quantify the extent to which a function $f \circ \Phi_h^{-1}(x)$ differs from 
$f(x)$ when compared at the same point $x$. 
Clearly, smoothness assumptions need to be imposed on $f$ if one wants to 
guarantee that the difference is small. 
We remark that part (a) and (b) of the following lemma have been used before in the literature, see e.g.\ \cite[in proof of Lemma 12]{L_GUFEMA_2017}. 
\begin{lemm}[Pullback]\label{lem:pullback}
Let $M \subset \Omega$ and $U,V \subset \Omega$ be open sets that are sufficiently large to contain all line segments between points in $ \Phi_h^{-1}(M)$ and $ M$, respectively between $ \Phi_h(M) $ and $ M$.
\begin{enumerate}[label=(\alph*)] 
\item For $f \in W^{1,\infty}(U) $ we have $\norm{ f \circ \Phi_h^{-1} -f }_M \lesssim  h^{q+1} \sqrt{\abs{M}} \norm{\nabla f}_{\infty,U}$, where $\abs{M}$ denotes the Lebesgue measure of $M$.
\item  For $f \in W^{2,\infty}(U) $ we have 
\[ \norm{ \nabla ( f \circ \Phi_h^{-1} - f ) }_M \lesssim  h^{q+1} \sqrt{\abs{M}} \norm{ f}_{W^{2,\infty}(U) } + h^q \norm{ \nabla{f} }_{M}. \]
\item For $M = \Theta_h(\el)$ with $\el \in \mesh$ and $f \in W^{3,\infty}(V) $ it holds that  
\[ \norm{ f \circ \Phi_h - f }_{ H^2(M) } \lesssim h^{q-1} \sqrt{ \abs{M} } \norm{ f }_{W^{3,\infty}(V)}.
\]
\item Moreover, for $f \in H^2( \Psi(T))$ we have 
\begin{align*}
  \norm{ \partial_{x_{\nu}} \partial_{x_{\mu}} (f \circ \Phi_h) \circ \Phi_h^{-1}  - \partial_{x_{\nu}} \partial_{x_{\mu}} f  }_{ \Psi(T) }
\lesssim
   h^{q} \norm{f}_{ H^2( \Psi(T)) } +  h^{q-1 } \norm{ \nabla f}_{  \Psi(T) }. 
\end{align*}
\end{enumerate}
\end{lemm}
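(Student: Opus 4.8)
The four estimates all rest on the same two ingredients: a first-order Taylor (fundamental-theorem-of-calculus) expansion of $f$ or of its derivatives along the segment joining a point $x$ to its image under $\Phi_h^{\pm 1}$, and the chain rule together with the bounds on the derivatives of $\Phi_h$ collected in Lemma~\ref{lem:Trafo-Grad-bounds}, equation~(\ref{eq:DPhi_estimates}) and Lemma~\ref{lem:Phi-HigherOrderBounds}. A preliminary fact I would record is that $\norm{\Phi_h^{-1}-\ID}_{\infty,\Omega}\lesssim h^{q+1}$: writing $y=\Phi_h^{-1}(x)$ gives $\Phi_h^{-1}(x)-x=y-\Phi_h(y)$, so this follows at once from (\ref{eq:Bound-Phi-Grad}). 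Likewise $\norm{D\Phi_h^{-1}-I}_{\infty,\Omega}\lesssim h^q$, since $D\Phi_h^{-1}(x)=\big(\dPhi(\Phi_h^{-1}(x))\big)^{-1}$ and (\ref{eq:DPhi_estimates}) controls $\dPhi^{-1}-I$. The hypotheses on $U,V$ are exactly what make the segment integrals below well defined.

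For part (a) I would use
\[
f(\Phi_h^{-1}(x))-f(x)=\int_0^1 \nabla f\big(x+t(\Phi_h^{-1}(x)-x)\big)\cdot(\Phi_h^{-1}(x)-x)\,\mathrm{d}t,
\]
so the integrand is bounded pointwise by $\norm{\nabla f}_{\infty,U}\,\norm{\Phi_h^{-1}-\ID}_{\infty,\Omega}\lesssim h^{q+1}\norm{\nabla f}_{\infty,U}$; squaring and integrating over $M$ produces the factor $\sqrt{\abs{M}}$. For part (b) the chain rule gives $\nabla(f\circ\Phi_h^{-1})(x)=(D\Phi_h^{-1}(x))^T(\nabla f)(\Phi_h^{-1}(x))$, and I would split
\[
\nabla(f\circ\Phi_h^{-1})-\nabla f=(D\Phi_h^{-1})^T\big[(\nabla f)\circ\Phi_h^{-1}-\nabla f\big]+\big((D\Phi_h^{-1})^T-I\big)\nabla f.
\]
The first term is estimated by applying the part-(a) argument componentwise to $\nabla f$, giving $h^{q+1}\sqrt{\abs{M}}\norm{f}_{W^{2,\infty}(U)}$, and the second is bounded in $L^2(M)$ by $\norm{D\Phi_h^{-1}-I}_{\infty,\Omega}\norm{\nabla f}_M\lesssim h^q\norm{\nabla f}_M$.

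Part (c) decomposes the $H^2(M)$-norm into its $L^2$, gradient and Hessian parts, now with $\Phi_h$ in place of $\Phi_h^{-1}$. The first two are handled as in (a) and (b), using $\norm{\dPhi-I}_{\infty,\Omega}\lesssim h^q$ from Lemma~\ref{lem:Trafo-Grad-bounds}, and are of order $h^{q+1}$ and $h^q$, hence dominated by $h^{q-1}$. Differentiating twice by the chain rule produces one term carrying two factors of $\dPhi$ and one term carrying a single second-order factor $D^2\Phi_h$; the latter is decisive, since $\max_T\norm{D^2\Phi_h}_{\infty,\Theta_h(T)}\lesssim h^{q-1}$ by Lemma~\ref{lem:Phi-HigherOrderBounds}(b), and it multiplies $\nabla f$, contributing $h^{q-1}\sqrt{\abs{M}}\norm{\nabla f}_{\infty,V}$. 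Comparing the remaining term with $D^2 f(x)$ and expanding $(D^2 f)\circ\Phi_h-D^2 f$ by the part-(a) estimate (this is where $f\in W^{3,\infty}$ is used) yields only higher powers of $h$.

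The \emph{delicate} case is part (d), where $f$ is merely $H^2$, so $L^\infty$-bounds on $f$ are unavailable and every $f$-factor must stay in $L^2(\Psi(T))$ while the geometric factors are taken out in $L^\infty$. Writing the chain-rule expansion of $\partial_{x_\nu}\partial_{x_\mu}(f\circ\Phi_h)$ and precomposing with $\Phi_h^{-1}$ (so that $\Phi_h(\Phi_h^{-1}(x))=x$ and the $f$-derivatives are evaluated at $x$, leaving no Jacobian in the final integral over $\Psi(T)$) I would reach
\begin{align*}
\partial_{x_\nu}\partial_{x_\mu}(f\circ\Phi_h)\circ\Phi_h^{-1}-\partial_{x_\nu}\partial_{x_\mu}f
&=\sum_{k,l}(\partial_k\partial_l f)\,\big[(\partial_\nu(\Phi_h)_l\,\partial_\mu(\Phi_h)_k)\circ\Phi_h^{-1}-\delta_{\nu l}\delta_{\mu k}\big] \\
&\quad +\sum_k(\partial_k f)\,(\partial_\nu\partial_\mu(\Phi_h)_k)\circ\Phi_h^{-1}.
\end{align*}
The bracket in the first sum is $O(h^q)$ in $L^\infty$ by $\norm{\dPhi-I}_{\infty,\Omega}\lesssim h^q$, so that sum is bounded by $h^q\norm{D^2 f}_{\Psi(T)}\leq h^q\norm{f}_{H^2(\Psi(T))}$; the second sum carries $D^2\Phi_h$, of order $h^{q-1}$ by Lemma~\ref{lem:Phi-HigherOrderBounds}(b) (composed with $\Phi_h^{-1}$, which maps $\Psi(T)$ onto $\Theta_h(T)$ where the bound holds), and multiplies $\nabla f$, giving $h^{q-1}\norm{\nabla f}_{\Psi(T)}$. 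The main obstacle I anticipate is justifying this chain-rule manipulation for a merely $H^2$ function: I would establish it first for smooth $f$, using $\Phi_h\in C^{q+1}(\Theta_h(T))$ with uniformly bounded derivatives, and then pass to general $f\in H^2(\Psi(T))$ by density, the uniform $L^\infty$ control of the $\Phi_h$-factors ensuring continuity of both sides in the $H^2$-norm.
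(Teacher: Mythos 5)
Your proposal is correct and follows essentially the same route as the paper's proof: the fundamental-theorem-of-calculus segment argument for (a), the identical chain-rule splitting with (\ref{eq:DPhi_estimates}) for (b), the chain-rule expansion combined with Lemma~\ref{lem:Phi-HigherOrderBounds} for (c), and the key precomposition with $\Phi_h^{-1}$ in (d) so that all $f$-derivatives are evaluated at $x$ and stay in $L^2(\Psi(T))$. Your extra remarks (deriving $\norm{\Phi_h^{-1}-\ID}_{\infty,\Omega}\lesssim h^{q+1}$ from (\ref{eq:Bound-Phi-Grad}), spelling out the lower-order terms of the $H^2$-norm in (c), and the density argument justifying the chain rule for $f\in H^2$ in (d)) are details the paper leaves implicit, and they are correct.
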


\section{Isoparametric unfitted FEM}\label{section:iso-FEM}
In this section we introduce a stabilized FEM for the solution of (\ref{eq:PDE_bulk})-(\ref{eq:measurements}).
Based on the technique to resolve the interface discussed in Section~\ref{section:IF-iso} we start in Subsection~\ref{ssection:iso-FES} by introducing isoparametric FEM spaces. 
In Subsection~\ref{ssection:method} we define the actual discrete variational formulation, discuss stabilization terms and record some basic properties including stability and bounds on geometric errors.
The final Subsection~\ref{ssection:unfitted-interp} deals with interpolation into unfitted isoparametric FEM spaces. 
The results in this section are independent of the stability properties of the continuous problem, in particular equation (\ref{eq:cond_stab}) is never 
used.

\subsection{Isoparametric finite element spaces}\label{ssection:iso-FES} 
Following \cite{LR17} we define the following isoparametric FEM spaces: 
For $\clubsuit \in \{ \Gamma, 0\}$:   
\begin{equation}\label{eq:def-fes-Theta-Gamma}
\curvedFes{\Theta}^{\clubsuit} := \{ v_h \circ \Theta_h^{-1} \mid  v_h \in \FES{p}{\clubsuit} \}, 
\quad 
\curvedFes{\Phi}^{\clubsuit} := \{ v_h \circ \Phi_h^{-1} \mid  v_h \in \curvedFes{\Theta}^{\clubsuit}  \},
\end{equation}
\begin{equation}\label{eq:def-fes-Psi}
\curvedFes{\Psi}^{\clubsuit} := \{ v_h \circ \Psi^{-1} \mid  v_h \in  \FES{p}{\clubsuit} \}. 
\end{equation}
Note that $\curvedFes{\Theta}^{\Gamma}$ is an isoparametric version of the Cut-Finite element space defined in  (\ref{eq:Cutfes}), while $\curvedFes{\Theta}^{0}$ is based on a standard finite element space on the background mesh with homogeneous Dirichlet boundary conditions on $\partial \Omega$.
Since the mappings $\Psi$ and $\Theta_h$ reduce to the identity away from the interface, it follows that functions in the transformed spaces vanish on $\partial \Omega $ as well. \\ 
Let us also note that some of the tools frequently used in the analysis involving the spaces $\FES{p}{\clubsuit}$ easily carry over to their curved counterparts. 
Indeed, as $w_h \in \curvedFes{\Theta}^{\clubsuit}$ is of the  form $w_h = v_h \circ \Theta_h^{-1}$ with $v_h \in \FES{p}{\clubsuit}  $  and 
we have from \cite[Lemma 3.14]{LR17} the relations
\begin{equation}\label{eq:scaling_p1-geom}
\norm{ \nabla w_h }_{ \Theta_h( \el) } \sim \norm{\nabla v_h}_{ T} \text{ and }  \norm{w_h}_{\Theta_h( \el)} \sim \norm{v_h}_{ T  }
\end{equation}
we can transfer standard inverse inequalities from $v_h$ to $w_h$, e.g.
\begin{align}\label{eq:inverse-ieq-gradient}
\norm{ \nabla w_h }_{ \Theta_h( \el) } \sim \norm{\nabla v_h}_{ T} \lesssim h^{-1} \norm{v_h}_{ T  } \sim h^{-1} \norm{w_h}_{\Theta_h( \el)}. 
\end{align}
Since $\norm{w_h}_{ \Theta_h( \Gamma_{\el} ) } \sim \norm{ v_h }_{  \Gamma_{\el} } $, we also obtain in analogy to the case with piecewise linear geometry, see \cite{HH02}, that
\begin{equation}\label{eq:trace-inequality-dfm}
\norm{w_h}_{ \Theta_h( \Gamma_{\el} ) }^2 \lesssim h^{-1} \norm{w_h}_{ \Theta_h(\el) }^2 + h^{1} \norm{ \nabla w_h}_{ \Theta_h(\el) }^2.   
\end{equation}
We remark that (\ref{eq:scaling_p1-geom}) and (\ref{eq:trace-inequality-dfm}) are valid for general $w = v \circ  \Theta_h^{-1}$ with $v \in H^1(T)$.
\subsection{ Stabilized method for unique continuation   }\label{ssection:method} 
Let us start by defining the bilinear forms usually employed for the unfitted discretization 
of problem (\ref{eq:PDE_bulk})-(\ref{eq:normal_jump_IF}) when homogeneous Dirichlet boundary conditions on $\partial \Omega$ are given. 
In the bulk we define 
\begin{equation}\label{eq:bfi_int}
a(u,v) := \sum\limits_{i=1,2} (\mu_i \nabla u_i, \nabla v_i)_{ \Omega_i } - (\rho_i u_i,  v_i)_{ \Omega_i }, 
\end{equation}
and analogously on the discrete geometry
\begin{equation}\label{eq:bfi_h_int}
a_h(u,v) := \sum\limits_{i=1,2} (\mu_i \nabla u_i, \nabla v_i)_{ \Omega_{i,h} } - (\rho_i u_i,  v_i)_{ \Omega_{i,h} }. 
\end{equation}
On the interface we define  
\begin{equation}\label{eq:Nitsche-terms-def}
N^{c}(u,v) := \int\limits_{\Gamma} \avg{-\mu \nabla v} \cdot \normal_{\Gamma} \jump{ u } \; \dS, \quad \avg{ \mu \nabla v } \cdot \normal_{\Gamma}  := \kappa_1 \mu_1 \nabla v_1 \cdot n_{\Gamma} + \kappa_2 \mu_2 \nabla v_2 \cdot n_{\Gamma}   
\end{equation}
for convex weights $\kappa_1 + \kappa_2 = 1$ in the numerical flux.
Note that $ N^{c}(u,v) = 0$ for the exact solution $u$ of (\ref{eq:PDE_bulk})-(\ref{eq:normal_jump_IF}) holds. 
On the discrete geometry we define $N^{c}_h$ analogously where the integration is now over $\Gamma_h$ and the exact normal is replaced by $\normal_{\Gamma_h}$.
We combine bulk and interface terms into
\begin{equation}\label{def:A-bil}
A(u,v) := a(u,v) + N^{c}(u,v), \quad 
A_h(u,v) := a_h(u,v) + N_h^{c}(u,v).
\end{equation}
Notice that $N_h^{c}(u,v)$ is the only term on the interface remaining in a standard unfitted Nitsche formulation \cite{HH02} when $v$ is continuous across $\Gamma$. \par 
Next we proceed to the definition of the linear forms. 
Let $f_{i} = \calL u_i \in W^{1,\infty}( \Omega_{i} )$ be the exact right hand side of (\ref{eq:PDE_bulk}). 
In applications exact data is usually not available. 
We merely have access to perturbed data 
\[ \tilde{f}_i := f_i + \delta f_i, \quad  \delta f_i \in L^2(\Omega_i). \]
For implementation, we have to extend the data $\tilde{f}_i $ given on $\Omega_i$ to 
$\Omega_{i,h}$. 
To this end, we use the Sobolev extension operator from (\ref{eq:def-Sobolev-Extension}). 
Hence,
\[ \Ext_i \tilde{f}_i = \Ext_i f_i  +  \Ext_i \delta f_i := f_{i,h} + \delta f_{i,h}  
\]
is well defined on $\BrokenOmegah$.
Note that
\[
 \norm{  f_{i,h} }_{ W^{1,\infty}( \Omega_{i,h} ) }  \lesssim \norm{  f_{i} }_{ W^{1,\infty}( \Omega_{i} )  }, \quad 
\norm{  \delta f_{i,h} }_{ \Omega_{i,h} }  \lesssim \norm{  \delta f_i }_{ \Omega_i } 
\]
holds by continuity of the extension.
We write $ f_{h} := ( f_{1,h} , f_{2,h} )$ and the same for  $\delta f_h$ and $\Ext \tilde{f}$. 
Then we define the linear forms
\[
\ell(w) : =  (f ,w )_{ \BrokenOmega }, \; 
\ell_h(w_h) : =  ( f_{h},w_{h})_{ \BrokenOmegah  }, \; 
\tilde{\ell}_h(w_h) : =  ( \Ext \tilde{f} ,w_{h} )_{ \BrokenOmegah  }.
\]
We proceed analogously for the data domain.
Recall that $u_{\omega} := u|_{\omega}$ for $u$ being the exact solution of (\ref{eq:PDE_bulk})-(\ref{eq:normal_jump_IF}). We assume perturbed data 
\[
\tilde{u}_{\omega} := u_{\omega} + \delta u_{\omega}, \quad \delta u_{\omega} \in L^2(\omega)
\]
is given.
The strength of the entire data perturbation is measured in the norm 
\begin{equation}\label{eq:delta-def}
\delta := \norm{ \delta u_{\omega} }_{ \omega } + \norm{ \delta f }_{\BrokenOmega}. 
\end{equation}
To define a stable method for the unique continuation problem, we follow 
the approach introduced in \cite{B13}. 
The idea is to formulate the ill-posed problem as a discrete optimization problem consisting 
of a data fidelity term: $\norm{u_h - \tilde{u}_{\omega}  }_{\omega}$, a term to enforce the PDE constraint: $A_h(u_h,z_h) - \tilde{\ell}_h(z_h) $, where $z_h$ represents a Lagrange multiplier, 
and suitable stabilization terms to guarantee unique solvability and incorporate a priori 
knowledge of the solution.
The stabilized FEM is then obtained from the first order optimality condition. 
We refer the interested reader to Remark~\ref{rem:Lagrangian} for a more detailed derivation and proceed now directly to the discrete variational formulation defined as follows.
Find $(u_h,z_h) \in \curvedFes{\Theta}^{\Gamma} \times  \curvedFes{\Theta}^{0}  $ such that 
\begin{align}
& B_h[(u_h,z_h),(v_h,w_h)] = ( \tilde{u}_{\omega},v_h)_{\omega} + \tilde{\ell}_{h}(w_h) + \sum\limits_{i=1,2} \sum\limits_{ \el \in \mesh^{i} } h^2 ( \Ext_i \tilde{f}_i , \calL v_{h,i} )_{  \Theta_h( \el_i) } \nonumber \\ 
	&= (u_{\omega} ,v_h )_{\omega} + \ell_h(w_h) +   \sum\limits_{i=1,2} \sum\limits_{ \el \in \mesh^{i} } h^2 (  f_{i,h}, \calL v_{h,i} )_{  \Theta_h( \el_i) }  + g(v_h,w_h)  \label{eq:discr_stab_var_form}
\end{align}
with 
\begin{align}
& g(v_h,w_h) := (\delta u_{\omega}  ,v_h )_{\omega} + (\delta f_h, w_h )_{\BrokenOmegah}  +   \sum\limits_{i=1,2} \sum\limits_{ \el \in \mesh^{i} } h^2 (  \delta f_{i,h}, \calL v_{h,i} )_{  \Theta_h( \el_i) }, \nonumber \\ 
& B_h[(u_h,z_h),(v_h,w_h)] = A_h(v_h,z_h) + s_h(u_h,v_h) + (u_h,v_h)_{ \omega } \nonumber \\ 
	&  \phantom{ B_h[(u_h,z_h),(v_h,w_h)]   }  + A_h(u_h,w_h) - s_h^{\ast}(z_h,w_h). \label{eq:stab_bfi}
\end{align}
We remark that as $z_h$ is continuous across the interface, the variables $z_{h,i}$ which appear in the definition of $A_h(v_h,z_h)$ (see e.g.\ (\ref{eq:bfi_h_int}))  are simply the restriction of $z_h$ to the respective subdomains.  
It remains to define the primal $s_h(\cdot,\cdot)$ and dual $s_h^{\ast}(\cdot,\cdot)$ stabilization terms.  
We commence by introducing the constituents of the former. 
A fairly standard component of methods based on the framework in \cite{B13} are Galerkin-least squares:
\[
	\StabGLS{u_h}{v_h} := \sum\limits_{i=1,2} \sum\limits_{ \el \in \mesh^{i} } h^2 ( \calL u_{h,i} , \calL v_{h,i} )_{  \Theta_h( \el_i) }
\]
and continuous interior penalty terms:
\begin{equation}\label{eq:J-CIP}
	\StabCIP{u_h}{v_h} := \sum\limits_{i=1,2} \sum\limits_{ F\in  \facets^i } h \! \! \int\limits_{\Theta_h(F) } \! \! \mu_i \jump{ \nabla u_{h,i} }_{ \Theta_h(F) } \cdot \normal \jump{ \nabla v_{h,i} }_{ \Theta_h(F) } \cdot \normal  \; \dS_{ \Theta_h(F) }.
\end{equation}
These terms are needed to control the $H^{-1}$-norm of the PDE residual, see the estimate of the terms $\mathrm{I}_1$ and $\mathrm{I}_2$ in Lemma~\ref{lem:weak-conv}.  
Here, $\facets^i := \{ F = T_a \cap T_b \mid T_a,T_b \in \mesh^i, T_a \neq T_b, F \nsubseteq \partial \Omega   \}$ is used to denote the facets of the active mesh not lying on $\partial \Omega$ (see Figure~\ref{fig:Stab-sketch-mod} for a sketch), $\jump{v}_{F} := v|_{T_a} -  v|_{T_b}$ represents the jump over a facet $F = T_a \cap T_b$ and $n$ denotes the outward pointing normal vector of $T_a$.
We remark that it would be sufficient to carry out the integration in (\ref{eq:J-CIP}) merely 
over $\Theta_h(F \cap \el_i)$, i.e.\ the part of the facet lying in $\Omega_{i,h}$. 
However, the formulation in (\ref{eq:J-CIP}) is more convenient for implementation as 
integrals over cut facets are avoided.  \par 
\begin{figure}[htbp]
\centering
\includegraphics[scale=0.35]{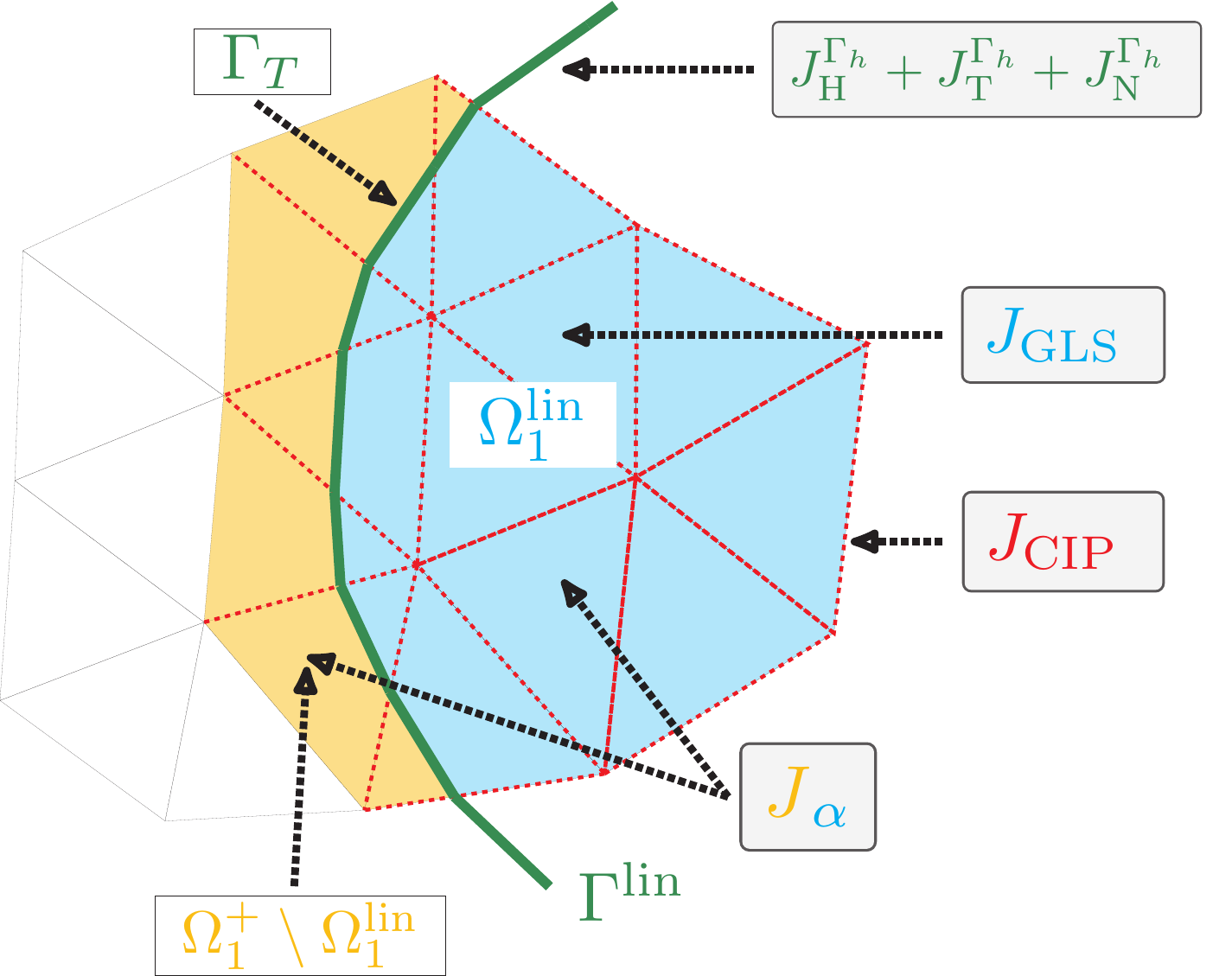}
\caption{Domain of definition of stabilization terms on the piecewise linear reference geometry ($q=1$). Facets in $\facets^1$ are indicated by red dashed lines.  } 
\label{fig:Stab-sketch-mod}
\end{figure}
Next we introduce stabilization terms on the discrete interface. 
The first two terms are strong analogs of the classical Nitsche terms commonly found 
in unfitted methods for interface problems \cite{HH02,LR17} and are required to control the terms $\mathrm{I}_3$ and $\mathrm{I}_6$ in Lemma~\ref{lem:weak-conv}.  
For $\bar{\mu} = (\mu_1+\mu_2)/2$ we define
\begin{align*}
	\StabNitsche{u_h}{v_h} &:= \frac{ \bar{\mu} }{h}  \int\limits_{ \Gamma_h } \jump{ u_h }_{ \Gamma_h } \jump{ v_h }_{ \Gamma_h } \dS_{\Gamma_h}, \\ 
 \StabNablaNormal{u_h}{v_h} &:=  h \int\limits_{ \Gamma_h } \jump{ \mu \nabla u_h }_{ \Gamma_h } \cdot \normal_{\Gamma_h} \jump{  \mu \nabla v_h  }_{ \Gamma_h } \cdot \normal_{\Gamma_h}  \; \dS_{\Gamma_h}.
\end{align*}
Additionally, we will stabilize the jump of the tangential gradient 
\[
	\StabNablaGamma{u_h}{v_h} :=  h \bar{\mu} \int\limits_{ \Gamma_h } \jump{ \nabla_{\Gamma_h } u_h }_{ \Gamma_h } \jump{ \nabla_{\Gamma_h } v_h }_{ \Gamma_h } \dS_{\Gamma_h}.
\]
The need for this penalty arises due to the nonconformity $\curvedFes{\Theta}^{\Gamma} \nsubseteq  H^1(\Omega)$ as we will see later in Lemma~\ref{lem:corr_IF_jumps}.
We combine the stabilization on the interface into 
\begin{equation}\label{eq:IF-stab-comb}
	\StabIFall{\cdot}{\cdot} := \gamma_{\mathrm{IF}} \left( \StabNitsche{\cdot}{\cdot} + \StabNablaNormal{\cdot}{\cdot} +  \StabNablaGamma{\cdot}{\cdot} \right), 
\end{equation}
where $\gamma_{\mathrm{IF}}$ is some positive parameter which we will set equal to one for the analysis. 
Whereas the previous terms would be consistent on the exact geometry for sufficiently smooth solutions of (\ref{eq:PDE_bulk})-(\ref{eq:normal_jump_IF}), 
we now introduce a weakly inconsistent Tikhonov term:
\begin{equation}\label{eq:def-Tikh}
\StabTikh{u_h}{v_h} := h^{2q} \sum\limits_{i=1,2}  \alpha_1 ( u_{h,i}, v_{h,i} )_{ \Omega_{i,h}^+ } + \alpha_2 (\nabla u_{h,i}, \nabla v_{h,i} )_{ \Omega_{i,h}^+ } , 
\end{equation}
for some $\alpha_i >0, i=1,2$.
A weaker version of this stabilization only involving the $L^2$-norm has 
been utilized previously in the context of unique continuation problems in e.g.\ \cite{BDE21,BP22}. 
Here we require additionally control over the gradient to compensate for geometric errors introduced 
by imperfect approximation of the interface, i.e.\ $\dist(\Gamma, \Gamma_h) \lesssim \mathcal{O}(h^{q+1})$.
In the analysis this becomes apparent in the final estimate of the proof of Lemma~\ref{lem:corr_IF_jumps} whereas we refer to Figure~\ref{fig:non-convex-diffusion-Stab} for 
a numerical illustration.
We remark that it is possible to localize these geometric errors and consequently the corresponding stabilization to a band of elements around the interface by exploiting the locality of $\Theta_h$. 
As this measure did not lead to a significant improvement of the numerical results, we decided 
to present the technically simpler global stabilization in this paper. \\
We combine the introduced terms into the primal stabilization: 
\begin{align}
s_h(u_h,v_h) &:= \tilde{s}_h(u_h,v_h)  + \StabGLS{u_h}{v_h}, \nonumber \\ 
\tilde{s}_h(u_h,v_h) &:= \StabCIP{u_h}{v_h}  + \StabTikh{u_h}{v_h} +
\StabIFall{u_h}{v_h}.  \label{eq:primal_stab_withoutGLS}
\end{align}
The dual stabilization is simply given by
\begin{equation}\label{eq:dual_stab}
s^{\ast}_h(z_h,w_h) := (\mu \nabla z_h, \nabla w_h)_{ \Omega }.
\end{equation}
The dual stabilization needs to be sufficiently strong to allow for certain continuity estimates, see e.g.\ Lemma~\ref{lem:Ah-continuity}. Using 
integration by parts one could think about replacing the stabilization of the gradient by a combination of $\StabCIP{\cdot}{\cdot}, \StabGLS{\cdot}{\cdot}$ and 
a penalty of the normal derivative on $\Gamma_h$ and $\partial \Omega$. Note however that the presence of $\StabCIP{\cdot}{\cdot}$ would introduce additional couplings in the matrix. 
Hence, we prefer to use the simpler choice (\ref{eq:dual_stab}) in this article.

\begin{rema}[Derivation of variational formulation]\label{rem:Lagrangian}
Let us consider the following Lagrangian:
\begin{align*}
L(u_h,z_h) := \frac{1}{2} \norm{u_h - \tilde{u}_{\omega}  }_{\omega}^2 +  A_h(u_h,z_h) - \tilde{\ell}_h(z_h) + \frac{1}{2} \tilde{s}_h(u_h,u_h) 
+ \frac{1}{2} \StabGLS{u_h - \Ext u }{ u_h - \Ext u } -\frac{1}{2} s^{\ast}_h(z_h,z_h),
\end{align*}
where $ \Ext u  := ( \Ext_1 u_1  , \Ext_2 u_2  )$ is a Sobolev extension of the exact solution of (\ref{eq:PDE_bulk})-(\ref{eq:normal_jump_IF}).  
To search for critical points of $L$ we take the first order optimality conditions to arrive at 
\begin{align*}
0 &= \frac{\partial L}{\partial z_h}(w_h) = A_h(u_h,w_h) - \tilde{\ell}_h(w_h) - s^{\ast}_h(z_h,w_h), \quad \forall w_h \in  \curvedFes{\Theta}^{0}, \\
	0 & = \frac{\partial L}{\partial u_h}(v_h) =  (u_h - \tilde{u}_{\omega},v_h)_{ \omega }  + A_h(v_h,z_h) + s_h(u_h,v_h) 
            - \StabGLS{\Ext u }{ v_h },  \quad \forall v_h \in \curvedFes{\Theta}^{\Gamma}. 
\end{align*}
Adding these equations leads to the variational formulation (\ref{eq:discr_stab_var_form}) upon making the replacement 	
\[
\StabGLS{\Ext u }{ v_h } =  \sum\limits_{i=1,2} \sum\limits_{ \el \in \mesh^{i} } h^2 ( \calL \Ext_i u_i , \calL v_{h,i} )_{  \Theta_h( \el_i) }  \rightarrow    \sum\limits_{i=1,2} \sum\limits_{ \el \in \mesh^{i} } h^2 ( \Ext_i \tilde{f}_i , \calL v_{h,i} )_{  \Theta_h( \el_i) }. 
\]
Notice that in practice the exact solution $u_i$ is of course unknown so we cannot use $\calL \Ext_i u_i$ directly as data. 
However, restricted to $\Omega_i$ we have $(\calL \Ext_i u_i)|_{\Omega_i} = \calL u_i = f_i$ as $u_i$ fulfills (\ref{eq:PDE_bulk}). 
Hence, in the noise-free case one would replace $\calL \Ext_i u_i$ by the data extension $ \Ext_i f_i = f_{i,h} $. 
This will later be justified in Lemma~\ref{lem:GLS} which shows that the corresponding consistency errors converge at optimal order. 
In the presence of noise we then simply replace $ \Ext_i f_i$ by $\Ext_i \tilde{f}_i$, which leads to the formulation (\ref{eq:discr_stab_var_form}). 
\end{rema}
\begin{rema}[Discretization space for dual variable]
For the unique continuation problem the dual variable $z_h$ simply approximates\footnote{We will quantify this statement later in Proposition~\ref{prop:conv_discr_error_tnorm}.} zero and is therefore taken from the space $\curvedFes{\Theta}^{0}$ which solely contains functions that are continuous over the interface.
However, for control problems as considered in \cite{BFMO23} the dual variable approximates the control. Following \cite{GLL90}, the control variable of minimal $L^2$-norm solves the homogeneous adjoint problem which inherits the discontinuous coefficients of the original problem. 
Consequently, the latter then has to be discretized using a subspace of $\curvedFes{\Theta}^{\Gamma}$. 
We have extended our error analysis to cover this case and arrived at the same final results. 
Here we present the slightly simpler version using a continuous variable $z_h$, yet we point out that the mentioned extension could pave the way to include interfaces in the control problem considered in \cite{BFMO23}. 
\end{rema}

\subsubsection{Norms} 
For the error analysis we introduce the norms 
\begin{equation}\label{eq:stab_norms}
\norm{v_h}_{s_h} := s_h(v_h,v_h)^{1/2}, \; v_h \in  \curvedFes{\Theta}^{\Gamma}, \quad  
\norm{w_h}_{s^{\ast}_h} := s_h^{\ast}(w_h,w_h)^{1/2}, \; w_h \in  \curvedFes{\Theta}^{0}.   
\end{equation}
For $\alpha_1 >0$ it is clear that $\norm{\cdot}_{s_h}$ is indeed a norm on $\curvedFes{\Theta}^{\Gamma}$.
On the product space $\curvedFes{\Theta}^{\Gamma} \times \curvedFes{\Theta}^{0} $ we define the norm
\begin{equation}\label{eq:tnorm}
\tnorm{ (v_h,w_h) }^2  := \norm{v_h}_{s_h}^2 + \norm{v_h}_{\omega}^2 + \norm{ w_h }_{s_h^{\ast}}^2. 
\end{equation}
Note in passing that combining Friedrich's inequality with continuity of the data extension yields 
\begin{align}
 g(v_h,w_h) & \lesssim \norm{\delta u_{\omega}  }_{\omega} \norm{v_h }_{\omega} + \norm{ \delta f_h   }_{\BrokenOmegah} \norm{w_h }_{\Omega}   \nonumber \\
	& + \StabGLS{ v_{h} }{  v_{h} }^{1/2} h \norm{ \delta f_h }_{\BrokenOmegah} \lesssim  \delta  \tnorm{ (v_h,w_h) } \label{eq:perturbation-bound}. 
\end{align}
Moreover, we will frequently make use of the common notation 
\begin{equation}\label{eq:snorms-IF}
\norm{v_h}_{\frac{1}{2},h,\Gamma_h}^2 := h^{-1} \norm{v_h}_{\Gamma_h}^2,
\qquad
\norm{v_h}_{- \frac{1}{2},h,\Gamma_h}^2 := h \norm{v_h}_{\Gamma_h}^2.
\end{equation}
For measuring smoothness of the exact solution let us also define  
\begin{equation}\label{eq:def_q_norm}
\norm{u}_{q} := \norm{u}_{ H^{q+1}(\BrokenOmega) } + \norm{u}_{ W^{3,\infty}(\BrokenOmega) } .
\end{equation}
By Sobolev embedding theorems the second term in (\ref{eq:def_q_norm}) involving the infinity norm can be omitted for $q \geq 4$ in dimensions $d \leq 3$.

\subsubsection{Stability, consistency and continuity}
We record some elementary properties of the introduced variational forms. 
\begin{lemm}[Consistency]\label{lem:consistency_A}
Let $u$ be a solution of (\ref{eq:PDE_bulk})-(\ref{eq:normal_jump_IF}). Then 
\[ A(u,v_h) = \ell(v_h), \; \text{ for all } v_h \in \curvedFes{\Phi_h}^{0}.  \]
\end{lemm}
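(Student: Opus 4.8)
The statement is a standard consistency (Galerkin orthogonality) computation, and the approach is to integrate by parts in each subdomain and use the two interface conditions together with the boundary condition built into the test space. The plan is first to record the structural properties of the test functions $v_h \in \curvedFes{\Phi_h}^{0}$, then to dispose of the Nitsche interface term, and finally to perform the subdomain-wise integration by parts in the bulk form.

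First I would note that, since $\curvedFes{\Phi_h}^{0}$ arises from the $H^1$-conforming, zero-trace space $\FES{p}{0}$ by composition with the continuous maps $\Theta_h^{-1}$ and $\Phi_h^{-1}$, every $v_h \in \curvedFes{\Phi_h}^{0}$ is globally continuous on $\overline{\Omega}$; in particular $\jump{v_h}_{\Gamma} = 0$ so that $v_h \in H^1(\Omega)$, and $v_h|_{\partial \Omega} = 0$ because $\Phi_h$ reduces to the identity in a neighbourhood of $\partial \Omega$. Next, by the definition \eqref{eq:Nitsche-terms-def} we have $N^{c}(u,v_h) = \int_{\Gamma} \avg{-\mu \nabla v_h} \cdot \normal_{\Gamma} \jump{u}_{\Gamma} \, \dS$, and since the exact solution satisfies the continuity condition \eqref{eq:jump_IF}, namely $\jump{u}_{\Gamma} = 0$, the whole interface term vanishes: $N^{c}(u,v_h) = 0$. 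Hence, by \eqref{def:A-bil}, it remains to prove $a(u,v_h) = \ell(v_h)$.

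For the bulk form I would integrate by parts on each $\Omega_i$ separately. Using that $\mu_i$ and $\rho_i$ are constant on $\Omega_i$ and that $u$ solves \eqref{eq:PDE_bulk}, one obtains for each $i$
\[
(\mu_i \nabla u_i, \nabla v_{h,i})_{\Omega_i} - (\rho_i u_i, v_{h,i})_{\Omega_i} = (\calL u_i, v_{h,i})_{\Omega_i} + \int_{\partial \Omega_i} \mu_i (\nabla u_i \cdot \normal)\, v_{h,i} \, \dS = (f_i, v_{h,i})_{\Omega_i} + \int_{\partial \Omega_i} \mu_i (\nabla u_i \cdot \normal)\, v_{h,i} \, \dS.
\]
Summing over $i$, the contribution on $\partial \Omega$ (which only enters through $\partial \Omega_2$) drops out because $v_h|_{\partial \Omega} = 0$. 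On $\Gamma$ the outward normal of $\Omega_1$ is $+\normal_{\Gamma}$ and that of $\Omega_2$ is $-\normal_{\Gamma}$, so using $v_{h,1}|_{\Gamma} = v_{h,2}|_{\Gamma}$ (global continuity of $v_h$) the two contributions combine into $\int_{\Gamma} \jump{\mu \nabla u}_{\Gamma} \cdot \normal_{\Gamma}\, v_h \, \dS$, which vanishes by the flux condition \eqref{eq:normal_jump_IF}. This leaves $a(u,v_h) = \sum_{i=1,2}(f_i, v_{h,i})_{\Omega_i} = (f, v_h)_{\BrokenOmega} = \ell(v_h)$, and together with $N^{c}(u,v_h)=0$ this yields $A(u,v_h) = \ell(v_h)$.

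The only point requiring care — and the mildest of obstacles — is justifying the integration by parts and the trace identities: this needs $u_i$ to be $H^2$ up to $\Gamma$ and $\partial \Omega$, which is available since the smoothness norm $\norm{u}_q$ in \eqref{eq:def_q_norm} presupposes $u \in H^{q+1}(\BrokenOmega)$ with $q \geq 1$ (and $f_i = \calL u_i \in W^{1,\infty}(\Omega_i)$), so all boundary traces of $\nabla u_i$ and $v_{h,i}$ are well defined. Beyond this, one must only keep track of the opposite orientation of the outward normals at $\Gamma$, which is exactly what converts the boundary terms into the jump $\jump{\mu \nabla u}_{\Gamma}\cdot \normal_{\Gamma}$.
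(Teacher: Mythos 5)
Your proof is correct and is essentially the argument behind the paper's one-line proof, which simply defers to \cite[Lemma 5.7]{LR17}: there, as in your write-up, one kills $N^{c}(u,v_h)$ via $\jump{u}_{\Gamma}=0$, integrates by parts subdomain-wise, and uses the flux condition together with the global continuity and zero boundary trace of $v_h \in \curvedFes{\Phi_h}^{0}$ (which, note, equals $\curvedFes{\Psi}^{0}$ since $\Phi_h = \Psi \circ \Theta_h^{-1}$). Your regularity remark is also handled correctly; alternatively one can avoid invoking $\norm{u}_q$ by reading the normal flux as an $H^{-1/2}(\partial\Omega_i)$ duality pairing, since $\mu_i \nabla u_i$ has $L^2$ divergence by \eqref{eq:PDE_bulk}.
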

\begin{proof} 
Follows analogously to \cite[Lemma 5.7]{LR17}. 
\end{proof}
Further note that 
\[
B_h[(u_h,z_h),(u_h,-z_h)]
= \norm{ u_h }_{s_h}^2 + \norm{ u_h}_{ \omega}^2 + \norm{ z_h }^2_{ s_h^{\ast} } 
= \tnorm{ (u_h, z_h) } \tnorm{ (u_h, -z_h) }. 
\]
Hence, 
\begin{equation}\label{eq:inf-sup}
\sup_{ (v_h,w_h) \in \curvedFes{\Theta}^{\Gamma} \times  \curvedFes{\Theta}^{0}  } \frac{ B_h[ (u_h,z_h), (v_h , w_h) ] }{ \tnorm{ (v_h, w_h) }  } \gtrsim  \tnorm{ (u_h, z_h) }. 
\end{equation}
The bilinear form $A_h$ is also continuous in the defined norms.
\begin{lemm}[Continuity]\label{lem:Ah-continuity}
For $v \in H^1(\BrokenOmegah) +  \curvedFes{\Theta}^{\Gamma} $ and $w_h \in \curvedFes{\Theta}^{0} $ we have  
\begin{align*}
	A_h(v,w_h) \lesssim \norm{w_h}_{s^{\ast}_h} \left( \norm{ \sqrt{ \mu } \nabla v  }_{  \BrokenOmegah } + \norm{v}_{\BrokenOmegah }  
+ \norm{ \jump{ v } }_{\frac{1}{2},h,\Gamma_h } \right). 
\end{align*}
\end{lemm}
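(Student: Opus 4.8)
The plan is to split $A_h(v,w_h) = a_h(v,w_h) + N_h^c(v,w_h)$ and estimate the bulk and interface contributions separately, in each case applying Cauchy–Schwarz and then absorbing every factor that involves $w_h$ into $\norm{w_h}_{s^{\ast}_h}$. The two workhorse observations are that $\norm{w_h}_{s^{\ast}_h}^2 = \norm{\sqrt{\mu}\,\nabla w_h}_{\Omega}^2$ controls the full gradient of $w_h$, and that $w_h \in \curvedFes{\Theta}^{0}$ is a single globally $H^1$-conforming function vanishing on $\partial\Omega$. Since the sets $\BrokenOmegah$ and $\Omega$ agree up to the null interface and $\mu$ satisfies $0 < \mu_{\min} \le \mu_i \le \mu_{\max}$, Friedrichs' inequality then gives
\begin{equation*}
\norm{w_h}_{\BrokenOmegah} + \norm{\sqrt{\mu}\,\nabla w_h}_{\BrokenOmegah} = \norm{w_h}_{\Omega} + \norm{\sqrt{\mu}\,\nabla w_h}_{\BrokenOmegah} \lesssim \norm{\nabla w_h}_{\Omega} \lesssim \norm{w_h}_{s^{\ast}_h}.
\end{equation*}

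For the bulk form, Cauchy–Schwarz applied term by term in (\ref{eq:bfi_h_int}) yields $a_h(v,w_h) \lesssim \left( \norm{\sqrt{\mu}\,\nabla v}_{\BrokenOmegah} + \norm{v}_{\BrokenOmegah} \right)\left( \norm{\sqrt{\mu}\,\nabla w_h}_{\BrokenOmegah} + \norm{w_h}_{\BrokenOmegah} \right)$, where the constants $\mu_i$ combine into the weight $\sqrt{\mu}$ on each side because they are constant on each subdomain, and the factors $\abs{\rho_i}$ are swallowed into the hidden constant. By the display above the second factor is $\lesssim \norm{w_h}_{s^{\ast}_h}$, producing exactly the $\norm{\sqrt{\mu}\,\nabla v}_{\BrokenOmegah}$ and $\norm{v}_{\BrokenOmegah}$ contributions to the claimed bound.

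For the interface form I would first apply Cauchy–Schwarz on $\Gamma_h$ in the weighted trace norms (\ref{eq:snorms-IF}), exploiting that the weights $h^{1/2}$ and $h^{-1/2}$ are reciprocal:
\begin{equation*}
N_h^c(v,w_h) \le \norm{ \avg{\mu\nabla w_h}\cdot\normal_{\Gamma_h} }_{-\frac12,h,\Gamma_h}\, \norm{ \jump{v} }_{\frac12,h,\Gamma_h}.
\end{equation*}
It then remains to show $\norm{ \avg{\mu\nabla w_h}\cdot\normal_{\Gamma_h} }_{-\frac12,h,\Gamma_h} \lesssim \norm{w_h}_{s^{\ast}_h}$. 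As $\abs{\avg{\mu\nabla w_h}\cdot\normal_{\Gamma_h}} \lesssim \abs{\nabla w_h}$ pointwise on $\Gamma_h$, this reduces to the discrete gradient trace estimate $h\,\norm{\nabla w_h}_{\Gamma_h}^2 \lesssim \norm{\nabla w_h}_{\Omega}^2$, which I would establish elementwise on the cut cells: apply the trace inequality (\ref{eq:trace-inequality-dfm}) to each component of $\nabla w_h$ — each is of the admissible form $\tilde{v}\circ\Theta_h^{-1}$ with $\tilde{v}\in H^1(\el)$, so the inequality applies — and then use a second-order inverse estimate $\norm{D^2 w_h}_{\Theta_h(\el)} \lesssim h^{-1}\norm{\nabla w_h}_{\Theta_h(\el)}$ to absorb the resulting term $h\,\norm{D^2 w_h}_{\Theta_h(\el)}^2$ into $h^{-1}\norm{\nabla w_h}_{\Theta_h(\el)}^2$. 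Summing $\norm{\nabla w_h}_{\Theta_h(\Gamma_{\el})}^2 \lesssim h^{-1}\norm{\nabla w_h}_{\Theta_h(\el)}^2$ over $\el \in \Cutel$ and bounding by the global $\Omega$-integral gives the claim, and the factor $h^{1/2}$ turns it into the required $-\tfrac12$-norm bound. Collecting the bulk and interface estimates yields the assertion.

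I expect the only delicate point to be this gradient trace step. On curved cut elements (for $q\ge 2$) the field $\nabla w_h$ is no longer a polynomial, so the standard polynomial trace and inverse inequalities are not directly available and must be routed through the scaling relation (\ref{eq:scaling_p1-geom}) between $\Theta_h(\el)$ and the straight reference element $\el$ (equivalently, through the validity of (\ref{eq:trace-inequality-dfm}) for general $w = v\circ\Theta_h^{-1}$ with $v\in H^1(\el)$ noted in the excerpt, together with the second-order inverse estimate, which follows from the same scaling arguments and Lemma~\ref{lem:norm-equiv}). The bulk estimates and the Cauchy–Schwarz splitting on $\Gamma_h$ are routine.
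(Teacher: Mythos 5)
Your proposal is correct and follows essentially the same route as the paper's proof: Cauchy--Schwarz in the weighted $\pm\tfrac12$ trace norms for $N_h^c$, followed by the elementwise trace inequality (\ref{eq:trace-inequality-dfm}) applied to the components of $\nabla w_h$ and the inverse estimate (\ref{eq:inverse-ieq-gradient}) to obtain $h\norm{\nabla w_{h,i}}_{\Gamma_h}^2 \lesssim \norm{\nabla w_h}_{\Omega}^2 \lesssim \norm{w_h}_{s_h^{\ast}}^2$, with the bulk term handled by routine Cauchy--Schwarz and Friedrichs. Your explicit treatment of the bulk part and of the mapped-element subtlety (routing the trace and inverse inequalities through (\ref{eq:scaling_p1-geom}) for non-polynomial $\nabla w_h$ on curved elements, which the paper's remark after (\ref{eq:trace-inequality-dfm}) licenses) merely spells out what the paper leaves implicit.
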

\begin{proof}
	Using (\ref{eq:trace-inequality-dfm}), $w_h \in \curvedFes{\Theta}^{0}$ (in particular it is univalued and smooth inside elements) and finally (\ref{eq:inverse-ieq-gradient}) we obtain\footnote{Here, we denoted by $w_{h,i}$ the traces coming from different sides of the interface which may be different even for $w_h \in \curvedFes{\Theta}^{0}$, e.g. when the interface coincides with inter element boundaries.  }
\begin{align*}
	\sum\limits_{i=1,2} \sum\limits_{ \el \in  \Gamma_{\el} } h \norm{ \nabla w_{h,i}  }_{ \Theta_h(\Gamma_{\el})}^2 \lesssim \sum\limits_{\el \in \mesh} \left( \norm{\nabla w_h}^2_{\Theta_h(\el) } + h^2  \norm{\nabla w_h}^2_{ H^1(\Theta_h(\el)) }  \right) \lesssim \norm{w_h}_{s^{\ast}_h}^2. 
\end{align*}
Hence, the claim follows in view of 
\[
N_h^{c}(v, w_h) \lesssim \left( \sum\limits_{i=1,2} \sum\limits_{ \el \in  \Gamma_{\el} } h \norm{ \nabla w_{h,i}  }_{ \Theta_h(\Gamma_{\el})}^2 \right)^{1/2}  \norm{ \jump{ v } }_{\frac{1}{2},h,\Gamma_h }. 
\]
\end{proof}

\subsubsection{Geometry errors for bilinear and linear forms}\label{ssection:geom-bfi} 
We recall \cite[Lemma 5.10]{LR17} in which the geometric errors for the standard unfitted Nitsche formulation have been analyzed.
\begin{lemm}\label{lem:bil-PDE-geom-error-Christoph}
For $u \in  H^1(\BrokenOmega )$ and $w_h  \in \curvedFes{\Theta}^{0}$ set $\tilde{u}:= u \circ \Phi_h$ and $ \tilde{w}_h = w_h \circ \Phi_h^{-1}$. 
\begin{enumerate}[label=(\alph*)]
\item The geometrical consistency error for the bilinear form in the bulk is bounded by
\begin{align*}
	\abs{ a(u, \tilde{w}_h) - a_h(\tilde{u}, w_h ) } \lesssim h^q  \sum\limits_{i=1,2} \norm{ \tilde{u}_i }_{ H^1( \Omega_{i,h} ) }  \norm{ w_{h} }_{ H^1( \Omega ) } \lesssim h^q \norm{u}_{H^1(\BrokenOmega)}  \norm{w_h}_{ s_h^{\ast} }. 
\end{align*}
\item And for the linear form  
\begin{align*}
\abs{ \ell(\tilde{w}_h) - \ell_h( w_h )  } \lesssim h^q \norm{f}_{ W^{1,\infty}( \BrokenOmega ) }  \norm{ w_h }_{ \Omega }  \lesssim h^q \norm{u}_{ W^{3,\infty}( \BrokenOmega ) } \norm{w_h}_{ s_h^{\ast} }, 
\end{align*}
where the second inequality assumes that $u$ solves (\ref{eq:PDE_bulk}). 
\end{enumerate}
\end{lemm}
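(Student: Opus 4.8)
The plan is to reduce both statements to integrals over the single common domain $\Omega_{i,h}$ by the change of variables induced by $\Phi_h$. Since $\Phi_h$ maps $\Omega_{i,h}$ onto $\Omega_i$ with Jacobian $\dPhi = D\Phi_h$, the substitution $x = \Phi_h(y)$ will transform every integral appearing in $a(u,\tilde{w}_h)$ and $\ell(\tilde{w}_h)$ over $\Omega_i$ into an integral over $\Omega_{i,h}$ carrying a geometric factor that is a perturbation of the identity of size $h^q$ by (\ref{eq:DPhi_estimates}). Comparing with $a_h(\tilde{u},w_h)$ and $\ell_h(w_h)$, which are \emph{already} integrals over $\Omega_{i,h}$, the geometric errors should isolate into exactly these perturbation factors. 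Throughout I would use that $\det\dPhi > 0$ for $h$ small, so that $\abs{\det\dPhi} = \det\dPhi$.

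For part (a) I would treat the two contributions to $a$ separately. In the reaction term $-(\rho_i u_i, \tilde{w}_{h,i})_{\Omega_i}$, writing $u_i(x) = \tilde{u}_i(y)$ and $\tilde{w}_{h,i}(x) = w_{h,i}(y)$ the substitution produces only the Jacobian $\det\dPhi$, so its difference from $-(\rho_i \tilde{u}_i, w_{h,i})_{\Omega_{i,h}}$ equals $-\int_{\Omega_{i,h}} \rho_i \tilde{u}_i w_{h,i}(\det\dPhi - 1)\,\dY$, controlled by $h^q \norm{\tilde{u}_i}_{\Omega_{i,h}}\norm{w_{h,i}}_{\Omega_{i,h}}$ via the middle bound of (\ref{eq:DPhi_estimates}) and Cauchy--Schwarz. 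In the diffusion term the chain rule gives $\nabla_x u_i = \dPhi^{-T}\nabla_y \tilde{u}_i$ and $\nabla_x \tilde{w}_{h,i} = \dPhi^{-T}\nabla_y w_{h,i}$; combined with the Jacobian the geometric factor is $(\dPhi^{T}\dPhi)^{-1}\det\dPhi = I + C_{\Phi_h}$, so the difference from $(\mu_i \nabla\tilde{u}_i, \nabla w_{h,i})_{\Omega_{i,h}}$ is $\int_{\Omega_{i,h}} \mu_i (\nabla\tilde{u}_i)^{T} C_{\Phi_h}\,\nabla w_{h,i}\,\dY \lesssim h^q \norm{\nabla\tilde{u}_i}_{\Omega_{i,h}}\norm{\nabla w_{h,i}}_{\Omega_{i,h}}$ by the last bound of (\ref{eq:DPhi_estimates}). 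Summing over $i$ and using $\norm{w_{h,i}}_{H^1(\Omega_{i,h})} \leq \norm{w_h}_{H^1(\Omega)}$ would yield the first inequality; the second then follows from the norm equivalence $\norm{\tilde{u}_i}_{H^1(\Omega_{i,h})}\sim \norm{u_i}_{H^1(\Omega_i)}$ (Lemma~\ref{lem:norm-equiv}(a), whose first-order part needs only $u\in H^1$), together with Friedrich's inequality (recall $w_h\in\curvedFes{\Theta}^{0}$ vanishes on $\partial\Omega$) and $\mu_i$ bounded below to get $\norm{w_h}_{H^1(\Omega)}\lesssim \norm{w_h}_{s_h^{\ast}}$.

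For part (b) the same substitution turns $\int_{\Omega_i} f_i \tilde{w}_{h,i}\,\dX$ into $\int_{\Omega_{i,h}} (f_i\circ\Phi_h)\,w_{h,i}\,\det\dPhi\,\dY$. Because $\Phi_h(y)\in\Omega_i$ for $y\in\Omega_{i,h}$ and $f_{i,h} = \Ext_i f_i$ coincides with $f_i$ there, I may replace $f_i\circ\Phi_h$ by $f_{i,h}\circ\Phi_h$, which is defined on a full neighborhood. The integrand difference against $\ell_h$ is then governed by $f_{i,h}(\Phi_h(y))\det\dPhi - f_{i,h}(y)$, which I would split as $f_{i,h}(\Phi_h(y))(\det\dPhi - 1) + \bigl(f_{i,h}(\Phi_h(y)) - f_{i,h}(y)\bigr)$. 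The first summand is $\lesssim h^q \norm{f_{i,h}}_{W^{1,\infty}}$ by (\ref{eq:DPhi_estimates}); the second is $\lesssim h^{q+1}\norm{f_{i,h}}_{W^{1,\infty}}$ by the mean value theorem along the segment from $y$ to $\Phi_h(y)$ together with $\norm{\Phi_h - \ID}_{\infty,\Omega}\lesssim h^{q+1}$ from Lemma~\ref{lem:Trafo-Grad-bounds}. Integrating against $w_{h,i}$ with Cauchy--Schwarz, using $\abs{\Omega_{i,h}}\lesssim 1$ and continuity of the extension $\norm{f_{i,h}}_{W^{1,\infty}(\Omega_{i,h})}\lesssim\norm{f_i}_{W^{1,\infty}(\Omega_i)}$, and summing over $i$ gives the first inequality. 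The second follows since $f = \calL u$ yields $\norm{f}_{W^{1,\infty}(\BrokenOmega)}\lesssim \norm{u}_{W^{3,\infty}(\BrokenOmega)}$ (as $\calL$ is second order) and $\norm{w_h}_{\Omega}\lesssim \norm{w_h}_{s_h^{\ast}}$ as above.

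The only genuinely delicate point I anticipate is the diffusion term in part (a): one must correctly assemble the Jacobian and the two gradient transforms into the single factor $(\dPhi^{T}\dPhi)^{-1}\det\dPhi$ and recognize that its deviation from the identity is \emph{precisely} $C_{\Phi_h}$, the quantity for which (\ref{eq:DPhi_estimates}) supplies the $h^q$ bound. Everything else is a routine combination of Cauchy--Schwarz, the a priori estimates (\ref{eq:DPhi_estimates}), Lemma~\ref{lem:Trafo-Grad-bounds}, and the norm equivalence of Lemma~\ref{lem:norm-equiv}; this is also why the argument parallels the fitted-Nitsche analysis it is recalled from.
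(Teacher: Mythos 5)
Your proof is correct and is essentially the paper's approach: the paper does not prove this lemma itself but recalls it from \cite[Lemma 5.10]{LR17}, whose argument is exactly your change of variables $x = \Phi_h(y)$, with the diffusion term producing the factor $(\dPhi^T\dPhi)^{-1}\det(\dPhi) = I + C_{\Phi_h}$ — which is precisely why the paper extracts the bound $\norm{C_{\Phi_h}}_{\infty,\Omega} \lesssim h^q$ in (\ref{eq:DPhi_estimates}) from the proof of that cited lemma. Your handling of the remaining points (the $\det(\dPhi)-1$ factor for the reaction and linear forms, the Lipschitz estimate via $\norm{\Phi_h - \ID}_{\infty,\Omega} \lesssim h^{q+1}$ using the global extension $f_{i,h} = \Ext_i f_i$, and Friedrich's inequality to pass to $\norm{w_h}_{s_h^{\ast}}$) matches the standard argument as well.
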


\subsection{Unfitted interpolation}\label{ssection:unfitted-interp}

To derive error estimates in the next Section~\ref{section:error-analysis}, we need to be able to compare the exact solution of (\ref{eq:PDE_bulk})-(\ref{eq:normal_jump_IF}) with an approximant in our unfitted curved finite element space.
Since geometry data in applications often has moderate resolution, we focus here on the case of a possibly underresolved geometry. 
Hence, we introduce 
\begin{assum}
Let $ 1 \leq q \leq p$.
\end{assum}
Under this assumption it follows thanks to \cite[Theorem 3.12]{LR17} that $\Psi$ induces a family of (curved) finite elements in the sense of Bernardi \cite{B89} to which the interpolation theory developed in the latter paper can be applied. 
We then obtain from \cite[Corollary 4.1]{B89} the existence of an interpolation operator $\InterpBackgroundMesh:H^{\ell}(\Omega) \rightarrow \curvedFes{\Psi}$
with optimal approximation properties. For $v \in H^{\ell}(\Omega)$ it holds that 
\begin{equation}\label{eq:InterpBackgroundMesh}
\left(  \sum\limits_{ T \in \mesh }  \abs{ v - \InterpBackgroundMesh v  }_{  H^{m} (\Psi(T))  }^2 \right)^{1/2}  
	\lesssim h^{\ell-m} \norm{v}_{ H^{\ell}(\Omega)  } 
\end{equation}
for $ 0 \leq m \leq  \ell \leq q+1$.
To define an unfitted interpolation operator, we use standard techniques, see e.g. \cite{HH02,R08,LR17}.
We define 
\begin{equation}\label{eq:unfitted_interp}
\InterpUnfitted: L^2(\BrokenOmega) \rightarrow \curvedFes{\Psi}^{\Gamma}, \quad
\InterpUnfitted u = ( R_1^+ \InterpBackgroundMesh  \Ext_1 R_1 u, R_2^+ \InterpBackgroundMesh \Ext_2 R_2 u ).
\end{equation}
Sometimes we will refer to the components by $(\InterpUnfitted u)_i := R_i^+ \InterpBackgroundMesh \Ext_i R_i u$. Since  $(\InterpUnfitted u)_i$ will in the following only appear on domains which are part of of the active mesh $\mesh^i$ on which the restriction $R_i^+$ amounts to the identity, we will drop $R_i^+$  from now on to ease notation. 
We denote by $\InterpBackgroundMeshZ:H^{1}_0(\Omega) \rightarrow \curvedFes{\Psi}^{0}$ 
the interpolation operator into the standard (curved) spaces with homogeneous Dirichlet boundary conditions on $\partial \Omega$, see \cite[Theorem 5.1]{B89}. 
We then obtain the usual interpolation results.
\begin{lemm}[Unfitted interpolation]\label{lem:approx_unfitted}
Let $u \in H^{\ell}(\BrokenOmega) $ and $ 1 \leq m \leq  \ell \leq q+1$.  
\begin{enumerate}[label=(\alph*)]
\item We have: 
\[ \norm{  (u -  \InterpUnfitted u) \circ \Phi_h }_{ L^2( \BrokenOmegah ) } \sim \norm{ u -  \InterpUnfitted u }_{ L^2( \BrokenOmega ) } \lesssim   h^{\ell } \norm{ u }_{ H^{\ell }(\BrokenOmega) }. 
\] 
\item We have: 
\[ \norm{\nabla ( u \circ \Phi_h -  \InterpUnfitted u \circ \Phi_h ) }_{ L^2( \BrokenOmegah ) } \sim \norm{\nabla ( u -  \InterpUnfitted u ) }_{ L^2( \BrokenOmega ) } \lesssim   h^{\ell -1 } \norm{ u }_{ H^{\ell }(\BrokenOmega) }.
		\]
\item For $i=1,2$ it holds 
\[ \norm{  (\Ext_i R_i u - \InterpBackgroundMesh \Ext_i R_i u) \circ \Phi_h }_{\frac{1}{2},h,\Gamma_h } \lesssim  h^{\ell - 1 } \norm{ u }_{ H^{\ell }(\Omega_i) }. \]
\item For $i=1,2$ and $m \geq 2$ it holds 
\[
\norm{  \nabla (  \Ext_i R_i u - \InterpBackgroundMesh \Ext_i R_i u) \circ \Phi_h }_{-\frac{1}{2},h,\Gamma_h } \lesssim  h^{\ell -1 } \norm{ u }_{ H^{\ell }(\Omega_i ) } .
\]
\item For $i=1,2$ and $m \geq 2$ we have 
\[
\sum\limits_{ \el \in \mesh^{i} } h^2 \norm{ ( \Ext_i R_i u -  \InterpBackgroundMesh \Ext_i R_i u ) \circ \Phi_h }_{H^2( \Theta_h(T)) }^2  \lesssim  h^{2(\ell -1) } \norm{ u }_{ H^{\ell }(\Omega_i ) }^2 .
\]
\end{enumerate}
\end{lemm}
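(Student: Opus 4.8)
The plan is to reduce every estimate to two ingredients: a norm (or trace) equivalence that strips off the geometric map and transports the quantity from the $\Theta_h$/$\Gamma_h$ geometry back to the exact geometry, and the element-wise interpolation bound \eqref{eq:InterpBackgroundMesh} on the $\Psi$-geometry, applied to the interpolation error $w_i := \Ext_i R_i u - \InterpBackgroundMesh \Ext_i R_i u$. Throughout I use that $\Ext_i R_i u|_{\Omega_i} = u_i$, so that on $\Omega_i$ one has $u - \InterpUnfitted u = w_i$, together with the continuity $\norm{\Ext_i u_i}_{H^\ell(\Omega)} \lesssim \norm{u_i}_{H^\ell(\Omega_i)}$ of the extension \eqref{eq:def-Sobolev-Extension}, and that $\Omega_i \subset \bigcup_{T\in\mesh^i}\Psi(T)$, so partial sums over $\mesh^i$ are controlled by \eqref{eq:InterpBackgroundMesh}.

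For (a) and (b) the equivalences $\sim$ are exactly Lemma~\ref{lem:norm-equiv}(a) applied to $v = u - \InterpUnfitted u$, which carries the regularity required by that lemma since the interpolant is piecewise smooth. It then remains to bound the error on the exact geometry: I pass to the sum $\norm{w_i}^2_{\Omega_i}\leq\sum_{T\in\mesh^i}\norm{w_i}^2_{\Psi(T)}$, and likewise for $\nabla w_i$, and invoke \eqref{eq:InterpBackgroundMesh} with $m=0$, respectively $m=1$, together with continuity of $\Ext_i$, producing the rates $h^\ell$ and $h^{\ell-1}$.

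For (c) and (d) I first move from $\Gamma_h$ to the exact interface $\Gamma$ (via Lemma~\ref{lem:Deriv-IF}(a) and the surface-measure transformation), so that the quantities are $\sim\norm{w_i}_{\Gamma}$, respectively $\sim\norm{\nabla w_i}_{\Gamma}$. On each cut element a trace inequality of the type \eqref{eq:trace-inequality-dfm}, now on the $\Psi$-image (again a smooth perturbation of the reference element), yields $\norm{w_i}^2_{\Gamma\cap\Psi(T)}\lesssim h^{-1}\norm{w_i}^2_{\Psi(T)}+h\norm{\nabla w_i}^2_{\Psi(T)}$ and the analogous bound for $\nabla w_i$. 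Summing over $T\in\Cutel$ and inserting \eqref{eq:InterpBackgroundMesh} with $m\in\{0,1\}$ for (c), respectively $m\in\{1,2\}$ for (d), both contributions collapse to the same power; multiplying by the $h^{\mp1}$ coming from the $\pm\frac12$-norms one obtains $h^{\ell-1}\norm{u}_{H^\ell(\Omega_i)}$ in each case. The requirement $m\geq2$ in (d) is precisely what makes the $H^2$-interpolation estimate with $m=2$ admissible.

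The genuinely delicate statement is (e), where one must control the full $H^2(\Theta_h(T))$-norm of the pulled-back error $\tilde w_i := w_i\circ\Phi_h$ with only an $h^2$-prefactor to spare. I split the norm into its $L^2$, gradient and Hessian parts. The first two are handled element-wise by the scaling \eqref{eq:scaling_p1-geom} (valid for general transformed $H^1$-functions, here applied to $w_i\circ\Psi$), reducing them to $\norm{w_i}_{\Psi(T)}$ and $\norm{\nabla w_i}_{\Psi(T)}$; after \eqref{eq:InterpBackgroundMesh} and the $h^2$ factor these are harmless, of order $h^{2\ell}$ and $h^{2\ell}$. The Hessian term is where Lemma~\ref{lem:norm-equiv}(b) is indispensable: it bounds $\sum_{T\in\mesh^i}\int_{\Theta_h(T)}|\partial_{y_\nu}\partial_{y_\mu}\tilde w_i|^2\,\dY$ by $\sum_{|\alpha|\leq2}\sum_{T\in\mesh^i}\int_{\Psi(T)}|D_x^\alpha w_i|^2\,\dX$, mixing in lower derivatives of $w_i$. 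Applying \eqref{eq:InterpBackgroundMesh} for $m=0,1,2$, the smallest power arises from $m=2$, namely $h^{2(\ell-2)}$, and the prefactor $h^2$ restores exactly $h^{2(\ell-1)}$. The main obstacle is thus to verify that this dominant Hessian contribution, routed through the second-derivative equivalence of Lemma~\ref{lem:norm-equiv}(b), lands at the claimed rate; here too $m\geq2$ (i.e.\ $\ell\geq2$) is needed for the $H^2$-estimate to be available.
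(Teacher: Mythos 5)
Your proposal is correct and takes essentially the same route as the paper: for part (e) --- the only part the paper proves in detail --- you use exactly its two ingredients, namely the norm equivalence of Lemma~\ref{lem:norm-equiv} to transport the $H^2(\Theta_h(T))$-norm of the pulled-back error to the $H^2(\Psi(T))$-norm, followed by the interpolation bound (\ref{eq:InterpBackgroundMesh}) whose dominant $m=2$ contribution combines with the prefactor as $h^2\cdot h^{2(\ell-2)}=h^{2(\ell-1)}$, with continuity of the extension $\Ext_i$ yielding the norm on $\Omega_i$. For (a)--(d) the paper merely cites the standard literature (\cite{HH02,R08,LR17,LR18}), and your pull-back, cut-element trace inequality and $m\in\{0,1,2\}$ interpolation arguments are precisely those standard proofs, so there is no substantive deviation.
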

\begin{proof}
The proofs of statements (a)-(d) are standard, see \cite{HH02,R08} for the case 
of a piecewise linear reference geometry and \cite{LR17,LR18} for higher order 
involving the mapping.
We only show statement (e), which follows by applying Lemma~\ref{lem:norm-equiv} and the interpolation result (\ref{eq:InterpBackgroundMesh}): 
\begin{align*}
& \sum\limits_{ \el \in \mesh^{i} } h^2 \norm{ ( \Ext_i R_i u -  \InterpBackgroundMesh \Ext_i R_i u ) \circ \Phi_h }_{H^2( \Theta_h(T)) }^2  
 \lesssim \sum\limits_{ \el \in \mesh^{i} } h^2 \norm{  \Ext_i R_i u -  \InterpBackgroundMesh \Ext_i R_i u  }_{H^2( \Psi_h(T)) }^2 \\
& \lesssim h^{2(\ell -1) } \norm{ u }_{ H^{\ell }(\Omega_i ) }^2 \; \text{ for } 2 \leq m \leq \ell \leq q+1.
\end{align*}
\end{proof}
As a consequence of the interpolation bounds, the continuity result of Lemma~\ref{lem:Ah-continuity},
and the geometry error estimate from Lemma~\ref{lem:bil-PDE-geom-error-Christoph} we obtain:
\begin{lemm}\label{lem:lem:bil-PDE-geom-error-interp}
Let $u \in  H^{q+1}(\BrokenOmega )$ solve (\ref{eq:PDE_bulk})-(\ref{eq:normal_jump_IF}) and 
$w_h  \in \curvedFes{\Theta}^{0}$. 
Then
\[
\abs{ A(u, w_h \circ \Phi_h^{-1}) - A_h( \InterpUnfitted u \circ \Phi_h , w_h ) } \lesssim h^{q} \norm{u}_{H^{q+1}(\BrokenOmega)} \norm{w_h}_{ s_h^{\ast} }. 
\]
\end{lemm}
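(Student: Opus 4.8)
The plan is to exploit the continuity of $\Phi_h$ together with the interface condition $\jump{u}_{\Gamma}=0$ in order to make the Nitsche contributions disappear, so that the claim reduces to the bulk geometry estimate of Lemma~\ref{lem:bil-PDE-geom-error-Christoph}(a) combined with the continuity of $A_h$ from Lemma~\ref{lem:Ah-continuity}. Write $\tilde{u} := u \circ \Phi_h$ and $\tilde{w}_h := w_h \circ \Phi_h^{-1}$, and note first that $\InterpUnfitted u \circ \Phi_h \in \curvedFes{\Theta}^{\Gamma}$: since $\InterpUnfitted u = v_h \circ \Psi^{-1}$ with $v_h \in \FES{p}{\Gamma}$ and $\Phi_h = \Psi \circ \Theta_h^{-1}$, the map $\Psi^{-1}$ cancels and $\InterpUnfitted u \circ \Phi_h = v_h \circ \Theta_h^{-1}$. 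In particular $\eta := \tilde{u}-\InterpUnfitted u \circ \Phi_h \in H^1(\BrokenOmegah)+\curvedFes{\Theta}^{\Gamma}$, which is exactly the class in which Lemma~\ref{lem:Ah-continuity} operates.

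First I would record that both interface forms vanish on the relevant arguments. As $u$ solves the interface problem, $\jump{u}_{\Gamma}=0$, hence $N^{c}(u,\tilde{w}_h)=0$ and so $A(u,\tilde{w}_h)=a(u,\tilde{w}_h)$. Likewise, because $\Phi_h$ is continuous on $\Omega$ and maps $\Gamma_h$ onto $\Gamma$, the trace of $\tilde{u}$ from the $\Omega_{i,h}$ side at $x\in\Gamma_h$ equals $u_i\big(\Phi_h(x)\big)$ with $\Phi_h(x)\in\Gamma$; thus $\jump{\tilde{u}}_{\Gamma_h}=\jump{u}_{\Gamma}\circ\Phi_h=0$, giving $N_h^{c}(\tilde{u},w_h)=0$ and $A_h(\tilde{u},w_h)=a_h(\tilde{u},w_h)$.

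Next I would insert the intermediate quantity $a_h(\tilde{u},w_h)$ and split, using the two identities just derived,
\[
A(u,\tilde{w}_h) - A_h(\InterpUnfitted u\circ\Phi_h,w_h)
= \big[\,a(u,\tilde{w}_h)-a_h(\tilde{u},w_h)\,\big] + A_h\big(\eta,\,w_h\big),
\]
where in the second term I re-assembled the full form via $a_h(\tilde{u},w_h)=A_h(\tilde{u},w_h)$ and then used linearity. The first bracket is precisely the bulk geometric consistency error, bounded by Lemma~\ref{lem:bil-PDE-geom-error-Christoph}(a) by $h^q\norm{u}_{H^1(\BrokenOmega)}\norm{w_h}_{s_h^{\ast}}$.

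Finally I would apply Lemma~\ref{lem:Ah-continuity} to $A_h(\eta,w_h)$ and control the three resulting norms with the unfitted interpolation bounds of Lemma~\ref{lem:approx_unfitted} at $\ell=q+1$: parts (a) and (b) give $\norm{\eta}_{\BrokenOmegah}\lesssim h^{q+1}\norm{u}_{H^{q+1}(\BrokenOmega)}$ and $\norm{\sqrt{\mu}\nabla\eta}_{\BrokenOmegah}\lesssim h^{q}\norm{u}_{H^{q+1}(\BrokenOmega)}$, while the jump term is handled componentwise via $\norm{\jump{\eta}}_{\frac{1}{2},h,\Gamma_h}\le\sum_{i}\norm{\eta_i}_{\frac{1}{2},h,\Gamma_h}$ and part (c), again at rate $h^{q}$. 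Summing the two contributions and absorbing $\norm{u}_{H^1(\BrokenOmega)}\le\norm{u}_{H^{q+1}(\BrokenOmega)}$ yields the assertion. I expect the only genuine subtlety to be the bookkeeping certifying that near $\Gamma_h$ one has $\eta_i = (\Ext_i R_i u - \InterpBackgroundMesh\Ext_i R_i u)\circ\Phi_h$, so that the trace estimate Lemma~\ref{lem:approx_unfitted}(c) applies verbatim; everything else is a routine triangle-inequality argument once the two Nitsche forms have been shown to vanish.
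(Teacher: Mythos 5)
Your proposal is correct and follows essentially the same route as the paper's own proof: the same observation that both Nitsche forms $N^{c}(u,\tilde{w}_h)$ and $N_h^{c}(\tilde{u},w_h)$ vanish (using $\jump{u}_{\Gamma}=0$ and $\Phi_h(\Gamma_h)=\Gamma$, together with $(\Ext_i R_i u)\circ\Phi_h = u_i\circ\Phi_h$ on $\Omega_{i,h}$ and $\Gamma_h$), the same two-term split into the bulk geometric consistency error $a(u,\tilde{w}_h)-a_h(\tilde{u},w_h)$ handled by Lemma~\ref{lem:bil-PDE-geom-error-Christoph}~(a) and the interpolation remainder $A_h(\Ext u\circ\Phi_h-\InterpUnfitted u\circ\Phi_h, w_h)$ handled by the continuity estimate of Lemma~\ref{lem:Ah-continuity} combined with Lemma~\ref{lem:approx_unfitted} at $\ell=q+1$. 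The only cosmetic difference is that you spell out the identification $\InterpUnfitted u\circ\Phi_h = v_h\circ\Theta_h^{-1}\in\curvedFes{\Theta}^{\Gamma}$ and the componentwise reduction of the jump term, which the paper leaves implicit.
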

\begin{proof}
Since $\Phi_h( \Omega_{i,h} ) = \Omega_i$ we have $ (\Ext_i R_i u) \circ \Phi_h |_{ \Omega_{i,h} } = (R_i u) \circ \Phi_h |_{ \Omega_{i,h} }$ and similarly on $\Gamma_h$. Using this along with $N^{c}(u, w_h \circ \Phi_h^{-1}) = N_h^{c}( u \circ \Phi_h , w_h ) = 0$ 
	for $u$ solving (\ref{eq:jump_IF}) yields
\begin{align*}
& \abs{ A(u, w_h \circ \Phi_h^{-1}) - A_h( \InterpUnfitted u \circ \Phi_h , w_h ) }  \\
& \leq \abs{ a(u, w_h \circ \Phi_h^{-1}) - a_h( u \circ \Phi_h , w_h ) }
  + \abs{ A_h( \Ext u \circ \Phi_h -   \InterpUnfitted u \circ \Phi_h , w_h ) } \\ 
 &  := \mathrm{I}_1 + \mathrm{I}_2. 
\end{align*}
By Lemma~\ref{lem:bil-PDE-geom-error-Christoph} we have $\mathrm{I}_1 \lesssim h^q \norm{u}_{ H^1(\BrokenOmega ) } \norm{w_h}_{ s_h^{\ast} }   $. 
To estimate $\mathrm{I}_2$ we use continuity of $A_h(\cdot,\cdot)$, see Lemma~\ref{lem:Ah-continuity}, and the interpolation estimates from Lemma~\ref{lem:approx_unfitted}:  
\begin{align*}
	\mathrm{I}_2 & \lesssim \norm{w_h}_{ s_h^{\ast} } \Big( \sum\limits_{i=1,2} \norm{    (\Ext_i R_i u  - \InterpBackgroundMesh  \Ext_i R_i u) \circ \Phi_h ) }_{ H^1( \Omega_{i,h } ) } \\
	& + \norm{  (\Ext_i R_i u - \InterpBackgroundMesh \Ext_i R_i u) \circ \Phi_h }_{\frac{1}{2},h,\Gamma_h }  \Big) \lesssim h^q \norm{w_h}_{ s_h^{\ast} } \norm{ u }_{ H^{q+1}(\BrokenOmega ) }.
\end{align*} 
\end{proof}
We now derive several results related to the consistency of the stabilization.
By this we mean that if $u$ solves (\ref{eq:PDE_bulk})-(\ref{eq:normal_jump_IF}),
then inserting the interpolant $\InterpUnfitted u \circ \Phi_h $ into the stabilization terms 
should at most lead to an error of $\mathcal{O}(h^q)$. 
The first lemma deals with the terms on the interface.
\begin{lemm}[Consistency of interface stabilization]\label{lem:IF-stab-interp}
Let $u \in H^{q+1}(\BrokenOmega)$ solve (\ref{eq:PDE_bulk})-(\ref{eq:normal_jump_IF}). Then 
\begin{enumerate}[label=(\alph*)]
\item $ \StabNablaNormal{\InterpUnfitted u \circ \Phi_h }{ \InterpUnfitted u \circ \Phi_h }^{1/2} \lesssim h^q \norm{u}_{ H^{q+1}(\BrokenOmega) }. $  
\item $ \StabNablaGamma{\InterpUnfitted u \circ \Phi_h }{ \InterpUnfitted u \circ \Phi_h }^{1/2} \lesssim h^q \norm{u}_{ H^{q+1}(\BrokenOmega) }. $  
\item $ \StabNitsche{\InterpUnfitted u \circ \Phi_h }{ \InterpUnfitted u \circ \Phi_h }^{1/2} \lesssim h^q \norm{u}_{ H^{q+1}(\BrokenOmega) }. $ 
\end{enumerate}
\end{lemm}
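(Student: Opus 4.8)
The plan is to write $w_h := \InterpUnfitted u\circ\Phi_h$ with components $w_{h,i}=(\InterpBackgroundMesh \Ext_i R_i u)\circ\Phi_h$ and to split each component as $w_{h,i}=\tilde v_i+\tilde\eta_i$, where $v_i:=\Ext_i R_i u$, the pullback $\tilde v_i:=v_i\circ\Phi_h$ carries the smooth exact part, and $\tilde\eta_i:=(\InterpBackgroundMesh v_i - v_i)\circ\Phi_h$ is the pulled-back interpolation error. The interpolation-error contribution is treated identically in all three estimates: since the normal and tangential projections are contractions, the relevant boundary seminorms of $\tilde\eta_i$ are all controlled by the full gradient, and Lemma~\ref{lem:approx_unfitted}(c)--(d) (with $\ell=q+1$, $m=2$, using $q\geq1$) yields $\norm{\tilde\eta_i}_{\frac12,h,\Gamma_h}+\norm{\nabla\tilde\eta_i}_{-\frac12,h,\Gamma_h}\lesssim h^q\norm{u}_{H^{q+1}(\Omega_i)}$. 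Hence everything reduces to showing that the exact part $\tilde v_i$ contributes at most $\mathcal{O}(h^q)$ to each stabilization term, and this is where the interface conditions on $u$ enter.

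For parts (c) and (b) the exact part drops out entirely. Using $\StabNitsche{w_h}{w_h}^{1/2}=\sqrt{\bar{\mu}}\,\norm{\jump{w_h}_{\Gamma_h}}_{\frac12,h,\Gamma_h}$ and $\StabNablaGamma{w_h}{w_h}^{1/2}=\sqrt{\bar{\mu}}\,\norm{\jump{\nabla_{\Gamma_h}w_h}_{\Gamma_h}}_{-\frac12,h,\Gamma_h}$, I observe that continuity of $u$ across the interface, \eqref{eq:jump_IF}, gives $v_1=v_2$ on $\Gamma$; since $\Phi_h(\Gamma_h)=\Gamma$, this forces $\tilde v_1=\tilde v_2$ identically on $\Gamma_h$. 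Consequently the jump $\jump{\tilde v}_{\Gamma_h}$ vanishes (handling (c)), and, because the surface gradient depends only on the trace on $\Gamma_h$, the tangential jump $\jump{\nabla_{\Gamma_h}\tilde v}_{\Gamma_h}$ vanishes as well (handling (b)). What remains in both cases is exactly the interpolation-error contribution bounded above, so (b) and (c) follow immediately.

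The genuine work is in part (a), where a normal derivative is involved and the exact part does not cancel. Writing $g_i:=\mu_i\nabla v_i$ for the continuous flux field, the chain rule gives $\mu_i\nabla\tilde v_i=\dPhi^{T}(g_i\circ\Phi_h)$, hence on $\Gamma_h$
\[
\jump{\mu\nabla\tilde v}_{\Gamma_h}\cdot\normal_{\Gamma_h}=(\jump{g}_{\Gamma}\circ\Phi_h)\cdot(\dPhi\,\normal_{\Gamma_h}),\qquad \jump{g}_\Gamma:=g_1-g_2.
\]
The flux condition \eqref{eq:normal_jump_IF} states $\jump{g}_\Gamma\cdot\normal_\Gamma=0$ on $\Gamma$, so the pulled-back vector $J:=\jump{g}_\Gamma\circ\Phi_h$ is tangent to $\Gamma$, i.e. orthogonal to $\normal_\Gamma\circ\Phi_h$. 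I would therefore replace $\dPhi\normal_{\Gamma_h}$ by its tangential part, $J\cdot(\dPhi\normal_{\Gamma_h})=J\cdot(\Ptan{\Gamma}\circ\Phi_h)(\dPhi\normal_{\Gamma_h})$, and estimate that tangential part: decomposing $\dPhi\normal_{\Gamma_h}=\normal_{\Gamma_h}+(\dPhi-I)\normal_{\Gamma_h}$, the bound $\norm{\dPhi-I}_{\infty,\Omega}\lesssim h^q$ from \eqref{eq:Bound-Phi-Grad} together with Lemma~\ref{lem:normal_tan_proj}(b) (giving $\norm{\normal_{\Gamma_h}-\normal_\Gamma\circ\Phi_h}\lesssim h^q$ and $(\Ptan{\Gamma}\circ\Phi_h)(\normal_\Gamma\circ\Phi_h)=0$) yields $\abs{(\Ptan{\Gamma}\circ\Phi_h)(\dPhi\normal_{\Gamma_h})}\lesssim h^q$ pointwise on $\Gamma_h$. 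Thus $\abs{\jump{\mu\nabla\tilde v}_{\Gamma_h}\cdot\normal_{\Gamma_h}}\lesssim h^q\abs{J}$, and integrating, using the surface norm equivalence of Lemma~\ref{lem:Deriv-IF}(a) and a trace estimate $\norm{\jump{\mu\nabla u}_\Gamma}_\Gamma\lesssim\norm{u}_{H^{q+1}(\BrokenOmega)}$, gives $\norm{\jump{\mu\nabla\tilde v}_{\Gamma_h}\cdot\normal_{\Gamma_h}}_{-\frac12,h,\Gamma_h}\lesssim h^{q+1/2}\norm{u}_{H^{q+1}(\BrokenOmega)}$; adding the interpolation-error term completes (a).

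The main obstacle is precisely this last step: unlike (b) and (c), the normal flux jump does not vanish pointwise after pullback, so one cannot simply discard the exact part. The crux is recognising that only the \emph{normal} component of the flux jump is controlled by \eqref{eq:normal_jump_IF}, so one must pair the tangentiality of $J$ against the fact that $\dPhi\normal_{\Gamma_h}$ is normal to $\Gamma$ up to $\mathcal{O}(h^q)$ --- which is where the geometric perturbation bounds, Lemma~\ref{lem:normal_tan_proj} and \eqref{eq:Bound-Phi-Grad}, do the real work. Care is also needed to confirm that $g_i\circ\Phi_h$ restricted to $\Gamma_h$ reproduces the exact flux $\mu_i\nabla u_i$ on $\Gamma$, which follows from $(\Ext_i R_i u)|_{\Omega_i}=u_i$ and $\Phi_h(\Gamma_h)=\Gamma$.
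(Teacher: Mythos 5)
Your proof is correct, but it takes a genuinely different route from the paper's in parts (a) and (b). The paper applies the packaged transfer estimates of Lemma~\ref{lem:Deriv-IF}(b),(c) to the \emph{whole} interpolant $\InterpUnfitted u \circ \Phi_h$: it moves the normal (resp.\ tangential) jump from $\Gamma_h$ to $\Gamma$ at the cost of a geometric term $h^{q+\frac12}\sum_i \norm{\nabla (\InterpUnfitted u)_i}_{\Gamma}$, uses the interface conditions $\jump{\mu\nabla u}_{\Gamma}\cdot \normal_{\Gamma}=0$ and $\jump{\nabla_{\Gamma} u}_{\Gamma}=0$ (the latter obtained by tangentially differentiating \eqref{eq:jump_IF}) on the \emph{exact} interface, and then transfers the remaining interpolation error back via Lemma~\ref{lem:Deriv-IF}(a) and Lemma~\ref{lem:approx_unfitted}(d). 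You instead split off the exact part $\Ext u \circ \Phi_h$ first. For (b) this buys a real simplification: since $\jump{\Ext u \circ \Phi_h}_{\Gamma_h}=0$ identically and the surface gradient is intrinsic to the trace, the exact tangential jump vanishes on $\Gamma_h$ itself, so neither the transfer lemma nor the paper's $\mathrm{I}_1$-type term is needed — the paper exploits this exact cancellation only in part (c), where your argument coincides with its proof. For (a), your pointwise computation — the pulled-back flux jump $J$ is tangent to $\Gamma$ by \eqref{eq:normal_jump_IF}, while $\dPhi\,\normal_{\Gamma_h}$ is normal to $\Gamma$ up to $\mathcal{O}(h^q)$ by Lemma~\ref{lem:normal_tan_proj}(b) and \eqref{eq:Bound-Phi-Grad} — is essentially an inlined, specialized rederivation of the mechanism behind Lemma~\ref{lem:Deriv-IF}(b) (compare its proof in Appendix~\ref{B:geom}), and it even yields the sharper rate $h^{q+\frac12}$ for the exact part. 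The trade-off is minor: the paper must bound the trace $\norm{\nabla(\InterpUnfitted u)_i}_{\Gamma}$ of the interpolant (its term $\mathrm{I}_1$), whereas you need the trace estimate $\norm{\jump{\mu\nabla u}_{\Gamma}}_{\Gamma}\lesssim \norm{u}_{H^{q+1}(\BrokenOmega)}$, which is valid since $q\geq 1$ gives $u\in H^2(\BrokenOmega)$; the treatment of the interpolation error via Lemma~\ref{lem:approx_unfitted}(c)--(d) is the same in both arguments.
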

\begin{proof} \hfill
\begin{enumerate}[label=(\alph*)]
\item From Lemma~\ref{lem:Deriv-IF} (b) and $ \jump{ \mu \nabla u }_{\Gamma} \cdot \normal_{\Gamma}  =  \jump{ \mu \nabla \Ext u }_{\Gamma} \cdot \normal_{\Gamma} = 0$ we obtain 
\begin{align*}
& \StabNablaNormal{ \InterpUnfitted u \circ \Phi_h }{ \InterpUnfitted u \circ \Phi_h } = h \norm{ \jump{ \mu \nabla  \InterpUnfitted u \circ \Phi_h } \cdot \normal_{ \Gamma_h }   }_{ \Gamma_h }^2   \\ 
	& \lesssim h^{2(q+\frac{1}{2})} \sum\limits_{i=1,2} \norm{ \nabla (\InterpUnfitted u)_i }_{\Gamma}^2 + h \norm{ \jump{ \mu \nabla ( \InterpUnfitted u - \Ext u) } \cdot \normal_{ \Gamma }   }_{ \Gamma }^2  \\ 
& \lesssim  h^{2(q+\frac{1}{2})} \sum\limits_{i=1,2} \norm{ \nabla (\InterpUnfitted u)_i }_{\Gamma}^2 + h \sum\limits_{i=1,2} \norm{  \nabla ( \InterpUnfitted u - \Ext u)_i  }_{ \Gamma }^2 
  := \mathrm{I}_1 + \mathrm{I}_2.  
\end{align*}
From Lemma~\ref{lem:Deriv-IF} (a) and Lemma~\ref{lem:approx_unfitted} (d) we obtain 
\begin{align*}
\mathrm{I}_2 \lesssim \sum\limits_{i=1,2}  \norm{ \nabla (  \Ext_i R_i u - \InterpBackgroundMesh \Ext_i R_i u) \circ \Phi_h }_{-\frac{1}{2},h,\Gamma_h }^2 
\lesssim h^{2q} \norm{u}_{ H^{q+1}(\BrokenOmega) }^2.
\end{align*} 
Similarly, $ \mathrm{I}_1 \lesssim h^{2 (q+\frac{1}{2}) } \norm{u}_{ H^{q+1}(\BrokenOmega) }^2$, which concludes the proof of (a).
\item Since $u \in H^{q+1}(\BrokenOmega)$ with $q \geq 1$ we are allowed to take the tangential derivative of equation (\ref{eq:jump_IF}) to obtain $\jump{ \nabla_{\Gamma} u}_{\Gamma} = 0$. 
Then using Lemma~\ref{lem:Deriv-IF} (c) yields
\begin{align*}
& \StabNablaGamma{\InterpUnfitted u \circ \Phi_h }{ \InterpUnfitted u \circ \Phi_h } = h \bar{\mu}  \norm{ \jump{ \nabla_{ \Gamma_h }  \InterpUnfitted u \circ \Phi_h } }_{ \Gamma_h }^2  \\
	& \lesssim  h^{2(q+\frac{1}{2})} \sum\limits_{i=1,2} \norm{ \nabla (\InterpUnfitted u)_i }_{\Gamma}^2 + h \norm{ \jump{ \nabla_{ \Gamma } \InterpUnfitted u } }_{ \Gamma }^2 \\
	& \sim h^{2(q+ \frac{1}{2})} \sum\limits_{i=1,2} \norm{ \nabla (\InterpUnfitted u)_i }_{\Gamma}^2 + h \norm{ \jump{ \nabla_{ \Gamma } (\InterpUnfitted u - \Ext u ) } }_{ \Gamma }^2 
 \lesssim \mathrm{I}_1 + \mathrm{I}_2.
\end{align*}
The terms $\mathrm{I}_1$ and $ \mathrm{I}_2$ are the same as the ones already estimated in (a).
\item As $\Phi_h(\Gamma_h) = \Gamma$ we have $0= \jump{u}_{\Gamma} = \jump{ \Ext u}_{\Gamma} =  \jump{ \Ext u \circ \Phi_h}_{\Gamma_h}  $. 
This allows to skip the transformation step to the exact interface that was necessary in part (a)-(b). 
Hence, by applying Lemma~\ref{lem:approx_unfitted} (c): 
\begin{align*}
& \StabNitsche{ \InterpUnfitted u \circ \Phi_h }{ \InterpUnfitted u \circ \Phi_h } =
\frac{\bar{\mu}}{ h } \norm{ \jump{  \InterpUnfitted u \circ \Phi_h   }_{\Gamma_h }  }_{ \Gamma_h }^2  =  \frac{\bar{\mu}}{ h }  \norm{ \jump{  \InterpUnfitted u \circ \Phi_h - \Ext u \circ \Phi_h  }_{\Gamma_h }  }_{ \Gamma_h }^2 \\
& \lesssim \sum\limits_{i=1,2} \norm{  (\Ext_i R_i u - \InterpBackgroundMesh \Ext_i R_i u) \circ \Phi_h }_{\frac{1}{2},h,\Gamma_h }^2 
  \lesssim h^{2q} \norm{u}_{ H^{q+1}(\BrokenOmega) }^2.
\end{align*}
\end{enumerate}
\end{proof} 
Next we consider the least squares term.
\begin{lemm}[Least-squares term]\label{lem:GLS}
	Let $u \in H^{q+1}(\BrokenOmega) \cap W^{3,\infty}(\BrokenOmega) $ solve (\ref{eq:PDE_bulk})-(\ref{eq:normal_jump_IF}). 
Then it holds that 
\[
\sum\limits_{i=1,2} \sum\limits_{ \el \in \mesh^{i} } h^2 \norm{ f_{i,h} - \calL(  \InterpUnfitted  u \circ  \Phi_h )   }^2_{ \Theta_h(T_i) } 
	\lesssim h^{2q} \norm{u}_{q}^2.  
\]
\end{lemm}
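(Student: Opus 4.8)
The plan is to compare $\calL$ of the interpolant against $f_{i,h}$ elementwise, splitting the error into a routine interpolation part and a purely geometric part, and to estimate each with the pullback and interpolation results of Section~\ref{section:IF-iso}. I would write $v_i := \Ext_i R_i u$, so that $v_i|_{\Omega_i}=u_i$, $v_i \in H^{q+1}(\Omega)\cap W^{3,\infty}(\Omega)$ by continuity of the extension, and $(\InterpUnfitted u \circ \Phi_h)_i = (\InterpBackgroundMesh v_i)\circ\Phi_h$. Since $\calL u_i = f_i$ on $\Omega_i$, on each $\Theta_h(\el_i)$ I would decompose the integrand as
\[
f_{i,h} - \calL\big((\InterpBackgroundMesh v_i)\circ\Phi_h\big) = \underbrace{\big[\,f_{i,h} - \calL(v_i\circ\Phi_h)\,\big]}_{=:E_1} + \underbrace{\calL\big((v_i - \InterpBackgroundMesh v_i)\circ\Phi_h\big)}_{=:E_2}.
\]

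The term $E_2$ is the interpolation contribution. Using $\norm{\calL\psi}_{\Theta_h(\el_i)}\lesssim\norm{\psi}_{H^2(\Theta_h(\el))}$ (the zeroth- and second-order parts of $\calL$ are both controlled by the $H^2$-norm, with constant depending on $\mu_i,\rho_i$) together with $\Theta_h(\el_i)\subseteq\Theta_h(\el)$, I would reduce the sum to $\sum_i\sum_{\el\in\mesh^i}h^2\norm{(v_i-\InterpBackgroundMesh v_i)\circ\Phi_h}_{H^2(\Theta_h(\el))}^2$ and invoke Lemma~\ref{lem:approx_unfitted}(e) with $\ell=q+1$, which yields $\lesssim h^{2q}\norm{u}_{H^{q+1}(\BrokenOmega)}^2$.

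The geometric term $E_1$ is where the work lies; I would split it further as $E_1 = A + B$ with $A := f_{i,h}-(\calL v_i)\circ\Phi_h$ and $B := (\calL v_i)\circ\Phi_h - \calL(v_i\circ\Phi_h)$. For $B$, the key observation is that the zeroth-order parts $-\rho_i(v_i\circ\Phi_h)$ cancel, so $B = -\mu_i\sum_{\nu}\big[(\partial_\nu^2 v_i)\circ\Phi_h - \partial_\nu^2(v_i\circ\Phi_h)\big]$ is exactly the commutator of the Laplacian with composition by $\Phi_h$. Changing variables to $\Psi(\el)=\Phi_h(\Theta_h(\el))$ using $\abs{\det(\dPhi)}\sim 1$ from (\ref{eq:DPhi_estimates}) turns $\norm{B}_{\Theta_h(\el_i)}$ into the quantity bounded by Lemma~\ref{lem:pullback}(d), giving $\norm{B}_{\Theta_h(\el_i)}\lesssim h^q\norm{v_i}_{H^2(\Psi(\el))} + h^{q-1}\norm{\nabla v_i}_{\Psi(\el)}$; after multiplying by $h^2$ and summing, the worst surviving term is $h^{2q}\norm{\nabla v_i}_{\Omega}^2\lesssim h^{2q}\norm{u}_{H^{q+1}(\BrokenOmega)}^2$. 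For $A$, I would add and subtract $f_{i,h}\circ\Phi_h$ and exploit $\Phi_h(\Omega_{i,h})\subseteq\Omega_i$: on $\Omega_i$ both extensions $f_{i,h}=\Ext_i f_i$ and $\calL v_i = \calL\Ext_i u_i$ equal $f_i$, hence $(f_{i,h}-\calL v_i)\circ\Phi_h$ vanishes on $\Omega_{i,h}\supseteq\Theta_h(\el_i)$, leaving $A = f_{i,h} - f_{i,h}\circ\Phi_h$. The analogue of Lemma~\ref{lem:pullback}(a) with $\Phi_h$ in place of $\Phi_h^{-1}$ (legitimate since both maps are $\mathcal{O}(h^{q+1})$ perturbations of the identity by Lemma~\ref{lem:Trafo-Grad-bounds}) together with $\norm{f_{i,h}}_{W^{1,\infty}(\Omega_{i,h})}\lesssim\norm{u}_{W^{3,\infty}(\BrokenOmega)}$ then give $\norm{A}_{\Theta_h(\el_i)}\lesssim h^{q+1}h^{d/2}\norm{u}_{W^{3,\infty}(\BrokenOmega)}$, a strictly higher-order contribution. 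Collecting $E_2$, $B$ and $A$ produces the claimed $\lesssim h^{2q}\norm{u}_q^2$.

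The main obstacle I anticipate is term $B$: one must recognise that the genuinely problematic part of $\calL$ is only its second-order principal part (the reaction term cancelling exactly under the commutator), and that the sharp pullback estimate Lemma~\ref{lem:pullback}(d), with its $h^{q-1}\norm{\nabla f}$ summand, is precisely strong enough — once weighted by the $h^2$ of the Galerkin-least-squares term — to reach $\mathcal{O}(h^{2q})$ and no better. The second delicate point is the extension-agreement argument for $A$, namely that $f_{i,h}=\Ext_i f_i$ and $\calL\Ext_i u_i$ are \emph{two different} extensions of $f_i$ which nevertheless coincide on $\Omega_i$; this is exactly what makes the otherwise dangerous geometric-consistency part of $A$ vanish identically on the discrete subdomain $\Omega_{i,h}$.
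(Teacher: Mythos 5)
Your proof is correct and follows essentially the same route as the paper: both rest on the extension-agreement identity $f_{i,h} = \calL\,\Ext_i R_i u$ on $\Psi(\el_i)\subset\Omega_i$, on Lemma~\ref{lem:approx_unfitted}~(e) for the interpolation part, and on Lemma~\ref{lem:pullback}~(a) and~(d) for the data-extension mismatch and the Laplacian commutator, respectively. The only (cosmetic) difference is that you split the residual on the discrete geometry and transform each piece separately, whereas the paper first pulls the whole term back to $\Psi(\el_i)$ via Lemma~\ref{lem:norm-equiv}~(a) and then performs the identical splitting into $\mathrm{J}_1$ and $\mathrm{J}_2$.
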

\begin{proof}
We have  
\begin{align*}
& \sum\limits_{i=1,2} \sum\limits_{ \el \in \mesh^{i} } h^2 \norm{ f_{i,h} - \calL(  \InterpUnfitted  u \circ  \Phi_h )   }^2_{ \Theta_h(T_i) } \lesssim \sum\limits_{i=1,2} \sum\limits_{ \el \in \mesh^{i} } h^2 \norm{ f_{i,h} - \calL(  \Ext_i u_i  \circ  \Phi_h )   }^2_{ \Theta_h(T_i) }   \\ 
&  
+ \sum\limits_{i=1,2} \sum\limits_{ \el \in \mesh^{i} } h^2 \norm{  \calL(  ( \Ext_i R_i u   -  \InterpBackgroundMesh \Ext_i R_i u)  \circ  \Phi_h )   }^2_{ \Theta_h(T) }  \\
& \lesssim \sum\limits_{i=1,2} \sum\limits_{ \el \in \mesh^{i} } h^2 \norm{ f_{i,h} - \calL(  \Ext_i u_i  \circ  \Phi_h )   }^2_{ \Theta_h(T_i) } 
	+ h^{2q} \norm{u}_{ H^{q+1}(\BrokenOmega) }^2,
\end{align*}
where Lemma~\ref{lem:approx_unfitted} (e) has been applied. To treat the remaining term further, we employ Lemma~\ref{lem:norm-equiv} (a) and the triangle inequality to split
\begin{align*}
& \sum\limits_{i=1,2} \sum\limits_{ \el \in \mesh^{i} } h^2 \norm{ f_{i,h} - \calL(  \Ext_i u_i  \circ  \Phi_h )   }^2_{ \Theta_h(T_i) }   \\ 
& \lesssim \sum\limits_{i=1,2} \sum\limits_{ \el \in \mesh^{i} } h^2 \norm{ f_{i,h} \circ \Phi_h^{-1} - \calL(  \Ext_i u_i  \circ  \Phi_h ) \circ \Phi_h^{-1}  }^2_{ \Psi_h(T_i) } \\
& \lesssim  \sum\limits_{i=1,2} \sum\limits_{ \el \in \mesh^{i} } h^2 \norm{ f_{i,h} \circ \Phi_h^{-1} - f_{i,h}  }^2_{ \Psi_h(T_i) }
+ \sum\limits_{i=1,2} \sum\limits_{ \el \in \mesh^{i} } h^2 \norm{ f_{i,h}  -  \calL(  \Ext_i u_i  \circ  \Phi_h ) \circ \Phi_h^{-1}   }^2_{ \Psi_h(T_i) } \\
& := \mathrm{J}_1 + \mathrm{J}_2. 
\end{align*}
From Lemma~\ref{lem:pullback} (a) we obtain
\[
\mathrm{J}_1 \lesssim h^{2(q+2)} \sum\limits_{i=1,2} \norm{ \nabla f_{i,h} }_{\infty,\Omega }^2 
\lesssim h^{2(q+2)} \norm{ f }_{W^{1,\infty}( \BrokenOmega ) }^2  
\lesssim h^{2(q+2)} \norm{u}_{ W^{3,\infty}( \BrokenOmega ) }^2 
\]
by our smoothness assumption on the data extension. 
To treat $ \mathrm{J}_2$ we note that as  $\Psi(T_i) \subset \Omega_i$ by construction it holds 
\[
f_{i,h}|_{ \Psi(T_i) }  = f_i = \calL R_i u =  (\calL \Ext_i R_i u)|_{ \Psi(T_i) }  . 
\]
Hence, we obtain from Lemma~\ref{lem:pullback} (d) 
\begin{align*}
\mathrm{J}_2  &= \sum\limits_{i=1,2} \sum\limits_{ \el \in \mesh^{i} } h^2 \norm{ \calL (\Ext_i R_i u )  -  \calL(  \Ext_i u_i  \circ  \Phi_h ) \circ \Phi_h^{-1}   }^2_{ \Psi_h(T_i) } \lesssim 
	h^{2q} \norm{u}_{H^2(\BrokenOmega)}^2,
\end{align*}
which concludes the argument.
\end{proof}
We also have to estimate the continuous interior penalty term. 
The proof is similar to \cite[Lemma 13]{L_GUFEMA_2017} where a related estimate for a facet-based ghost penalty stabilization, which is a localized form of continuous interior penalty (possibly including higher order jumps), has been established.
To this end, an interpolation operator which maps directly into $ \curvedFes{\Theta}^{\Gamma} $ was constructed.  
Note that our interpolator $\InterpUnfitted$ maps to $ \curvedFes{\Psi}^{\Gamma}$ instead so that we first compose with $\Phi_h$ to obtain $\InterpUnfitted  u \circ  \Phi_h \in  \curvedFes{\Theta}^{\Gamma} $. 
\begin{lemm}[Continuous interior penalty]\label{lem:CIP}
Let $u \in H^{q+1}(\BrokenOmega) \cap W^{3,\infty}(\BrokenOmega)$ solve (\ref{eq:PDE_bulk})-(\ref{eq:normal_jump_IF}). 
Then it holds that
	\[ \StabCIP{ \InterpUnfitted  u \circ  \Phi_h }{ \InterpUnfitted  u \circ  \Phi_h }^{1/2}  \lesssim h^{q} \norm{u}_{q}. \]
\end{lemm}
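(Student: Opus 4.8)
The plan is to reduce the curved continuous interior penalty jump to an interpolation error plus a geometry error, exploiting that the facet jump of the gradient of a globally smooth function vanishes. Write $g_i := \Ext_i R_i u$, which is globally of class $W^{3,\infty}$ and hence $C^1$, so that $\jump{ \nabla g_i }_{\Theta_h(F)} \cdot \normal = 0$ across every facet $\Theta_h(F)$. Setting $w_{h,i} := (\InterpUnfitted u)_i \circ \Phi_h = (\InterpBackgroundMesh \Ext_i R_i u) \circ \Phi_h$, this allows me to replace the raw gradient jump by a difference,
\[
\jump{ \nabla w_{h,i} }_{\Theta_h(F)} \cdot \normal = \jump{ \nabla ( w_{h,i} - g_i ) }_{\Theta_h(F)} \cdot \normal,
\]
and thus express $\StabCIP{\InterpUnfitted u \circ \Phi_h}{\InterpUnfitted u \circ \Phi_h}$ purely in terms of $w_{h,i} - g_i$.

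First I would apply a trace inequality on the curved elements $\Theta_h(T)$ (the analogue of (\ref{eq:trace-inequality-dfm}), valid for general $\phi \in H^1(\Theta_h(T))$) to each component of $\nabla( w_{h,i}-g_i )$, which lies in $H^1(\Theta_h(T))$ since $w_{h,i}, g_i \in H^2(\Theta_h(T))$. As every facet is shared by a bounded number of elements, summation over $\facets^i$ yields
\[
\StabCIP{w_h}{w_h} \lesssim \sum_{i=1,2} \sum_{T \in \mesh^{i}} \left( \norm{ \nabla( w_{h,i}-g_i ) }_{\Theta_h(T)}^2 + h^2 \norm{ w_{h,i}-g_i }_{H^2(\Theta_h(T))}^2 \right),
\]
where the factor $h$ in the penalty cancels the $h^{-1}$ produced by the trace inequality. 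It then remains to bound the two bulk contributions, and for this I would split
\[
w_{h,i} - g_i = ( \InterpBackgroundMesh \Ext_i R_i u - \Ext_i R_i u ) \circ \Phi_h + ( g_i \circ \Phi_h - g_i )
\]
into an interpolation part and a geometry part.

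For the interpolation part I would use the element-wise norm equivalence of Lemma~\ref{lem:norm-equiv}(a) together with the approximation estimate (\ref{eq:InterpBackgroundMesh}) (with $m=1$, $\ell=q+1$) for the gradient term, and Lemma~\ref{lem:approx_unfitted}(e) directly for the scaled $H^2$ term; after invoking continuity of $\Ext_i$ both are of order $h^{q} \norm{u}_{ H^{q+1}(\BrokenOmega) }$. For the geometry part $g_i \circ \Phi_h - g_i$ I would invoke Lemma~\ref{lem:pullback}(b) for the gradient (yielding order $h^q$) and Lemma~\ref{lem:pullback}(c) on $M = \Theta_h(T)$ for the $H^2$-seminorm, noting that $h^2 \cdot (h^{q-1})^2 = h^{2q}$ so that summation over elements gives order $h^{q} \norm{u}_{ W^{3,\infty}(\BrokenOmega) }$. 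Collecting all terms and taking the square root produces the claimed bound $h^{q}\norm{u}_q$.

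The main obstacle is precisely the geometry part: since $\Phi_h$ (equivalently $\Theta_h$) is only piecewise smooth, its derivative jumps across mesh facets, so the composed function $g_i \circ \Phi_h$ does \emph{not} have a continuous gradient and cannot be subtracted in the jump-cancellation step. Subtracting the un-composed $g_i$ is what makes the cancellation legitimate, at the price of the extra term $g_i \circ \Phi_h - g_i$, whose control through Lemma~\ref{lem:pullback}(c) requires the full $W^{3,\infty}$ regularity of the extended solution (hence the $W^{3,\infty}$ summand in $\norm{u}_q$) in order to absorb the second-derivative contributions generated by the trace inequality.
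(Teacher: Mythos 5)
Your proposal is correct and takes essentially the same route as the paper's proof: you cancel the facet jump using the globally smooth extension $\Ext_i R_i u$, reduce to element-wise $H^1$ and $h^2$-scaled $H^2$ norms via a trace inequality on the curved elements, and split $w_{h,i}-g_i$ into an interpolation part (handled by Lemma~\ref{lem:approx_unfitted}) and a geometry part $g_i\circ\Phi_h - g_i$ (handled by Lemma~\ref{lem:pullback}~(b)--(c)), exactly as the paper does. The only cosmetic difference is that for the $H^1$ interpolation term you invoke Lemma~\ref{lem:norm-equiv}~(a) together with the estimate (\ref{eq:InterpBackgroundMesh}) where the paper cites Lemma~\ref{lem:approx_unfitted}~(a)--(b), which amounts to the same bound.
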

\begin{proof}
Since  $\Ext_i R_i u \in H^2(\Omega)$ we have 
\begin{align*}
& \StabCIP{ \InterpUnfitted  u \circ  \Phi_h }{ \InterpUnfitted  u \circ  \Phi_h }  =
\sum\limits_{i=1,2} \sum\limits_{ F \in  \facets^i } h \! \! \! \! \int\limits_{\Theta_h(F) } \! \! \! \! \mu_i \norm{ \jump{ \nabla ( \Ext_i R_i u  -   (\InterpBackgroundMesh \Ext_i R_i u) \circ \Phi_h   )  }_{ \Theta_h(F) }  \cdot \normal }^2 \! \!  \dS
\\ 
& \lesssim \sum\limits_{i=1,2} \sum\limits_{ \el \in \mesh^{i} } h \norm{ \nabla ( \Ext_i R_i u - (\InterpBackgroundMesh \Ext_i R_i u) \circ \Phi_h )  }_{ \partial \Theta_h( \el ) }^2   \\ 
& \lesssim \sum\limits_{i=1,2} \sum\limits_{ \el \in \mesh^{i} }  \norm{  \Ext_i R_i u - (\InterpBackgroundMesh \Ext_i R_i u) \circ \Phi_h  }_{ H^1( \Theta_h( \el) ) }^2 + h^2 \norm{  \Ext_i R_i u - (\InterpBackgroundMesh \Ext_i R_i u) \circ \Phi_h   }_{ H^2( \Theta_h(T) ) }^2 \\ 
& := \mathrm{I}_1 + \mathrm{I}_2.  
\end{align*}
From Lemma~\ref{lem:pullback} (c) and Lemma~\ref{lem:approx_unfitted} (e) we obtain 
\begin{align*}
\mathrm{I}_2 & \lesssim \sum\limits_{i=1,2} \sum\limits_{ \el \in \mesh^{i} } h^2 \norm{ \Ext_i R_i u - (\Ext_i R_i u) \circ \Phi_h }_{ H^2(\Theta_h( \el) ) }^2 + 
h^2 \norm{ (\Ext_i R_i u - \InterpBackgroundMesh \Ext_i R_i u) \circ \Phi_h }_{ H^2(\Theta_h( \el) ) }^2 \\
& \lesssim  h^{2q} \norm{u}_{W^{3,\infty}(\BrokenOmega)}^2 + h^{2q}  \norm{u}_{H^{q+1}(\BrokenOmega)}^2. 
\end{align*} 	
The term $\mathrm{I}_1$ is estimated similarly by appealing to Lemma~\ref{lem:pullback} (b) and Lemma~\ref{lem:approx_unfitted} (a)-(b). 
\end{proof}
Let us define 
\begin{align*}
& S_h( \InterpUnfitted  u \circ  \Phi_h ) :=  \sum\limits_{i=1,2} \sum\limits_{ \el \in \mesh^{i} } h^2 \norm{ f_{i,h} - \calL(  \InterpUnfitted  u \circ  \Phi_h )   }^2_{ \Theta_h(T_i) } 
+  \StabTikh{ \InterpUnfitted  u \circ  \Phi_h  }{\InterpUnfitted  u \circ  \Phi_h  }   \\
&  + \StabCIP{ \InterpUnfitted  u \circ  \Phi_h }{ \InterpUnfitted  u \circ  \Phi_h } 
  +\StabIFall{ \InterpUnfitted u \circ \Phi_h  }{ \InterpUnfitted u \circ \Phi_h } 
\end{align*}
Then by combining the previous results we obtain:
\begin{coro}\label{cor:interp-in-stab-bound}
Let $u \in H^{q+1}(\BrokenOmega) \cap W^{3,\infty}(\BrokenOmega) $ solve (\ref{eq:PDE_bulk})-(\ref{eq:normal_jump_IF}). 
Then
\[
S_h( \InterpUnfitted  u \circ  \Phi_h )  \lesssim h^{2q} \norm{u}_{q}^2. 
\]
\end{coro}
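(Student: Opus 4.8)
The plan is to observe that $S_h(\InterpUnfitted u \circ \Phi_h)$ is, by its very definition, a sum of four groups of terms, each of which has already been bounded individually in the preceding lemmas; the proof is then essentially bookkeeping. The four constituents are: the least-squares residual $\sum_{i}\sum_{\el\in\mesh^i} h^2\norm{f_{i,h}-\calL(\InterpUnfitted u\circ\Phi_h)}^2_{\Theta_h(T_i)}$, the Tikhonov term $\StabTikh{\InterpUnfitted u\circ\Phi_h}{\InterpUnfitted u\circ\Phi_h}$, the continuous interior penalty $\StabCIP{\InterpUnfitted u\circ\Phi_h}{\InterpUnfitted u\circ\Phi_h}$, and the combined interface stabilization $\StabIFall{\InterpUnfitted u\circ\Phi_h}{\InterpUnfitted u\circ\Phi_h}$. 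I would bound each by $h^{2q}\norm{u}_q^2$ and sum.

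Three of the four are covered verbatim. The least-squares residual is precisely the quantity estimated in Lemma~\ref{lem:GLS}, giving the bound $h^{2q}\norm{u}_q^2$. The CIP term is controlled by squaring the estimate of Lemma~\ref{lem:CIP}, yielding $h^{2q}\norm{u}_q^2$. For the interface term, I would unfold the definition \eqref{eq:IF-stab-comb}, $\StabIFall{\cdot}{\cdot}=\gamma_{\mathrm{IF}}(\StabNitsche{\cdot}{\cdot}+\StabNablaNormal{\cdot}{\cdot}+\StabNablaGamma{\cdot}{\cdot})$ with $\gamma_{\mathrm{IF}}=1$, and apply parts (a), (b), (c) of Lemma~\ref{lem:IF-stab-interp}, each of which bounds the corresponding piece (in square-root form) by $h^q\norm{u}_{H^{q+1}(\BrokenOmega)}$, hence the squared sum by $h^{2q}\norm{u}_{H^{q+1}(\BrokenOmega)}^2$.

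The only term not directly furnished by a prior lemma is the Tikhonov term, which equals $h^{2q}$ times a weighted $H^1(\Omega_{i,h}^{+})$-seminorm-type quantity of the interpolant, so I would supply a short stability argument for it. I would split $\InterpUnfitted u\circ\Phi_h = \Ext u\circ\Phi_h - (\Ext u - \InterpUnfitted u)\circ\Phi_h$: the $H^1(\Omega_{i,h}^{+})$-norm of the first summand is controlled by $\norm{\Ext u}_{H^1(\Omega_i)}\lesssim\norm{u}_{H^1(\BrokenOmega)}$, using the norm equivalence of Lemma~\ref{lem:norm-equiv}(a) together with continuity of the Sobolev extension $\Ext_i$, while the second summand is of higher order by Lemma~\ref{lem:approx_unfitted}(a)--(b). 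This gives $\StabTikh{\InterpUnfitted u\circ\Phi_h}{\InterpUnfitted u\circ\Phi_h}\lesssim h^{2q}\norm{u}_{H^1(\BrokenOmega)}^2$.

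Finally, since $\norm{u}_{H^{q+1}(\BrokenOmega)}\le\norm{u}_q$ and $\norm{u}_{H^1(\BrokenOmega)}\le\norm{u}_q$ by the definition \eqref{eq:def_q_norm}, adding the four bounds yields $S_h(\InterpUnfitted u\circ\Phi_h)\lesssim h^{2q}\norm{u}_q^2$, which is the claim. I do not anticipate any genuine obstacle here: the statement is a clean corollary assembling Lemmas~\ref{lem:IF-stab-interp}, \ref{lem:GLS}, and \ref{lem:CIP}, and the one piece of fresh work, the $H^1$-stability of the interpolant needed for the Tikhonov term, is entirely routine.
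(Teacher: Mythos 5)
Your proposal is correct and follows essentially the same route as the paper: the paper's proof is exactly the assembly of Lemma~\ref{lem:IF-stab-interp}, Lemma~\ref{lem:GLS} and Lemma~\ref{lem:CIP}, together with the bound $\StabTikh{\InterpUnfitted u \circ \Phi_h}{\InterpUnfitted u \circ \Phi_h} \lesssim h^{2q}\norm{u}_{H^1(\BrokenOmega)}^2$, which the paper states without detail and for which your splitting into $\Ext u \circ \Phi_h$ plus the interpolation error, handled via Lemma~\ref{lem:norm-equiv}(a), continuity of $\Ext_i$ and Lemma~\ref{lem:approx_unfitted}(a)--(b), is precisely the intended routine justification (noting only that these bounds must be read on the active domains $\Omega_{i,h}^+$, which the elementwise arguments underlying those lemmas deliver).
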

\begin{proof}
This follows from Lemma~\ref{lem:IF-stab-interp}, Lemma~\ref{lem:GLS}, Lemma~\ref{lem:CIP} and 
\begin{align*}
\StabTikh{ \InterpUnfitted  u \circ  \Phi_h  }{\InterpUnfitted  u \circ  \Phi_h  }   
	\lesssim h^{2q} \norm{u}_{H^1(\BrokenOmega)}^2.
\end{align*}
\end{proof}

\section{Error analysis}\label{section:error-analysis}
Thanks to the results established in the previous two sections, the usual error analysis
for methods based on the framework in \cite{B13} can be applied. 
First we establish convergence of the error in the stabilization norm $\tnorm{\cdot}$, see Corollary~\ref{cor:conv-tnorm}.
This result is independent of the stability properties of the continuous problem, i.e.\ it can be established without using equation (\ref{eq:cond_stab}). 
The stabilization norm $\tnorm{\cdot}$ is however too weak to provide a meaningful convergence measure for practical applications. 
To deduce $L^2$-convergence in the target domain we need to utilize the conditional stability estimate (\ref{eq:cond_stab}). 
This allows us to establish Theorem~\ref{thm:L2B-error-estimate}, which is the only result in this article depending on the stability properties of the continuum problem.

We start by analyzing the discretization error.
\begin{prop}\label{prop:conv_discr_error_tnorm}
	Let $u \in H^{q+1}(\BrokenOmega) \cap W^{3,\infty}(\BrokenOmega) $ be the exact solution of (\ref{eq:PDE_bulk})-(\ref{eq:measurements}). Let $(u_h,z_h) \in \curvedFes{\Theta}^{\Gamma} \times \curvedFes{\Theta}^{0}  $ be the solution of (\ref{eq:discr_stab_var_form}). Then 
\[ 
\tnorm{ (u_h - \InterpUnfitted u \circ \Phi_h, z_h) } \lesssim h^q \norm{u}_{q} + \delta.
\]
\end{prop}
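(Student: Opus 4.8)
The plan is to reduce everything to the inf--sup inequality (\ref{eq:inf-sup}) applied to the discrete error pair. Set $\xi_h := u_h - \InterpUnfitted u \circ \Phi_h \in \curvedFes{\Theta}^{\Gamma}$ and recall that, since the continuous dual state vanishes, the pair to be estimated is $(\xi_h, z_h)$. Because (\ref{eq:inf-sup}) is an algebraic property of $B_h$ valid for any element of the product space, it suffices to prove $B_h[(\xi_h,z_h),(v_h,w_h)] \lesssim (h^q\norm{u}_q + \delta)\,\tnorm{(v_h,w_h)}$ for every test pair $(v_h,w_h)\in \curvedFes{\Theta}^{\Gamma}\times\curvedFes{\Theta}^{0}$. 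Using bilinearity in the first slot I would write $(\xi_h,z_h) = (u_h,z_h) - (\InterpUnfitted u \circ \Phi_h, 0)$, substitute the discrete equation (\ref{eq:discr_stab_var_form}) for the $B_h[(u_h,z_h),\cdot]$ contribution, and expand $B_h[(\InterpUnfitted u \circ \Phi_h,0),(v_h,w_h)]$ with (\ref{eq:stab_bfi}), noting that its two terms carrying the vanishing dual argument, $A_h(v_h,0)$ and $s_h^{\ast}(0,w_h)$, are both zero.

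After collecting terms this leaves five contributions: (i) the data-domain term $(u_\omega, v_h)_\omega - (\InterpUnfitted u \circ \Phi_h, v_h)_\omega$, which by (\ref{eq:omega-trafo-id}) equals $(u - \InterpUnfitted u, v_h)_\omega$; (ii) the bulk-and-interface consistency defect $\ell_h(w_h) - A_h(\InterpUnfitted u \circ \Phi_h, w_h)$; (iii) a Galerkin--least-squares residual obtained by pairing the data term $\sum_{i}\sum_{T} h^2 (f_{i,h}, \calL v_{h,i})_{\Theta_h(T_i)}$ against the GLS part of $-s_h(\InterpUnfitted u\circ\Phi_h, v_h)$, which collapses to $\sum_{i}\sum_{T} h^2(f_{i,h} - \calL(\InterpUnfitted u \circ \Phi_h)_i, \calL v_{h,i})_{\Theta_h(T_i)}$; (iv) the remaining stabilization $-\tilde{s}_h(\InterpUnfitted u \circ \Phi_h, v_h)$; and (v) the perturbation $g(v_h,w_h)$. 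The only genuinely non-routine bookkeeping is the recombination in (iii), which converts the raw data term into a consistency residual so that the interpolation estimates become applicable.

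Each piece is then controlled against $\tnorm{(v_h,w_h)}$, using $\norm{v_h}_\omega,\norm{v_h}_{s_h},\norm{w_h}_{s_h^{\ast}}\le \tnorm{(v_h,w_h)}$. For (i), Cauchy--Schwarz and Lemma~\ref{lem:approx_unfitted}(a) give $\lesssim h^{q+1}\norm{u}_q\,\norm{v_h}_\omega$. For (iii) and (iv) I would apply Cauchy--Schwarz in the $s_h$-inner product and then a discrete Cauchy--Schwarz to bundle the two interpolant factors into $S_h(\InterpUnfitted u\circ\Phi_h)^{1/2}$, which Corollary~\ref{cor:interp-in-stab-bound} bounds by $h^q\norm{u}_q$; the companion factor is $\norm{v_h}_{s_h}$. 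For (v) I invoke (\ref{eq:perturbation-bound}) directly to get $\lesssim \delta\,\tnorm{(v_h,w_h)}$.

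The crux is term (ii). Here I would route through the exact geometry: write $A_h(\InterpUnfitted u \circ \Phi_h, w_h) = A(u, w_h\circ\Phi_h^{-1}) - \big[A(u, w_h\circ\Phi_h^{-1}) - A_h(\InterpUnfitted u \circ \Phi_h, w_h)\big]$, use $w_h\circ\Phi_h^{-1}\in\curvedFes{\Phi_h}^{0}$ together with Galerkin consistency (Lemma~\ref{lem:consistency_A}) to replace $A(u, w_h\circ\Phi_h^{-1})$ by $\ell(w_h\circ\Phi_h^{-1})$, and then control $\ell_h(w_h) - \ell(w_h\circ\Phi_h^{-1})$ and the bracketed geometry defect by Lemma~\ref{lem:bil-PDE-geom-error-Christoph}(b) and Lemma~\ref{lem:lem:bil-PDE-geom-error-interp} respectively, both of order $h^q\norm{u}_q\,\norm{w_h}_{s_h^{\ast}}$. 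Summing (i)--(v) and inserting into (\ref{eq:inf-sup}) yields the claim. I expect this geometry-consistency chain to be the main obstacle: one must pass between $\Gamma$ and $\Gamma_h$ consistently and still keep the residual of the weak formulation at order $h^q$ despite the nonconformity $\curvedFes{\Theta}^{\Gamma}\nsubseteq H^1(\Omega)$, which is precisely what the extra interface stabilization in $\tilde{s}_h$ is there to absorb.
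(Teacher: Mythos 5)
Your proposal is correct and follows essentially the same route as the paper's proof: the same inf--sup reduction, the same decomposition of $B_h[(u_h-\InterpUnfitted u\circ\Phi_h,z_h),(v_h,w_h)]$ into the data-domain term, the geometric/consistency defect split via Lemma~\ref{lem:consistency_A}, Lemma~\ref{lem:bil-PDE-geom-error-Christoph} and Lemma~\ref{lem:lem:bil-PDE-geom-error-interp}, the recombined GLS-plus-stabilization residual bounded by Corollary~\ref{cor:interp-in-stab-bound}, and the perturbation handled by (\ref{eq:perturbation-bound}). Your terms (i)--(v) correspond exactly to the paper's $\mathrm{I}_1,\ldots,\mathrm{I}_5$ (with (iii) and (iv) merged into $\mathrm{I}_4$ there), so there is nothing to flag.
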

\begin{proof}
In view of the inf-sup condition (\ref{eq:inf-sup}), it suffices to show that 
\[ B_h[(u_h - \InterpUnfitted u \circ \Phi_h,z_h),(v_h,w_h)] \lesssim 
	\tnorm{ (v_h,w_h) } ( h^q \norm{u}_{q} + \delta) 
	\] 
for all $(v_h,w_h) \in \curvedFes{\Theta}^{\Gamma} \times \curvedFes{\Theta}^{0} $. 
	Using first order optimality (\ref{eq:discr_stab_var_form}) and consistency (Lemma~\ref{lem:consistency_A}) yields 
\begin{align*}
& B_h[(u_h - \InterpUnfitted u \circ \Phi_h,z_h),(v_h,w_h)] =  
	(u- \InterpUnfitted u \circ \Phi_h, v_h)_{\omega}  + \ell_{h}(w_h) + g(v_h,w_h)  \\    
	& + \sum\limits_{i=1,2} \sum\limits_{ \el \in \mesh^{i} } h^2 ( f_{i,h} , \calL v_{h,i} )_{  \Theta_h( \el_i) } - s_h( \InterpUnfitted u \circ \Phi_h, v_h) - A_h( \InterpUnfitted u \circ \Phi_h , w_h ) \\
	& = \underbrace{(u- \InterpUnfitted u \circ \Phi_h, v_h)_{\omega}}_{I_1} + \underbrace{[ \ell_h(w_h) - \ell( w_h \circ \Phi_h^{-1} )  ]}_{I_2} + \underbrace{[A(u, w_h \circ \Phi_h^{-1} ) - A_h( \InterpUnfitted u \circ \Phi_h , w_h )]}_{I_3} \\
	& + \underbrace{ \left[ \sum\limits_{i=1,2} \sum\limits_{ \el \in \mesh^{i} } h^2 ( f_{i,h} - \calL \InterpUnfitted u \circ \Phi_h , \calL v_{h,i} )_{  \Theta_h( \el_i) }  - \tilde{s}_h( \InterpUnfitted u \circ \Phi_h ,v_h)  \right]}_{I_4} + \underbrace{g(v_h,w_h)}_{I_5}.
\end{align*} 
We consider the terms $\mathrm{I}_j$ separately.
\begin{itemize}
\item Recall that we assume $\Phi_{h} |_{\omega} = \ID$ and $\omega \subset \Omega_i$ for exactly one $i \in \{1,2\}$ so that from the interpolation estimates 
in Lemma~\ref{lem:approx_unfitted} we obtain
\[
	\mathrm{I}_1  \lesssim \norm{v_h}_{\omega} \norm{ u -  \InterpUnfitted  u }_{\omega} 
		\lesssim h^{q+1} \norm{v_h}_{\omega} \norm{u}_{H^{q+1}(\BrokenOmega)}. 
\]
\item From Lemma~\ref{lem:bil-PDE-geom-error-Christoph} and Lemma~\ref{lem:lem:bil-PDE-geom-error-interp} we have 
\[ \mathrm{I}_2 + \mathrm{I}_3 \lesssim h^q \norm{u}_{q} \norm{w_h}_{ s_h^{\ast} }. \]
\item For the second to last term we apply Corollary~\ref{cor:interp-in-stab-bound} to obtain
\begin{align*}
\mathrm{I}_4 \lesssim  S_h( \InterpUnfitted  u \circ  \Phi_h )^{1/2} \norm{v_h}_{s_h} 
\lesssim h^q \norm{u}_{q}  \norm{v_h}_{s_h}. 
\end{align*}
\item Now only the perturbation remains which has already been estimated in (\ref{eq:perturbation-bound}).
\end{itemize}
Combining these estimates yields the claim.
\end{proof}
\begin{rema}[Regularity assumptions on $u$]
	Recall that in several of the proofs\footnote{See e.g.\ Lemma~\ref{lem:bil-PDE-geom-error-Christoph} (b) and Lemma~\ref{lem:GLS}. In Lemma~\ref{lem:CIP} instead the requirement $u \in W^{3,\infty}(\BrokenOmega)$ appears to be genuine. } of the previous section the norms $\norm{f}_{ W^{1,\infty}( \BrokenOmega ) }$ have been replaced by the norms $\norm{u}_{ W^{3,\infty}( \BrokenOmega ) }$ using that $u$ solves (\ref{eq:PDE_bulk}). Note that the requirement $f \in W^{1,\infty}( \BrokenOmega )$ is already needed when performing a Strang-type error analysis of the approximation of a smooth boundary by a polygonal domain (i.e. even without the isoparametric mapping) for well-posed problems. Hence, the regularity assumptions employed here seem fairly natural.
\end{rema}
\begin{coro}\label{cor:conv-tnorm}
Let $u \in H^{q+1}(\BrokenOmega) \cap W^{3,\infty}(\BrokenOmega) $ be the exact solution of (\ref{eq:PDE_bulk})-(\ref{eq:measurements}). 
Let $(u_h,z_h) \in \curvedFes{\Theta}^{\Gamma} \times  \curvedFes{\Theta}^{0} $ be the solution of (\ref{eq:discr_stab_var_form}). Then 
\[
\tnorm{ (u_h - \Ext u \circ \Phi_h, z_h) } \lesssim h^q \norm{u}_{q} + \delta.
\]
\end{coro}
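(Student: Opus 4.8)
The plan is to deduce the corollary from Proposition~\ref{prop:conv_discr_error_tnorm} by trading the interpolant $\InterpUnfitted u \circ \Phi_h$ for the smooth extension $\Ext u \circ \Phi_h$, at the price of an interpolation/consistency error of order $h^q$. Since $\tnorm{\cdot}^2$ is a sum of squares of seminorms, it obeys the triangle inequality also for the piecewise-smooth function $\Ext u \circ \Phi_h$ (for which all constituent terms are well defined), and the dual slot $z_h$ is identical on both sides. Writing $\eta_h := (\InterpUnfitted u - \Ext u) \circ \Phi_h$, I would first split
\[
\tnorm{ (u_h - \Ext u \circ \Phi_h, z_h) } \leq \tnorm{ (u_h - \InterpUnfitted u \circ \Phi_h, z_h) } + \tnorm{ (\eta_h, 0) },
\]
bound the first summand by Proposition~\ref{prop:conv_discr_error_tnorm}, and reduce the claim to showing $\norm{\eta_h}_{s_h} + \norm{\eta_h}_{\omega} \lesssim h^q \norm{u}_q$. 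Componentwise, $(\eta_h)_i = (\InterpBackgroundMesh \Ext_i R_i u - \Ext_i R_i u) \circ \Phi_h$ is exactly the pulled-back interpolation error of the globally smooth extension $\Ext_i R_i u = \Ext_i u_i$, which is the quantity already analysed in the consistency lemmas of Section~\ref{ssection:unfitted-interp}.

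For the data-domain term and for most of the stabilization, I would proceed seminorm by seminorm using a further triangle inequality $\norm{\eta_h}_{\bullet} \leq \norm{\InterpUnfitted u \circ \Phi_h}_{\bullet} + \norm{\Ext u \circ \Phi_h}_{\bullet}$ and recycle the existing estimates. Concretely: the $\omega$-term uses $\Phi_h|_{\omega} = \ID$ together with Lemma~\ref{lem:approx_unfitted}(a); the Tikhonov term $\StabTikh{\cdot}{\cdot}$ is of higher order, being $h^{2q}$ times the $H^1$-interpolation error bounded in Lemma~\ref{lem:approx_unfitted}(a)--(b); the interface contributions are controlled by Lemma~\ref{lem:IF-stab-interp} for the $\InterpUnfitted u$-part, while the $\Ext u \circ \Phi_h$-part contributes \emph{nothing} to $\StabNitsche{\cdot}{\cdot}$ (since $\jump{\Ext u}_{\Gamma} = 0$ and $\Phi_h(\Gamma_h) = \Gamma$) and only an $\mathcal{O}(h^q)$ geometric remainder to $\StabNablaNormal{\cdot}{\cdot}$ and $\StabNablaGamma{\cdot}{\cdot}$ via Lemma~\ref{lem:Deriv-IF}, using the homogeneous flux and tangential conditions $\jump{\mu \nabla \Ext u}_{\Gamma} \cdot n_{\Gamma} = 0$ and $\jump{\nabla_{\Gamma} \Ext u}_{\Gamma} = 0$. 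Finally, $\StabCIP{\cdot}{\cdot}$ is bounded through Lemma~\ref{lem:CIP} for the $\InterpUnfitted u$-part, and through the pull-back estimate Lemma~\ref{lem:pullback}(c) for $\StabCIP{\Ext u \circ \Phi_h - \Ext u}{\cdot}$, exploiting that $\jump{\nabla \Ext_i u_i}_F = 0$ because $\Ext_i u_i \in H^2$.

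The single term where this naive splitting fails --- and which I expect to be the main obstacle --- is the Galerkin-least-squares contribution $\StabGLS{\eta_h}{\eta_h} = \sum_{i} \sum_{\el \in \mesh^{i}} h^2 \norm{ \calL (\eta_h)_i }_{ \Theta_h(\el_i) }^2$. Splitting $\eta_h$ into its two constituents is useless here, since $\calL(\InterpUnfitted u \circ \Phi_h)$ and $\calL(\Ext u \circ \Phi_h)$ are each only $\mathcal{O}(1)$ (both approximate $f_{i,h}$), so each piece would contribute $\mathcal{O}(h)$, which is insufficient once $q \geq 2$; the cancellation of the leading term is essential. I would therefore insert the data $f_{i,h}$ and write
\[
\calL (\eta_h)_i = \bigl[ f_{i,h} - \calL((\InterpUnfitted u)_i \circ \Phi_h) \bigr] - \bigl[ f_{i,h} - \calL(\Ext_i u_i \circ \Phi_h) \bigr],
\]
turning both pieces into PDE residuals. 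The residual involving the interpolant is bounded directly by Lemma~\ref{lem:GLS}, while the residual involving the extension is exactly the intermediate quantity $\mathrm{J}_1 + \mathrm{J}_2$ estimated \emph{inside} the proof of Lemma~\ref{lem:GLS} (using $f_{i,h}|_{\Psi(\el_i)} = \calL \Ext_i R_i u$, Lemma~\ref{lem:norm-equiv} and Lemma~\ref{lem:pullback}); both are $\mathcal{O}(h^q \norm{u}_q)$. Collecting the $\mathcal{O}(h^q)$ bounds for all stabilization pieces and for the $\omega$-term yields $\tnorm{(\eta_h, 0)} \lesssim h^q \norm{u}_q$, and combining this with Proposition~\ref{prop:conv_discr_error_tnorm} gives the assertion.
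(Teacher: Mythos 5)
Your proposal is correct and follows essentially the same route as the paper's own (very terse) proof: a triangle inequality in $\tnorm{\cdot}$, Proposition~\ref{prop:conv_discr_error_tnorm} for the discrete part, and the interpolation machinery of Lemma~\ref{lem:approx_unfitted} together with the consistency lemmas of Section~\ref{ssection:unfitted-interp} for the approximation error $\eta_h := (\InterpUnfitted u - \Ext u)\circ\Phi_h$, which the paper dispatches in one line. Your only detour is the Galerkin--least-squares term, where the residual decomposition through $f_{i,h}$ is valid but unnecessary: since $\calL$ is second order with piecewise constant coefficients, one has $\StabGLS{\eta_h}{\eta_h} \lesssim \sum_{i=1,2}\sum_{\el\in\mesh^{i}} h^2 \norm{\eta_{h,i}}^2_{H^2(\Theta_h(\el))} \lesssim h^{2q}\norm{u}^2_{H^{q+1}(\BrokenOmega)}$ directly from Lemma~\ref{lem:approx_unfitted}~(e) --- this is precisely the first estimate inside the paper's proof of Lemma~\ref{lem:GLS} --- so the needed cancellation is already encoded in $\eta_h$ being an interpolation error, without inserting the data.
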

\begin{proof}
By the triangle inequality 
\[
\tnorm{ (u_h - \Ext u \circ \Phi_h, z_h) } 
\lesssim 
\tnorm{ (u_h - \InterpUnfitted u \circ \Phi_h, z_h) } 
+ 
\tnorm{ ( (\InterpUnfitted u  -  \Ext u) \circ \Phi_h, 0) }. 
\]
The first term has been estimated in Proposition~\ref{prop:conv_discr_error_tnorm} while
the approximation error is easily treated by appealing to the interpolation results given in Lemma~\ref{lem:approx_unfitted}.
\end{proof}
Corollary~\ref{cor:conv-tnorm} has to be combined with the conditional stability 
estimate from Corollary~\ref{cor:conditional_stabiliy_estimate} to deduce convergence rates in the target domain. 
To this end, we would like to apply the latter to $u - u_h \circ \Phi_h^{-1}$.
Unfortunately, this is not immediately possible as $u_h \circ \Phi_h^{-1} \notin H^1(\Omega)$ due to the use of an unfitted discretization.
This can be fixed by adding a corrector function which removes possible jumps across the interface.
The next lemma shows that the norm of this correction can be controlled by the stabilization.
\begin{lemm}\label{lem:corr_IF_jumps}
Let $u \in H^{q+1}(\BrokenOmega) \cap W^{3,\infty}(\BrokenOmega) $ be the exact solution of (\ref{eq:PDE_bulk})-(\ref{eq:measurements}). 
Let $(u_h,z_h) \in \curvedFes{\Theta}^{\Gamma} \times \curvedFes{\Theta}^{0}$ be the solution of (\ref{eq:discr_stab_var_form}). 
Let $\varphi \in H^1(\Omega_1)$ be the solution of 
\begin{equation}\label{eq:PDE_jump_correction}
	\left\{ \begin{array}{rcll} & -\Delta \varphi = 0, \quad  &\text{ in } \Omega_1, \\
	& \varphi = - \jump{u_h \circ \Phi_h^{-1} } \quad  & \text{ on } \Gamma.   \end{array}\right.
\end{equation}
Then 
\[
\norm{\varphi}_{ H^1(\Omega_1) }  \lesssim h^q \norm{u}_{q} + \delta.
\]
\end{lemm}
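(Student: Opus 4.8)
The plan is to reduce the claim to a trace estimate on the interface and then convert that estimate into the interface and Tikhonov stabilization terms, which are already controlled by the earlier results. Since $\Gamma = \partial\Omega_1$, the function $\varphi$ is the harmonic (Dirichlet) lifting of the datum $-\jump{u_h \circ \Phi_h^{-1}}$, so by standard elliptic stability of the Dirichlet problem together with the trace theorem on the fixed Lipschitz domain $\Omega_1$ one has
\[
\norm{\varphi}_{ H^1(\Omega_1) } \lesssim \norm{ \jump{ u_h \circ \Phi_h^{-1} } }_{ H^{1/2}(\Gamma) }.
\]
The one-sided restrictions $u_{h,i}\circ\Phi_h^{-1}$ belong to $H^1(\Omega_i)$ and are continuous along $\Gamma$ (as $u_{h,i}$ is $H^1$-conforming and $\Phi_h$ is globally continuous), so $w := \jump{ u_h \circ \Phi_h^{-1} }$ is a well-defined element of $H^1(\Gamma)$ and it remains to bound $\norm{w}_{H^{1/2}(\Gamma)}$ by $h^q\norm{u}_q + \delta$.

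For the trace estimate I would use the interpolation inequality $\norm{w}_{H^{1/2}(\Gamma)}^2 \lesssim \norm{w}_{\Gamma}\,\norm{w}_{H^1(\Gamma)}$ followed by a Young inequality weighted by $h$, which gives the scaled bound
\[
\norm{w}_{H^{1/2}(\Gamma)}^2 \lesssim h^{-1}\norm{w}_{\Gamma}^2 + h\,\norm{ \nabla_{\Gamma} w }_{\Gamma}^2.
\]
Both terms are transferred to the discrete interface via $\tilde w := w\circ\Phi_h = \jump{u_h}_{\Gamma_h}$. By Lemma~\ref{lem:Deriv-IF}(a) the $L^2$ part satisfies $h^{-1}\norm{w}_{\Gamma}^2 \sim h^{-1}\norm{\tilde w}_{\Gamma_h}^2 \sim \StabNitsche{u_h}{u_h}$, while Lemma~\ref{lem:Deriv-IF}(c) bounds the tangential part by $h\,\norm{\nabla_\Gamma w}_\Gamma^2 \lesssim h^{1+2q}\norm{\nabla\tilde w}_{\Gamma_h}^2 + h\,\norm{\nabla_{\Gamma_h}\tilde w}_{\Gamma_h}^2$. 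Since the tangential gradient commutes with the jump, $\nabla_{\Gamma_h}\tilde w = \jump{\nabla_{\Gamma_h} u_h}_{\Gamma_h}$, the last summand equals (up to constants) $\StabNablaGamma{u_h}{u_h}$.

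The crucial remaining contribution is the geometric cross-term $h^{1+2q}\norm{\nabla\tilde w}_{\Gamma_h}^2$, involving the \emph{full} gradient of the jump on $\Gamma_h$; I expect this to be the main obstacle, as it is not controlled by any interface penalty, and it is precisely here that the gradient part of the Tikhonov term $\StabTikh{\cdot}{\cdot}$ becomes indispensable. Applying the trace inequality (\ref{eq:trace-inequality-dfm}) elementwise and then the inverse inequality (\ref{eq:inverse-ieq-gradient}) to the second-order part gives $\norm{\nabla u_{h,i}}_{\Gamma_h}^2 \lesssim h^{-1}\norm{\nabla u_{h,i}}_{\Omega_{i,h}^{+}}^2$, so that
\[
h^{1+2q}\norm{\nabla\tilde w}_{\Gamma_h}^2 \lesssim h^{2q}\sum_{i=1,2}\norm{\nabla u_{h,i}}_{\Omega_{i,h}^{+}}^2 \lesssim \StabTikh{u_h}{u_h}
\]
by the definition (\ref{eq:def-Tikh}). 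Collecting the three contributions yields $\norm{w}_{H^{1/2}(\Gamma)}^2 \lesssim \StabNitsche{u_h}{u_h} + \StabNablaGamma{u_h}{u_h} + \StabTikh{u_h}{u_h}$.

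Finally I would bound each of these three quantities, each of which is dominated by $\norm{\cdot}_{s_h}^2$. Splitting $u_h = (u_h - \InterpUnfitted u\circ\Phi_h) + \InterpUnfitted u\circ\Phi_h$ and using the triangle inequality, the difference part is controlled by $\norm{u_h - \InterpUnfitted u\circ\Phi_h}_{s_h} \lesssim h^q\norm{u}_q + \delta$ from Proposition~\ref{prop:conv_discr_error_tnorm}, whereas the interpolant part is handled by the consistency estimates of Lemma~\ref{lem:IF-stab-interp}(b),(c) for the Nitsche and tangential terms and by the Tikhonov bound used in the proof of Corollary~\ref{cor:interp-in-stab-bound}, all of order $h^q\norm{u}_{H^{q+1}(\BrokenOmega)}$. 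Combining these with the harmonic lifting estimate gives $\norm{\varphi}_{H^1(\Omega_1)} \lesssim h^q\norm{u}_q + \delta$, as asserted.
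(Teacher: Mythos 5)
Your proposal is correct and follows essentially the same path as the paper's proof: harmonic lifting plus trace, the Gagliardo--Nirenberg/interpolation inequality with an $h$-weighted Young inequality, transfer to $\Gamma_h$ via Lemma~\ref{lem:Deriv-IF}, and --- crucially, as you identify --- absorption of the full-gradient geometric cross-term $h^{1+2q}\norm{\nabla \tilde w}_{\Gamma_h}^2$ into the gradient Tikhonov term on the active mesh via the trace and inverse inequalities. The only organizational difference is that you bound the stabilization functionals at $u_h$ itself and split through the interpolant at the very end (Proposition~\ref{prop:conv_discr_error_tnorm} together with Lemma~\ref{lem:IF-stab-interp} and Corollary~\ref{cor:interp-in-stab-bound}), whereas the paper subtracts $\Ext u \circ \Phi_h$ immediately after the Gagliardo--Nirenberg step, using the exact jump conditions, and then invokes Corollary~\ref{cor:conv-tnorm}; both variants yield the identical bound.
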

\begin{proof}
It follows from the Lax-Milgram lemma that 
\[
\norm{\varphi}_{ H^1(\Omega_1) }  \lesssim \norm{  \jump{u_h \circ \Phi_h^{-1} } }_{ H^{1/2}(\Gamma) }.
\]
We have to estimate the jump term on the right hand side using convergence of $u_h -\Ext u \circ \Phi_h$ in the $\tnorm{\cdot}$-norm, see Corollary~\ref{cor:conv-tnorm}. 
To achieve this, there are two main obstacles to overcome. 
Firstly, we have to eliminate the $H^{1/2}$-norm. 
To this end, we apply a Gagliardo-Nirenberg inequality on $\Gamma$ 
and use $ \jump{u}_{\Gamma} = \jump{ \Ext u}_{\Gamma} = 0$ and $ \jump{ \nabla_{\Gamma} u}_{\Gamma} = \jump{ \nabla_{\Gamma} \Ext u}_{\Gamma} = 0$ to obtain
\begin{align*}
\norm{   \jump{u_h \circ \Phi_h^{-1} }  }_{ H^{1/2}(\Gamma) }^2 & \lesssim \norm{ \jump{u_h \circ \Phi_h^{-1} } }_{L^2(\Gamma)}  \norm{ \jump{u_h \circ \Phi_h^{-1} } }_{H^1(\Gamma)}  \\
& \lesssim h^{-1} \norm{ \jump{u_h \circ \Phi_h^{-1} - \Ext u } }_{ \Gamma }^2 + h \norm{ \jump{ \nabla_{\Gamma} ( u_h \circ \Phi_h^{-1} - \Ext u) } }_{ \Gamma }^2.  
\end{align*}
Secondly, we have to transform from the exact interface to its approximation $\Gamma_h$ on 
which the discretization is defined. 
According to Lemma~\ref{lem:Deriv-IF} (c), this will lead to an additional  geometrical error on 
the discrete interfaces involving the full gradient, that is 
\begin{align*}
\norm{   \jump{u_h \circ \Phi_h^{-1} }  }_{ H^{1/2}(\Gamma) }^2 & \lesssim  
h^{-1} \norm{ \jump{u_h - \Ext u \circ \Phi_h } }_{ \Gamma_h }^2 + h \norm{ \jump{ \nabla_{\Gamma_h} ( u_h  - \Ext u \circ \Phi_h ) } }_{ \Gamma_h }^2 \\ 
	& +  h^{2(q+\frac{1}{2})} \sum\limits_{i=1,2} \norm{ \nabla ( u_{h,i} - \Ext_i u_i \circ \Phi_h  ) }_{\Gamma_h}^2 \\
	& \lesssim \tnorm{ (u_h - \Ext u \circ \Phi_h, z_h) }^2  +  h^{2(q+\frac{1}{2})} \sum\limits_{i=1,2} ( \norm{ \nabla  u_{h,i}  }_{\Gamma_h}^2 + \norm{ \nabla \Ext_i u_i  }_{\Gamma}^2 )  \\
& \lesssim (h^q \norm{u}_{q} + \delta)^2 + h^{2(q+\frac{1}{2})} \sum\limits_{i=1,2} \norm{ \nabla  u_{h,i}  }_{\Gamma_h}^2. 
\end{align*}
Here we used that the jumps on the interface are controlled by the stabilization and Corollary~\ref{cor:conv-tnorm}.
We will bound the remaining term in terms of the weak $H^1$-penalty using the trace\footnote{Here it is essential that $\StabTikh{\cdot}{\cdot}$ is defined on the active mesh $ \Omega_{i,h}^+ $ and not merely on $\Omega_{i,h}$.} (\ref{eq:trace-inequality-dfm}) and inverse inequalitites (\ref{eq:inverse-ieq-gradient}): 
\begin{align*}
	h \norm{ \nabla u_{h,i}}_{ \Gamma_h}^2 & \lesssim \sum\limits_{\el \in \Cutel} \norm{u_{h,i} }_{ H^1(\Theta_h(\el)) }^2 + h^2 \norm{u_{h,i}}_{ H^2(\Theta_h(\el)) }^2 
	\lesssim \sum\limits_{\el \in \Cutel} \norm{u_{h,i} }_{ H^1(\Theta_h(\el)) }^2. 
\end{align*}	
Hence, 
\begin{align*}
	& h^{2(q+\frac{1}{2})} \sum\limits_{i=1,2} \norm{ \nabla  u_{h,i}  }_{\Gamma_h}^2 \lesssim  \StabTikh{ u_h }{ u_h } \\
& \lesssim \StabTikh{ u_h - \Ext u \circ \Phi_h  }{ u_h - \Ext u \circ \Phi_h }
	+  h^{2q} \sum\limits_{i=1,2} \norm{  \Ext_i u_i \circ \Phi_h }_{H^1(\Omega)}^2  \lesssim (h^q \norm{u}_{q} + \delta)^2
\end{align*}
by Corollary~\ref{cor:conv-tnorm}. 
\end{proof}
To benefit from the application of the conditional stability estimate, we have to control 
the terms on the right hand side of (\ref{eq:cond_stab}). 
In particular, we have to control the residual in the $H^{-1}$-norm, which is ensured by the following lemma. 
\begin{lemm}\label{lem:weak-conv}
Let $u \in H^{q+1}(\BrokenOmega) \cap W^{3,\infty}(\BrokenOmega) $ be the exact solution of (\ref{eq:PDE_bulk})-(\ref{eq:measurements}). 
Let $(u_h,z_h) \in \curvedFes{\Theta}^{\Gamma} \times \curvedFes{\Theta}^{0} $ be the solution of (\ref{eq:discr_stab_var_form}). Then for any $w \in H^1_0(\Omega)$ it holds that  
\[
a(u-u_h \circ \Phi_h^{-1}, w) \lesssim (h^q \norm{u}_{q} + \delta) \norm{w}_{H^1(\Omega)}.
\]
\end{lemm}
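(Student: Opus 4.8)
The plan is to combine the variational consistency of $a(\cdot,\cdot)$ for the exact solution with a Galerkin-type splitting against an interpolated test function, reducing everything to residual quantities already controlled in the stabilization norm by Corollary~\ref{cor:conv-tnorm}. Write $\tilde u_h := u_h \circ \Phi_h^{-1}$ and $e := u - \tilde u_h$. Since $u$ solves (\ref{eq:PDE_bulk})--(\ref{eq:normal_jump_IF}) and $w \in H^1_0(\Omega)$ is continuous, elementwise integration by parts together with the flux condition (\ref{eq:normal_jump_IF}) (which kills the interface contribution) yields $a(u,w) = (f,w)_{\BrokenOmega}$, the continuous analogue of Lemma~\ref{lem:consistency_A}. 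I then pick a stable quasi-interpolant $w_h \in \curvedFes{\Theta}^{0}$ of $w \circ \Phi_h$, set $\tilde w_h := w_h \circ \Phi_h^{-1} \in \curvedFes{\Phi}^{0} \subset H^1_0(\Omega)$, and split
\[
a(e,w) = a(e, w - \tilde w_h) + a(e, \tilde w_h) =: \mathrm{T}_A + \mathrm{T}_B.
\]

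For $\mathrm{T}_A$ I integrate by parts elementwise on each $\Omega_i$. Because $\calL u = f$ and the exact normal flux is continuous, the contributions are a bulk residual $\sum_{i,T}(f - \calL \tilde u_{h,i}, w - \tilde w_{h,i})_{T\cap\Omega_i}$, facet jumps $\sum_F(\mu_i \jump{\nabla \tilde u_{h,i}}\cdot\normal, w - \tilde w_{h,i})_F$, and an interface term $\int_\Gamma \jump{\mu \nabla \tilde u_h}\cdot\normal_\Gamma\,(w - \tilde w_h)\,\dS$ (the jump $\jump{w - \tilde w_h}$ vanishes since $w - \tilde w_h \in H^1(\Omega)$). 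Using the interpolation bounds $h^{-1}\norm{w - \tilde w_h}_{T} + h^{-1/2}\norm{w - \tilde w_h}_{F} + \norm{\nabla(w-\tilde w_h)}_{T} \lesssim \norm{w}_{H^1(\Omega)}$ and Cauchy--Schwarz, these three contributions are bounded respectively by $\StabGLS{\tilde u_h}{\tilde u_h}^{1/2}$, $\StabCIP{\tilde u_h}{\tilde u_h}^{1/2}$ and $\StabNablaNormal{\tilde u_h}{\tilde u_h}^{1/2}$, after transporting each residual from the exact to the discrete geometry via Lemma~\ref{lem:norm-equiv}, Lemma~\ref{lem:Deriv-IF}(b) and Lemma~\ref{lem:pullback}(d) (these are precisely the terms labelled $\mathrm{I}_1,\mathrm{I}_2,\mathrm{I}_3$). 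Splitting $\tilde u_h = (u_h - \Ext u \circ \Phi_h) + \Ext u \circ \Phi_h$, the first summand is $\lesssim h^q\norm{u}_{q}+\delta$ by Corollary~\ref{cor:conv-tnorm} and the second by the consistency bounds of Lemma~\ref{lem:GLS}, Lemma~\ref{lem:CIP}, Lemma~\ref{lem:IF-stab-interp}, so that $\mathrm{T}_A \lesssim (h^q\norm{u}_{q}+\delta)\norm{w}_{H^1(\Omega)}$.

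For $\mathrm{T}_B$ I again use consistency: $N^{c}(u,\tilde w_h)=0$ (as $\jump{u}=0$), hence $a(u,\tilde w_h)=\ell(\tilde w_h)$ by Lemma~\ref{lem:consistency_A}, giving $\mathrm{T}_B = \ell(\tilde w_h) - a(\tilde u_h,\tilde w_h)$. Applying Lemma~\ref{lem:bil-PDE-geom-error-Christoph}(a) with the roles of the first argument played by $\tilde u_h$ (so its pullback is $u_h$) yields $a(\tilde u_h,\tilde w_h) = a_h(u_h,w_h) + R$ with $\abs{R}\lesssim h^q\norm{u_h}_{H^1(\BrokenOmegah)}\norm{w_h}_{s_h^{\ast}}$, while testing (\ref{eq:discr_stab_var_form})--(\ref{eq:stab_bfi}) with $(0,w_h)$ gives the discrete equation $a_h(u_h,w_h) = \tilde\ell_h(w_h) + s_h^{\ast}(z_h,w_h) - N_h^{c}(u_h,w_h)$. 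Collecting terms,
\[
\mathrm{T}_B = [\ell(\tilde w_h) - \ell_h(w_h)] - (\delta f_h, w_h)_{\BrokenOmegah} - s_h^{\ast}(z_h,w_h) + N_h^{c}(u_h,w_h) - R.
\]
Here the first bracket is $\lesssim h^q\norm{u}_{q}\norm{w}_{H^1(\Omega)}$ by Lemma~\ref{lem:bil-PDE-geom-error-Christoph}(b); the data term is $\lesssim\delta\norm{w}_{H^1(\Omega)}$; $s_h^{\ast}(z_h,w_h)\le\norm{z_h}_{s_h^{\ast}}\norm{w_h}_{s_h^{\ast}}\lesssim(h^q\norm{u}_{q}+\delta)\norm{w}_{H^1(\Omega)}$ by Corollary~\ref{cor:conv-tnorm}; and $N_h^{c}(u_h,w_h)\lesssim\StabNitsche{u_h}{u_h}^{1/2}\norm{w_h}_{s_h^{\ast}}\lesssim(h^q\norm{u}_{q}+\delta)\norm{w}_{H^1(\Omega)}$ (term $\mathrm{I}_6$), using that $\jump{\Ext u\circ\Phi_h}_{\Gamma_h}=0$ so the jump of $u_h$ equals that of $u_h-\Ext u\circ\Phi_h$. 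Throughout I use $\norm{w_h}_{s_h^{\ast}}\lesssim\norm{w}_{H^1(\Omega)}$ (interpolant stability and (\ref{eq:dual_stab})) and $\norm{u_h}_{H^1(\BrokenOmegah)}\lesssim\norm{u}_{q}+h^{-q}\delta$ (from the $H^1$-Tikhonov part of $\tnorm{\cdot}$ in Corollary~\ref{cor:conv-tnorm}), so that the prefactor $h^q$ absorbs $h^{-q}\delta$ into $\delta$; this also bounds $R$.

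The main obstacle is the bookkeeping of the geometric error: every residual produced by the elementwise integration by parts lives on the exact geometry ($\Omega_i$, $\Gamma$, the facets $\Psi(F)$), whereas the stabilization terms $\StabGLS{\cdot}{\cdot}$, $\StabCIP{\cdot}{\cdot}$, $\StabNitsche{\cdot}{\cdot}$ and $\StabNablaNormal{\cdot}{\cdot}$ that must control them are defined on the mapped geometry ($\Theta_h(T)$, $\Gamma_h$). Transporting these quantities while retaining the correct $\mathcal{O}(h^q)$ order—in particular the normal-derivative mismatch of Lemma~\ref{lem:Deriv-IF}(b) and the second-derivative pullback of Lemma~\ref{lem:pullback}(d)—is the delicate part, and it is exactly this mechanism that necessitates the interface and $H^1$-Tikhonov regularization. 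A secondary point is that the interface contribution of $\mathrm{T}_A$ must be matched to the normal-flux penalty $\StabNablaNormal{\cdot}{\cdot}$ rather than to $\StabNablaGamma{\cdot}{\cdot}$, the latter entering instead in the corrector estimate of Lemma~\ref{lem:corr_IF_jumps}.
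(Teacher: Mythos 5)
Your argument is correct in substance and assembles exactly the ingredients of the paper's proof---consistency (Lemma~\ref{lem:consistency_A}), the discrete equation tested with $(0,w_h)$, the geometric-error bounds of Lemma~\ref{lem:bil-PDE-geom-error-Christoph}, and Corollary~\ref{cor:conv-tnorm} to absorb every residual into the stabilization---but you order the two main operations differently, and this is a genuine structural difference. The paper first transports the whole problem to the discrete geometry, proving
\[
a(u-u_h \circ \Phi_h^{-1}, w) \lesssim a_h(u\circ\Phi_h - u_h, w\circ\Phi_h) + (h^q \norm{u}_{q} + \delta) \norm{w}_{H^1(\Omega)}
\]
(your $\mathrm{T}_B$-type manipulations, including the Tikhonov-controlled bracket $a_h(u_h,w_h)-a(u_h\circ\Phi_h^{-1},\tilde w_h)$), and only \emph{then} integrates by parts elementwise on the mapped mesh, so that the residuals $\mathrm{I}_1,\mathrm{I}_2,\mathrm{I}_6$ appear natively on $\Theta_h(\el_i)$, $\Theta_h(F)$ and $\Gamma_h$ and pair directly with $\StabGLS{\cdot}{\cdot}$, $\StabCIP{\cdot}{\cdot}$ and $\StabNablaNormal{\cdot}{\cdot}$. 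You instead integrate by parts on the exact geometry and transport each residual afterwards. That route works, but it requires transport steps the cited lemmas do not literally cover: Lemma~\ref{lem:Deriv-IF} is stated only for the pair $\Gamma$/$\Gamma_h$, so your facet-jump term on $\Psi(F)$ needs a facet analogue (provable by the same computations based on (\ref{eq:normal-trafo-Phi}) and (\ref{eq:Gamma-measure-trafo-Phi}), together with the observation that continuity of $u_h$ across $\Theta_h(F)$ makes the tangential part of the gradient jump vanish); likewise the bulk residual $f-\calL(u_h\circ\Phi_h^{-1})$ needs Lemma~\ref{lem:pullback}(d) combined with inverse inequalities on $u_h$. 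Each such transport costs $\mathcal{O}(h^q)$ times full-gradient terms which---as you correctly identify---must be absorbed by the $H^1$-Tikhonov penalty, precisely the $h^{q+\frac{1}{2}}\norm{\nabla u_{h,i}}_{\Gamma_h}$ mechanism of Lemma~\ref{lem:corr_IF_jumps}. The paper's ordering buys fewer transports (a single application of Lemma~\ref{lem:bil-PDE-geom-error-Christoph}(a) does most of the work); yours keeps the PDE structure on the exact geometry longer, which is arguably more transparent but technically heavier to make rigorous.

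Two small repairs. First, in $\mathrm{T}_A$ you bound the stabilization seminorms of the summand $\Ext u\circ\Phi_h$ by Lemmas~\ref{lem:GLS}, \ref{lem:CIP} and \ref{lem:IF-stab-interp}, but those are stated for the \emph{interpolant} $\InterpUnfitted u\circ\Phi_h$, not for $\Ext u\circ\Phi_h$; either add and subtract the interpolant and invoke Lemma~\ref{lem:approx_unfitted}, or---simpler---note that Corollary~\ref{cor:conv-tnorm} already bounds the stabilization seminorms of $u_h-\Ext u\circ\Phi_h$ directly, and that the residuals produced by your integration by parts are already differences with $u$ (since $\calL u=f$, the facet jumps of $\nabla u$ vanish, and (\ref{eq:normal_jump_IF}) holds), so the extra splitting is redundant. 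Second, a labelling slip: your interface residual corresponds to the paper's $\mathrm{I}_6$, not $\mathrm{I}_3$; in the paper $\mathrm{I}_3$ denotes $N^{c}_h(u_h,\InterpUnfittedZ w \circ \Phi_h)$, i.e.\ the term you generate in $\mathrm{T}_B$ and correctly estimate via the mechanism of Lemma~\ref{lem:Ah-continuity} and the fact that $\jump{\Ext u\circ\Phi_h}_{\Gamma_h}=0$.
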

\begin{proof}
	Let us define the shorthand $w_h :=  w \circ \Phi_h, \tilde{w}_h := w_h \circ \Phi_h^{-1} $  and $\tilde{u} = u \circ \Phi_h$.
Clearly, we have to deduce the claim eventually from Corollary~\ref{cor:conv-tnorm} for which 
we have to transform to the approximate geometry. Thus, the first step in this proof is to show that: 
\begin{equation}\label{eq:from-a-to-ah}
a(u-u_h \circ \Phi_h^{-1}, w) \lesssim a_h(\tilde{u} - u_h, w_h) + (h^q \norm{u}_{q} + \delta) \norm{w}_{H^1(\Omega)}  
\end{equation}
holds true. 
To this end, note that $w =\tilde{w}_h$, so we have 
\[
a(u-u_h \circ \Phi_h^{-1}, w)  =  [a(u, \tilde{w}_h ) - a_h(\tilde{u}, w_h)] + [ a_h(u_h,w_h) - a(u_h \circ \Phi_h^{-1}, \tilde{w}_h ) ] + a_h(\tilde{u} - u_h, w_h ). 
\]
	To estimate the first bracket we simply appeal to Lemma~\ref{lem:bil-PDE-geom-error-Christoph} (a). 
To control the second bracket, the term $\StabTikh{\cdot}{\cdot}$ is essential. By definition (\ref{eq:def-Tikh}) of this term, Lemma~\ref{lem:bil-PDE-geom-error-Christoph} (a) and Proposition~\ref{prop:conv_discr_error_tnorm} we deduce 
\begin{align*}
& \abs{ a_h(u_h,w_h) - a(u_h \circ \Phi_h^{-1}, \tilde{w}_h ) }  \lesssim
	h^q  \norm{w}_{H^1(\Omega)} \sum\limits_{i=1,2} \norm{ u_{h,i} }_{ H^1( \Omega_{i,h})  }   \\
	& \lesssim h^q  \norm{w}_{H^1(\Omega)} \left( h^{-q} \StabTikh{ u_h - \InterpUnfitted u \circ \Phi_h }{ u_h - \InterpUnfitted u \circ \Phi_h }^{1/2}  + \norm{\InterpUnfitted u \circ \Phi_h}_{ H^1(\BrokenOmegah) }  \right) \\
	& \lesssim \norm{w}_{H^1(\Omega)} ( h^q  \norm{u}_{q} + \delta),
\end{align*}
which shows (\ref{eq:from-a-to-ah}). 
The next step is to bound $a_h(\tilde{u} - u_h, w_h )$ by terms which can be controlled by the stabilization. We claim that 
\begin{equation}\label{eq:bound-ah-stab}
a_h(\tilde{u} - u_h, w_h ) \lesssim h^q \norm{w}_{H^1(\Omega)} \norm{u}_{q} + \sum\limits_{j=1}^{6}\mathrm{I}_j, 
\end{equation}
where 
\[
\mathrm{I}_1 := \sum\limits_{i=1,2} \sum\limits_{ \el \in \mesh^{i} }  ( \calL ( \Ext_i R_i u \circ \Phi_h  - u_{h,i}) ,  w \circ \Phi_h - ( \InterpBackgroundMeshZ w ) \circ \Phi_h )_{  \Theta_h( \el_i) }
\]
\[
	\mathrm{I}_2 :=  \sum\limits_{i=1,2} \sum\limits_{ F\in  \facets^i } h \int\limits_{\Theta_h(F \cap \el_i)  } \!\!\!\!\! \mu_i \jump{ \nabla ( \Ext_i R_i u \circ \Phi_h  - u_{h,i})  }_{ \Theta_h(F) } \cdot \normal \;  (w \circ \Phi_h - ( \InterpBackgroundMeshZ w ) \circ \Phi_h ) \; \dS_{ \Theta_h(F) }
\]
\[
\mathrm{I}_3 := N^{c}_h(u_h,\InterpUnfittedZ w \circ \Phi_h ), \quad
\mathrm{I}_4 := - s_h^{\ast}(z_h, \InterpUnfittedZ w \circ \Phi_h  ), \quad 
\mathrm{I}_5 := - ( \delta f_h , \InterpUnfittedZ w \circ \Phi_h )_{\BrokenOmegah} 
\]
\[
\mathrm{I}_6 := \left( \jump{ \mu \nabla( \Ext u \circ \Phi_h - u_h  ) } \cdot \normal_{\Gamma_h}, w \circ \Phi_h  -  \InterpUnfittedZ w \circ \Phi_h   \right)_{ \Gamma_h  }. \;
 \]
To establish (\ref{eq:bound-ah-stab}), we first use consistency 
\[
\ell( \InterpUnfittedZ w ) = a(u ,\InterpUnfittedZ w),
\]
which follows from Lemma~\ref{lem:consistency_A}. Hence, 
\begin{align}
a_h(\tilde{u} - u_h, w_h ) &= a_h(\tilde{u} - u_h, w \circ \Phi_h - \InterpUnfittedZ w \circ \Phi_h ) - a_h(u_h ,  \InterpUnfittedZ w \circ \Phi_h ) \nonumber  \\ 
	& + [  a_h( \tilde{u},  \InterpUnfittedZ w \circ \Phi_h ) -  a(u, \InterpUnfittedZ w) ] + \ell( \InterpUnfittedZ w )  \label{eq:weak-conv_aux}. 
\end{align}
By appealing to Lemma~\ref{lem:bil-PDE-geom-error-Christoph} (a)  we obtain 
\begin{align*}
\abs{ a_h( \tilde{u},  \InterpUnfittedZ w \circ \Phi_h ) -  a(u, \InterpUnfittedZ w) } &= \abs{ a(u, (\InterpUnfittedZ w \circ \Phi_h ) \circ \Phi_h^{-1} ) - a_h(\tilde{u}, \InterpUnfittedZ w \circ \Phi_h) } \\
& \lesssim h^q \norm{u}_{H^1(\BrokenOmega)} \norm{w}_{H^1(\Omega)}. 
\end{align*}
	Next we add the first order optimality condition 
\begin{align*}
& A_h(u_h, \InterpUnfittedZ w \circ \Phi_h) + \mathrm{I}_4  - \ell_h( \InterpUnfittedZ w \circ \Phi_h ) + \mathrm{I}_5  = 0, 
\end{align*}
which follows from (\ref{eq:discr_stab_var_form}), to the right hand side of (\ref{eq:weak-conv_aux}).
Using that 
\[
\abs{ \ell( \InterpUnfittedZ w ) - \ell_h( \InterpUnfittedZ w \circ \Phi_h ) } 
\lesssim h^q \norm{u}_{q} \norm{   \InterpUnfittedZ w \circ \Phi_h  }_{\BrokenOmegah } \lesssim h^q \norm{u}_{q} \norm{w}_{H^1(\Omega)},
\]
as ensured by Lemma~\ref{lem:bil-PDE-geom-error-Christoph} (b) and 
\[
- a_h(u_h ,  \InterpUnfittedZ w \circ \Phi_h ) + A_h(u_h, \InterpUnfittedZ w \circ \Phi_h) 
= \mathrm{I}_3 
\]
yields 
 \begin{align}
a_h(\tilde{u} - u_h, w_h )  \lesssim h^q \norm{u}_{q} \norm{w}_{H^1(\Omega)} + a_h(\tilde{u} - u_h, w \circ \Phi_h - \InterpUnfittedZ w \circ \Phi_h )  + \mathrm{I}_3  + \mathrm{I}_4 + \mathrm{I}_5. \label{eq:weak-conv_aux2}  
 \end{align}
 We recognize that the claimed inequality (\ref{eq:bound-ah-stab}) slowly begins to appear.
 The final step is an element-wise integration by parts of $a_h(\tilde{u} - u_h, w \circ \Phi_h - \InterpUnfittedZ w \circ \Phi_h ) $, which will 
 in particular lead to a boundary term on $\partial \Omega_{i,h} \setminus \partial \Omega$ that can be written as $\mathrm{I}_6$. 
 Hence, integration by parts yields 
\begin{align*}
	a_h(\tilde{u} - u_h, w \circ \Phi_h - \InterpUnfittedZ w \circ \Phi_h ) =  \mathrm{I}_1 + \mathrm{I}_2 + \mathrm{I}_6. 
\end{align*}
Inserting this identity into (\ref{eq:weak-conv_aux2}) establishes (\ref{eq:bound-ah-stab}). \par 
In view of (\ref{eq:from-a-to-ah}) and (\ref{eq:bound-ah-stab}) it now remains to show that 
\[ 
\sum\limits_{j=1}^{6}\mathrm{I}_j \lesssim \norm{w}_{H^1(\Omega)} ( h^q \norm{u}_{q} + \delta).
\]
Corollary~\ref{cor:conv-tnorm} and interpolation estimates for $\InterpUnfittedZ$ yield
\begin{align*}
\mathrm{I}_1 &\lesssim  (\StabGLS{ u_h - \Ext u \circ \Phi_h }{ u_h - \Ext u \circ \Phi_h })^{1/2}
h^{-1} \norm{ (w - \InterpUnfittedZ w) \circ \Phi_h  }_{ \Omega }  \\ 
	& \lesssim  \norm{w}_{H^1(\Omega)} (h^q \norm{u}_{q} + \delta).
\end{align*}
Similarly, by Corollary~\ref{cor:conv-tnorm} and interpolation estimates  
\begin{align*}
\mathrm{I}_2 & \lesssim (\StabCIP{ u_h - \Ext u \circ \Phi_h }{ u_h - \Ext u \circ \Phi_h })^{1/2}
 \sum\limits_{\el \in \mesh} h^{\frac{1}{2}} \norm{  ( w - \InterpBackgroundMeshZ w) \circ \Phi_h  }_{ \partial \Theta_h(\el) } \\ 
& \lesssim  \norm{w}_{H^1(\Omega)} (h^q \norm{u}_{q} + \delta).
\end{align*}
Using the estimate for $N^{c}_h(u_h,\InterpUnfittedZ w \circ \Phi_h )$ from Lemma~\ref{lem:Ah-continuity} along with $ 0 = \jump{ u }_{\Gamma} =  \jump{ \Ext u }_{\Gamma} = \jump{ \Ext u \circ \Phi_h }_{\Gamma_h}$ and Corollary~\ref{cor:conv-tnorm} as well as $H^1$-stability of $\InterpUnfittedZ$ we obtain 
\[
\mathrm{I}_3 +  \mathrm{I}_4 \lesssim  \norm{ \InterpUnfittedZ w \circ \Phi_h  }_{ s_h^{\ast} }  \tnorm{ (u_h - \Ext u \circ \Phi_h, z_h) } \lesssim \norm{w}_{H^1(\Omega)} (h^q \norm{u}_{q} + \delta).
\]
From Corollary~\ref{cor:conv-tnorm} and interpolation estimates 
\begin{align*}
	\mathrm{I}_6 & \lesssim (\StabNablaNormal{ u_h - \Ext u \circ \Phi_h }{ u_h - \Ext u \circ \Phi_h })^{1/2}  \norm{  ( w - \InterpBackgroundMeshZ  w) \circ \Phi_h }_{\frac{1}{2},h,\Gamma_h } \\
& \lesssim \norm{w}_{H^1(\Omega)} (h^q \norm{u}_{q} + \delta).
\end{align*}
Finally, the perturbations are bounded using continuity of the data extension and stability of the interpolation:
\[
\mathrm{I}_5 \lesssim \norm{ \delta f_h }_{\BrokenOmegah} \norm{  \InterpUnfittedZ w \circ \Phi_h  }_{\Omega} \lesssim \delta \norm{w}_{H^1(\Omega)}. 
\]
This concludes the argument.
\end{proof} 
Finally, we can deduce convergence in the target domain. To this end, we use Corollary~\ref{cor:conditional_stabiliy_estimate}  
which renders the resulting error estimate sensitive to the stability properties of the continuous problem.
\begin{theorem}[$L^2$-error estimate in $B$]\label{thm:L2B-error-estimate}
	Let $u \in H^{q+1}(\BrokenOmega) \cap W^{3,\infty}(\BrokenOmega) $ be the exact solution of (\ref{eq:PDE_bulk})-(\ref{eq:measurements}). 
	Let $(u_h,z_h) \in \curvedFes{\Theta}^{\Gamma} \times \curvedFes{\Theta}^{0} $ be the solution of (\ref{eq:discr_stab_var_form}).
Assume that $ \omega \subset B \subset \Omega$ such that $B \setminus \omega$ does not touch the boundary of $\Omega$.
Then there exists a $\tau \in (0,1)$ such that 
\[
 \norm{ u - u_h \circ \Phi_h^{-1}  }_{ B } \lesssim  h^{\tau q} \left(  \norm{u}_{q} + h^{-q} \delta  \right).
\]
\end{theorem}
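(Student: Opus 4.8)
The plan is to apply the conditional stability estimate (\ref{eq:cond_stab}) of Corollary~\ref{cor:conditional_stabiliy_estimate} to a globally $H^1$-conforming modification of the discrete error. The difficulty is that $u_h \circ \Phi_h^{-1}$ cannot be inserted into (\ref{eq:cond_stab}) directly, because the unfitted discretization produces a jump across the interface and hence $u_h \circ \Phi_h^{-1} \notin H^1(\Omega)$. I would therefore use the corrector $\varphi \in H^1(\Omega_1)$ from Lemma~\ref{lem:corr_IF_jumps}, whose trace on $\Gamma$ is designed to cancel the jump $\jump{u_h \circ \Phi_h^{-1}}_{\Gamma}$, and set
\[
e := u - u_h \circ \Phi_h^{-1} - R_1 \varphi .
\]
Since $\jump{R_1 \varphi}_{\Gamma} = \varphi|_{\Gamma} = -\jump{u_h \circ \Phi_h^{-1}}_{\Gamma}$, the jump of $e$ across $\Gamma$ vanishes; together with $u \in H^1(\Omega)$ (which holds by (\ref{eq:jump_IF})) this gives $e \in H^1(\Omega)$, so (\ref{eq:cond_stab}) is applicable to $e$.

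The core of the argument is to control the three quantities on the right-hand side of (\ref{eq:cond_stab}). For the data term I would use $\Phi_h|_{\omega} = \ID$ and $\Ext u = u$ on $\omega$, so that $\norm{u - u_h \circ \Phi_h^{-1}}_{\omega} = \norm{\Ext u \circ \Phi_h - u_h}_{\omega}$ is bounded through Corollary~\ref{cor:conv-tnorm}; adding the corrector contribution on $\omega$ (present only if $\omega \subset \Omega_1$), bounded by $\norm{\varphi}_{H^1(\Omega_1)}$, and invoking Lemma~\ref{lem:corr_IF_jumps} yields $\norm{e}_{\omega} \lesssim h^q \norm{u}_{q} + \delta$. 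For the residual I would exploit that, since $e \in H^1(\Omega)$ and $\mu,\rho$ are piecewise constant, the distributional action satisfies $\langle \calL e, w \rangle = a(e,w)$ for every $w \in H^1_0(\Omega)$; splitting $a(e,w) = a(u - u_h \circ \Phi_h^{-1}, w) - a(R_1 \varphi, w)$, the first summand is bounded by Lemma~\ref{lem:weak-conv} and the second by continuity of $a(\cdot,\cdot)$ and Lemma~\ref{lem:corr_IF_jumps}, giving $\norm{\calL e}_{H^{-1}(\Omega)} \lesssim h^q \norm{u}_{q} + \delta$. Finally, for the crude global bound $\norm{e}_{\Omega}$ I would invoke the $L^2$-part of the Tikhonov term $\StabTikh{\cdot}{\cdot}$ in (\ref{eq:def-Tikh}), which supplies the $h^{-q}$-scaled control $\norm{u_h - \Ext u \circ \Phi_h}_{\BrokenOmegah} \lesssim h^{-q}\norm{u_h - \Ext u \circ \Phi_h}_{s_h} \lesssim \norm{u}_{q} + h^{-q}\delta$; combined with the norm equivalence of Lemma~\ref{lem:norm-equiv}, continuity of the extension, and the corrector bound this gives $\norm{e}_{\Omega} \lesssim \norm{u}_{q} + h^{-q}\delta$.

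It then remains to combine these into the Hölder interpolation. Writing $h^q \norm{u}_{q} + \delta = h^q(\norm{u}_{q} + h^{-q}\delta)$, the estimate (\ref{eq:cond_stab}) delivers
\[
\norm{e}_{B} \lesssim \big( h^q(\norm{u}_{q} + h^{-q}\delta) \big)^{\tau} \big( \norm{u}_{q} + h^{-q}\delta \big)^{1-\tau} = h^{\tau q}\big( \norm{u}_{q} + h^{-q}\delta \big).
\]
I would conclude by adding the corrector back via the triangle inequality, $\norm{u - u_h \circ \Phi_h^{-1}}_{B} \le \norm{e}_{B} + \norm{R_1 \varphi}_{B}$, and absorbing $\norm{R_1 \varphi}_{B} \le \norm{\varphi}_{H^1(\Omega_1)} \lesssim h^q \norm{u}_{q} + \delta \le h^{\tau q}(\norm{u}_{q} + h^{-q}\delta)$, using $\tau \in (0,1)$ and $h \lesssim 1$ so that $h^q \le h^{\tau q}$. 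This produces the asserted bound.

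I expect the main obstacle to be the conceptual step of restoring global $H^1$-conformity without degrading the rates: the corrector must simultaneously have small $H^1(\Omega_1)$-norm, so that it does not pollute any of the three terms, and be governed entirely by the stabilization — exactly the content of Lemma~\ref{lem:corr_IF_jumps}, which links the geometric error $h^q$ and the $H^1$-Tikhonov penalty to the interface jump. A secondary subtle point is the identification $\langle \calL e, w \rangle = a(e,w)$, which relies on $e$ being globally $H^1$ so that no spurious interface flux term survives and Lemma~\ref{lem:weak-conv} can be applied.
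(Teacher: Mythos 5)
Your proposal is correct and follows essentially the same route as the paper's proof: your conforming error $e = u - u_h \circ \Phi_h^{-1} - R_1\varphi$ is exactly the paper's $u - \bar{u}_h$ with the corrector from Lemma~\ref{lem:corr_IF_jumps}, and you bound the three right-hand terms of (\ref{eq:cond_stab}) by the same means (data term via $\Phi_h|_{\omega}=\ID$ and Corollary~\ref{cor:conv-tnorm}, residual via the splitting $a(e,w)=a(u-u_h\circ\Phi_h^{-1},w)-a(R_1\varphi,w)$ with Lemma~\ref{lem:weak-conv}, global $L^2$-bound via the $h^{-q}$-scaled Tikhonov control). The concluding H\"older combination and the triangle-inequality absorption of $\norm{\varphi}_{H^1(\Omega_1)}$ likewise coincide with the paper's argument.
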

\begin{proof}
As explained above, to apply the conditional stability estimate from Corollary~\ref{cor:conditional_stabiliy_estimate} we first have to resolve the issue that $u_h \circ \Phi_h^{-1} \notin H^1(\Omega) $.
To correct for jumps across the interface, we add the function $\varphi$ as defined in (\ref{eq:PDE_jump_correction}) to $u_h \circ \Phi_h^{-1}$, that is 
\[
\bar{u}_h = ( \bar{u}_{h,1},  \bar{u}_{h,2}) := ( u_{h,1} \circ \Phi_h^{-1} + \varphi,  u_{h,2} \circ \Phi_h^{-1} ). 
\]
Using $\Phi_h(\Gamma_h) = \Gamma$ and that $\varphi|_{\Gamma} = - \jump{u_h \circ \Phi_h^{-1} }_{\Gamma}$ we obtain
\begin{align*}
\bar{u}_{h,2}|_{\Gamma} = u_{h,2}|_{\Gamma_h} = u_{h,1}|_{\Gamma_h} - \jump{ u_h  }_{\Gamma_h} 
= u_{h,1}  \circ \Phi_h^{-1}|_{\Gamma} -  \jump{ u_h  \circ \Phi_h^{-1} }_{\Gamma} 
 = \bar{u}_{h,1}|_{\Gamma}. 
\end{align*}
It follows that $\bar{u}_h \in H^1(\Omega)$ and hence Corollary~\ref{cor:conditional_stabiliy_estimate} yields  
\begin{align}
& \norm{ u - u_h \circ \Phi_h^{-1}  }_{ B }  \leq \norm{ u - \bar{u}_h }_{ B} + \norm{\varphi}_{\Omega_1}  \nonumber \\
& \lesssim h^{q} \norm{u}_{q} + \delta + \left( \norm{ u -  \bar{u}_h }_{\omega} + \norm{ \bar{r}_h }_{ H^{-1}(\Omega) } \right)^{\tau} \left( \norm{ u -  \bar{u}_h }_{\Omega} + \norm{ \bar{r}_h }_{ H^{-1}(\Omega) } \right)^{ 1 - \tau} \label{eq:cond-sta-appl}, 
\end{align}
	where  $ \bar{r}_h := \calL (u -  \bar{u}_h) $ and Lemma~\ref{lem:corr_IF_jumps} was applied to estimate $ \norm{\varphi}_{\Omega_1}$. 
We have to estimate the terms on the right hand side of (\ref{eq:cond-sta-appl}). \par 
From Lemma~\ref{lem:corr_IF_jumps} and Lemma~\ref{lem:weak-conv} we obtain for any $w \in H^1_0(\Omega)$ that 
\begin{align*}
a( u - \bar{u}_h ,w  )  &=  -(\mu_1 \nabla \varphi, \nabla w)_{\Omega_1} + (\rho_1 \varphi, w )_{\Omega_1} + a( u - u_h \circ \Phi_h^{-1} , w ) \\ 
	& \lesssim   (h^{q } \norm{u}_{q} + \delta) \norm{w}_{H^1(\Omega)}.
\end{align*}		
Hence, 
\[
\norm{ \bar{r}_h }_{ H^{-1}(\Omega) } =: \sup_{w \in H^1_0(\Omega)}  \frac{a( u - \bar{u}_h ,w  ) }{ \norm{w}_{H^1(\Omega)} } 
\lesssim 
h^{q } \norm{u}_{q} + \delta. 
\]
Using the assumption $\Phi_h|_{\omega} = \ID$, Corollary~\ref{cor:conv-tnorm} and Lemma~\ref{lem:corr_IF_jumps} we have 
\[
\norm{u - \bar{u}_h }_{\omega} \lesssim \norm{ u - u_h \circ \Phi_h^{-1}}_{\omega} + \norm{\varphi}_{\Omega_1}   \sim \norm{ u \circ \Phi_h  - u_h  }_{\omega} + \norm{\varphi}_{\Omega_1}  
\lesssim h^{q } \norm{u}_{q} + \delta. 
\]
By Lemma~\ref{lem:norm-equiv} (a) and Lemma~\ref{lem:corr_IF_jumps}:  
\begin{align*}
\norm{u - \bar{u}_h }_{\Omega}  \lesssim \norm{ u - u_h \circ \Phi_h^{-1}}_{\BrokenOmega} + \norm{\varphi}_{\Omega_1}  \lesssim \norm{ \Ext u \circ \Phi_h - u_h  }_{\BrokenOmegah} + h^{q } \norm{u}_{q} + \delta.
\end{align*}
To control the remaining term, we use the definition of $\StabTikh{\cdot}{\cdot}$ and Corollary~\ref{cor:conv-tnorm}: 
\[
	\norm{ \Ext u \circ \Phi_h - u_h  }_{\BrokenOmegah} \lesssim h^{-q} \StabTikh{ \Ext u \circ \Phi_h - u_h }{ \Ext u \circ \Phi_h - u_h  } \lesssim \norm{u}_{q} + h^{-q} \delta.
\]
Inserting these estimates into (\ref{eq:cond-sta-appl}) yields the claim:
\begin{align*}
	\norm{ u - u_h \circ \Phi_h^{-1}  }_{ B } & \lesssim  h^{q} \norm{u}_{q} + \delta + \left(h^q \left[ \norm{u}_{q} + h^{-q} \delta \right] \right)^{\tau}  \left(  \norm{u}_{q} + h^{-q} \delta  \right)^{1 - \tau} \\
	& \lesssim h^{\tau q} \left(  \norm{u}_{q} + h^{-q} \delta  \right ).
\end{align*}
\end{proof}


\section{Numerical experiments}\label{section:numexp}
We implemented the method in \texttt{ngsxfem} \cite{ngsxfem2021} - an Add-on which enriches the finite element library \texttt{NGSolve} \cite{JS97,JS14} to allow for the use of unfitted discretizations. 
A selection of numerical experiments will be presented to expose strengths and weaknesses of our approach. 
Reproduction material for the presented experiments is avalaible at \texttt{zenodo}\footnote{\url{https://doi.org/10.5281/zenodo.13328144}} \cite{BP23} in the form of a docker image. 
\par 
Throughout the experiments we use the levelset functions
\begin{equation}\label{eq:lset_numexp}
\phi =  \norm{x}_{\ell}  - 1, \qquad \norm{x}_{\ell} := \left(\sum\limits_{j=1}^d (x_j)^{\ell}\right)^{1/{\ell}}, \qquad \text{for } x \in \mathbb{R}^d,   
\end{equation}
for $\ell \in \{2,4\}$ to represent the geometry.
We work on a sequence of quasi-uniform simplicial meshes which are not adapted to the interface. Linear systems are solved using a sparse direct solver. \par 
Our investigation will focus on the influence of the interface on the numerical solution of the unique continuation problem. Recall that the  
H\"older exponent $\tau$ and the constant $C$ in the conditional stability estimate (\ref{eq:cond_stab}) depend in an unknown manner on the interface and 
the coefficients of the PDE. They also depend on certain convexity properties of the data and target domain as has been thoroughly 
investigated in reference \cite{BNO19} for the constant coefficient Helmholtz equation. Here we will only briefly touch on this aspect since our work 
is mainly devoted to the study of the material interface.

\subsection{Pure diffusion in dimension $d=2$}\label{ssection:numexp-pure-diffusion}

We will start with a purely diffusive problem, i.e.\ $\rho_i = 0, i=1,2$, since it is well-known that unique continuation for 
the Helmholtz case can be particularly challenging, see e.g. \cite{BNO19,BDE21,BP22}. 
We consider the levelset function given in (\ref{eq:lset_numexp}) for $\ell = 4$, where the objective is to continue the solution given in $\omega = [-0.5,0.5]^2$ across the interface into $B = [-1.25,1.25]^2$. A sketch of the geometrical configuration is provided in Figure~\ref{fig:squares}.
We will consider noise-free data, i.e.\ $\delta = 0$ in (\ref{eq:delta-def}), sampled from the exact solution 
\begin{equation}\label{eq:refsol-diffusion}
u =  \begin{cases}  
   \frac{1}{\sqrt{2}} \left( 1 + \pi  \frac{ \mu_1 }{ \mu_2 } \right) - \cos\left( \frac{\pi \norm{x}_4^4 }{ 4 }  \right)  & \text{ in } \Omega_1, \\ 
    \frac{ \mu_1 \pi  }{ \mu_2 \sqrt{2} }  \norm{x}_4     & \text{ in } \Omega_2, 
\end{cases}
\end{equation}

\begin{figure}[htbp]
\centering
\begin{subfigure}[b]{0.3\textwidth}
\centering
\includegraphics[scale=0.9]{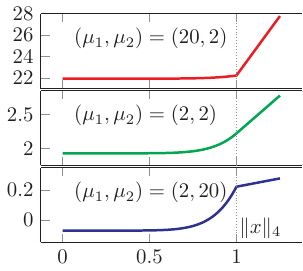}
\subcaption{ Reference solution.  }%
\label{fig:nonconvex-refsol}
\end{subfigure}
\begin{subfigure}[b]{0.3\textwidth}
\centering
\includegraphics[scale=0.135]{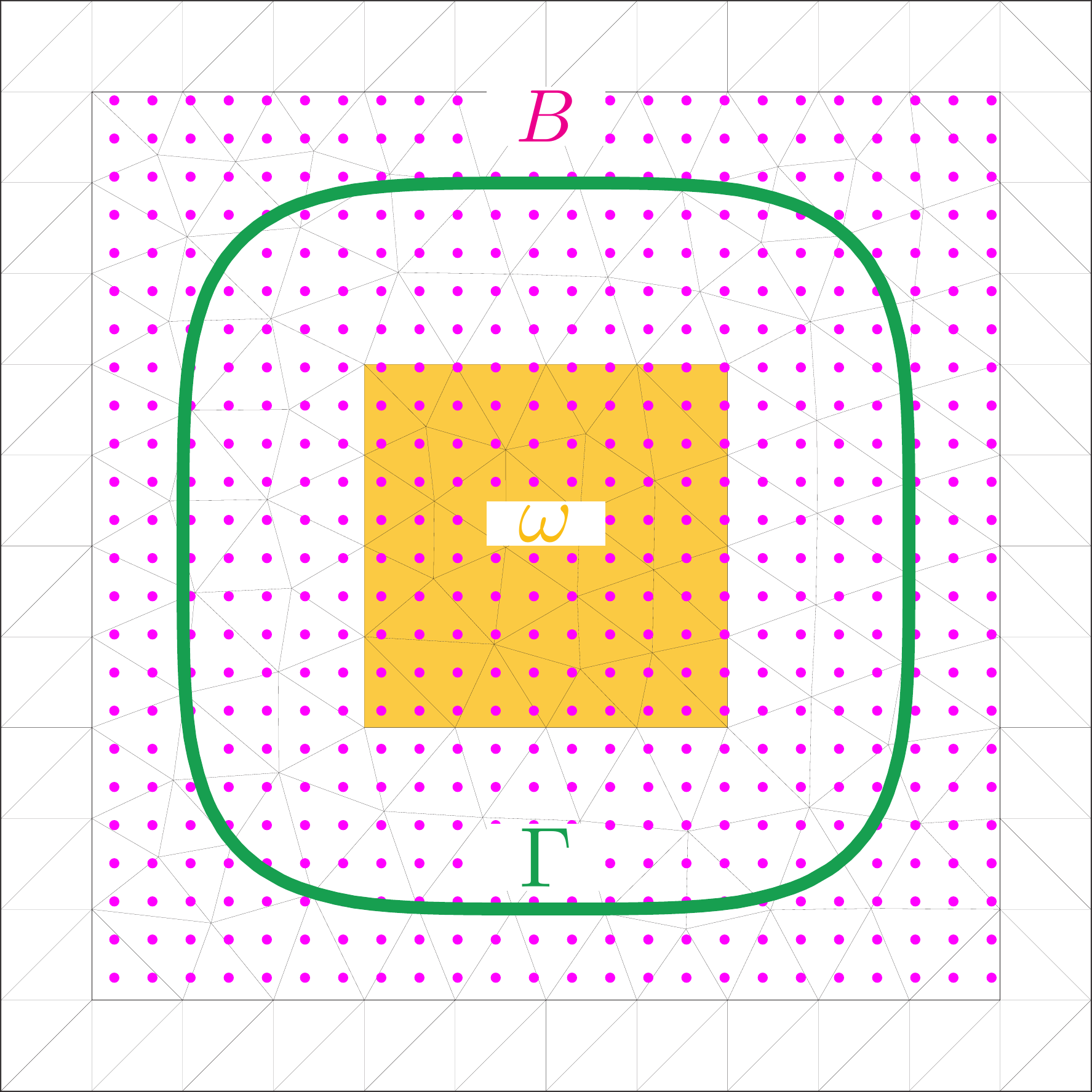}
\subcaption{Setup: Pure diffusion.}%
\label{fig:squares}
\end{subfigure}
\begin{subfigure}[b]{0.3\textwidth}
\centering
\includegraphics[scale=0.135]{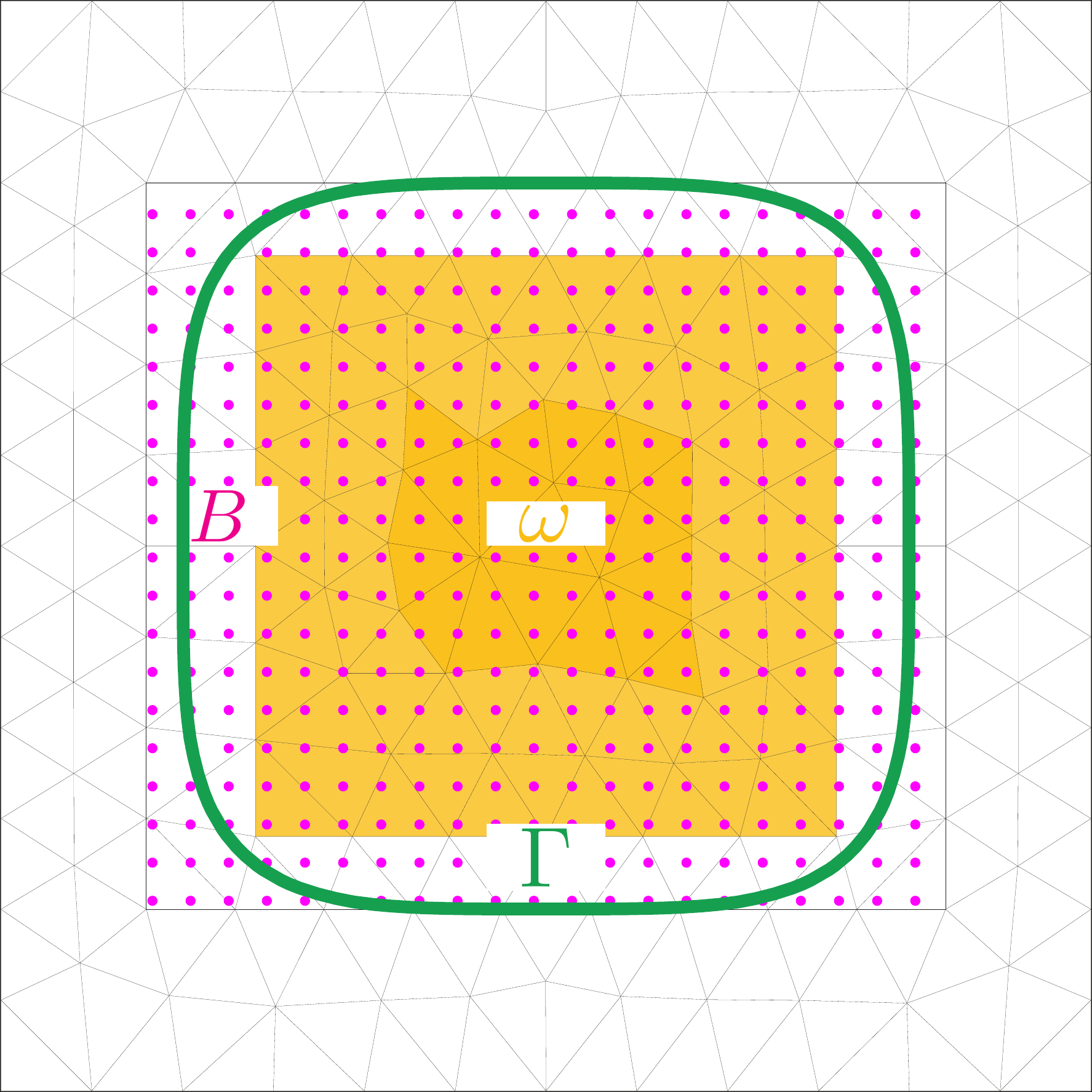}
\subcaption{Setup: Helmholtz.}%
\label{fig:squares-easy}
\end{subfigure}
	\caption{Figure (A) shows the reference solution (\ref{eq:refsol-diffusion}) in the purely diffusive case as a function of $\norm{x}_4$ for different levels of the contrast. Figure (B) and (C) display a sketch of the geometrical setup for the experiments of Section \ref{ssection:numexp-pure-diffusion}, respectively \ref{sssection:Helmholtz-contrast}. }
\label{fig:nonconvex-diffusion}
\end{figure}
which is shown in Figure~\ref{fig:nonconvex-refsol} as a function of $\norm{x}_4$ for different 
levels of the contrast. Note that a kink occurs at the interface for $\mu_1 \neq \mu_2$. \par
\begin{figure}[htbp]
\centering
\includegraphics[width=\textwidth]{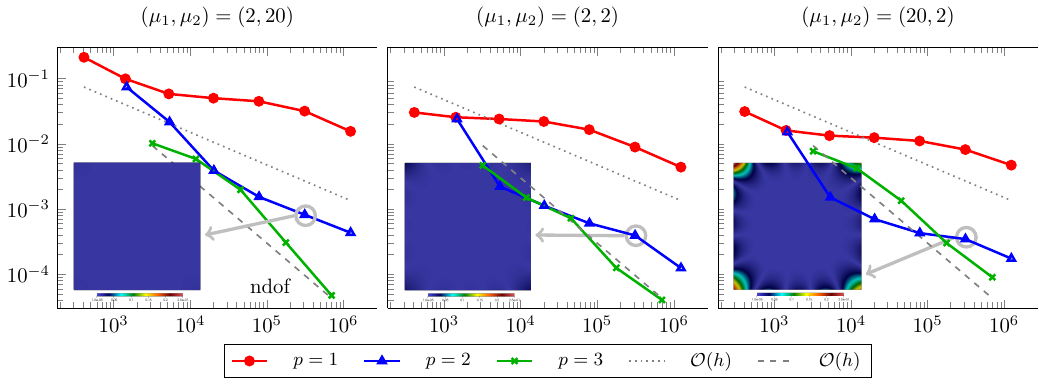}
	\caption{Dependence of the relative $L^2$-error in $B$ on the contrast for a pure diffusion problem in the geometry shown in Figure~\ref{fig:squares}.
	In the insets the absolute error for the cases indicated by the arrows is shown. Here we used $q=p$. On the $x$-axis the number of degrees of freedom `ndof' is displayed.} 
\label{fig:non-convex-diffusion}
\end{figure}
The relative $L^2$ errors in $B$ for different levels of the contrast are shown in Figure~\ref{fig:non-convex-diffusion}.
Here we have set the weights in the numerical flux to $\kappa_1 = \mu_2 / (\mu_1 + \mu_2)$ 
and $\kappa_2 = \mu_1 / (\mu_1 + \mu_2)$, which is a popular choice in the literature \cite{D03,BZ06,ESZ09} for contrast problems. 
We observe that changing the contrast by one order of magnitude seems to have little influence on the convergence rate in 
the purely diffusive regime. The setup with $(\mu_1,\mu_2) = (20,2)$ appears to be slightly more challenging than the others as 
the absolute error is observed to be high near the corners of the domain.
Nevertheless, we still obtain reasonably accurate solutions in the target domain after a few refinement steps. 
\par 
As an additional experiment let us investigate the importance of the stabilization terms. 
In numerical experiments it is common practice to rescale these terms by some positive constants 
to optimize the preasymptotic convergence behavior. 
The stabilization terms $\StabCIP{\cdot}{\cdot}$ and $\StabGLS{\cdot}{\cdot}$  are a standard ingredient 
of methods based on the framework from \cite{B13}.
Parameter studies on the optimal choice of the corresponding scaling parameters can be found in the literature, see e.g.\ \cite{B13,BHL18a,N20,BP22}.
Here, we focus on the novel terms which appeared in this work, i.e.\ 
the choice of $\gamma_{\mathrm{IF}}$ for the interface stabilization in (\ref{eq:IF-stab-comb}) 
and the choice of $\alpha_2$ in (\ref{eq:def-Tikh}) which we introduced to counter propagation of geometric errors.
The dependence of the relative $L^2$-error on the size of these parameters is displayed in Figure~\ref{fig:non-convex-diffusion-Stab}.
This experiment has been carried out on the second finest mesh used for the previous convergence study.
We discuss the results of the different stabilizations separately:
\begin{figure}[htbp]
\centering
\includegraphics[width=\textwidth]{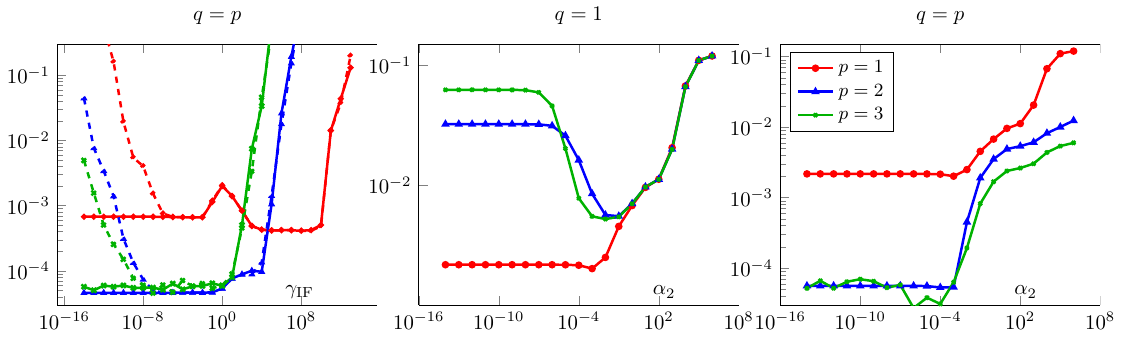}
	\caption{Dependence of the relative $L^2$-error in $B$ in terms of the choice of stabilization parameters. We consider a pure diffusion problem with $(\mu_1,\mu_2) = (20,2)$ in the geometry shown in Figure~\ref{fig:squares}. The solid lines in the left plot show the results for the method as defined and analyzed in this paper, while dashed lines display the results for an alternative method in which the term $N_h^c$ in the bilinear form is omitted.} 
\label{fig:non-convex-diffusion-Stab}
\end{figure}
\begin{itemize}
	\item The solid lines in the left plot of Figure~\ref{fig:non-convex-diffusion-Stab} show that our method appears to be stable and accurate even when the stabilization on the interface is completely omitted, i.e.\  $\gamma_{\mathrm{IF}}=0$. This is in fact thanks to the presence of the Nitsche term $N_h^c$ in the bilinear form $A_h$, see (\ref{eq:Nitsche-terms-def})-(\ref{def:A-bil}). If the latter term is omitted (dashed lines), the error increases dramatically as $\gamma_{\mathrm{IF}}$ goes to zero. 
Note that dropping $N_h^c$ means to sacrifice adjoint consistency of the Nitsche formulation, which seems to be of importance here.  
\item If the geometry is sufficiently resolved ($q=p$) it appears that $\alpha_2$ can be set to zero as well. This is reasonable since we introduced this stabilization to counter geometrical errors that are of order $\mathcal{O}(h^q)$. However, when the geometry is underresolved (central plot with $q=1$) these geometric errors become significant and we indeed observe that the error in the target domain increases noticably when $\alpha_2$ is chosen too small and $p > 1$. For $p=1$ on the other hand,  we can always set $\alpha_2 =0$ as the discrete Poincar\'{e} inequality given in \cite[Lemma 2]{BHL18a} ensures that the gradient weighted with $h$ is already controlled by the other stabilization terms.
\end{itemize}

\subsection{Helmholtz problem in dimension $d=2$}\label{ssection:numexp-Helmholtz}

Let us now tackle the Helmholtz problem while keeping the levelset function from Section~\ref{ssection:numexp-pure-diffusion}.
Here, we set $\rho_i = k_i^2 $ where $k_i >0$ is the wavenumber in the $i$-th subdomain and use the reference solution 
\begin{equation}\label{eq:refsol-oscillatory}
u =  \begin{cases}  
    C_1 \cos( k_1 \norm{x}_4^4 ) + C_2   & \text{ in } \Omega_1, \\ 
     \sin( k_2 \norm{x}_4^4  )  & \text{ in } \Omega_2, 
\end{cases}
\end{equation}
for 
\begin{equation}\label{eq:refsol_Helmholtz_consts} 
C_1 := - \frac{k_2 \mu_2}{  k_1 \mu_1} \frac{ \cos(k_2) }{ \sin(k_1)  }  \text{ and } C_2 := \sin(k_2) - C_1 \cos(k_1). 
\end{equation}

\subsubsection{Dependence on wavenumber and contrast ratio}\label{sssection:Helmholtz-contrast}
For the Helmholtz case we consider the geometrical configuration shown in Figure~\ref{fig:squares-easy}. 
This setup is slighly less challenging than the one considered in the purely diffusive case as the data domain 
has been extended to $\omega = [-0.8,0.8]^2$ whereas the target domain was reduced to $B = [-1.1,1.1] \times [-1.0,1.0]$.
The reason for this simplification becomes immediately clear when comparing the plots of the absolute errors for the Helmholtz case
shown in the insets of Figure~\ref{fig:ball-4-norm-squares-Helmholtz-contrast-rev} with the ones for the purely diffusive case 
given in Figure~\ref{fig:non-convex-diffusion}. Increasing the wavenumber $k_2$ in $\Omega_2$ by a factor of six increases the 
error close to the boundary of the domain enormously. Whereas convergence rates of\footnote{Here we use $p=q$.} $\mathcal{O}(h^{  \tau (q+1) })$ 
with $\tau$ close to one are observed for $k_2 = 1$, the rate does not exceed second order for $k_2 = 6$. 
The stability is even so poor that polynomial order $p=3$ does not perform better than order $p=2$. 
Notice that the performance is equally poor for a contrast ratio of ten, i.e.\ for $(\mu_1,\mu_2) = (2,20)$ as for the case
of no contrast $(\mu_1,\mu_2) = (2,2)$. The realative errors are even a bit lower in the former case, presumably because an increase of $\mu_2$
reduces the effective wavenumber in subdomain $\Omega_2$. In conclusion, this experiment indicates that for the considered setup the wavenumber 
outside the data domain is the decisive factor determining the stability of the problem.

\begin{figure}[htbp]
\centering
\includegraphics[width=\textwidth]{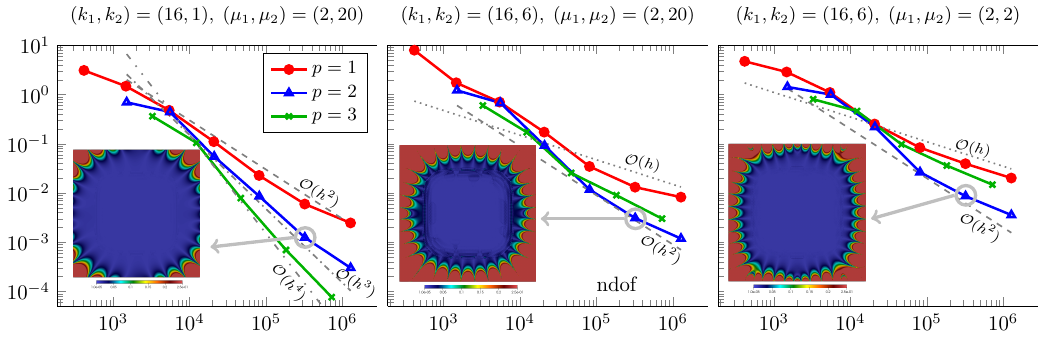}
\caption{Dependence of the relative $L^2$-error in $B$ on the coefficients for a Helmholtz problem in the geometry shown in Figure~\ref{fig:squares-easy}.
In the insets the absolute error for the cases indicated by the arrows is shown. Here we used $q=p$.  } 
\label{fig:ball-4-norm-squares-Helmholtz-contrast-rev}
\end{figure}

\subsubsection{Convex case}\label{sssection:Helmholtz-convex}

As mentioned before, the stability of unique continuation problems with constant coefficients is known to be linked to the geometry of the data and target domains, see e.g.\ numerical experiments in \cite{BNO19,BDE21,BP22}. 
Error estimates that are robust in the wavenumber have been proven in reference \cite{BNO19} when the target domain is contained in the convex hull of the data domain. We consider such a case here by setting
\begin{align*}
 \Omega = [-1.5,1.5]^2, \quad 
\quad B =  \Omega \setminus  [-1.5, 1.5] \times [1.25,1.5], \quad
\omega = B \setminus [-1.25,1.25]^2
\end{align*}
and using the same levelset function as before. 
The geometrical configuration and the coarsest mesh for our convergence studies are shown in Figure~\ref{fig:convex-geom}.
Note that we cannot expect the results from \cite{BNO19} to carry over directly to our setting since they apply to a problem with constant coefficients only.
Here we consider the case of the Helmholtz equation with jumping coefficients and
 reference solution (\ref{eq:refsol-oscillatory}). The inset of Figure~\ref{fig:convex-Helmholtz-noise} shows a plot of this function for parameters 
$k_1 = 16, k_2 = 2, \mu_1 = 1, \mu_2 = 2 $. 
\begin{figure}[htbp]
\centering
\begin{subfigure}[b]{.55\textwidth}
\centering
\includegraphics[width=1.1\textwidth]{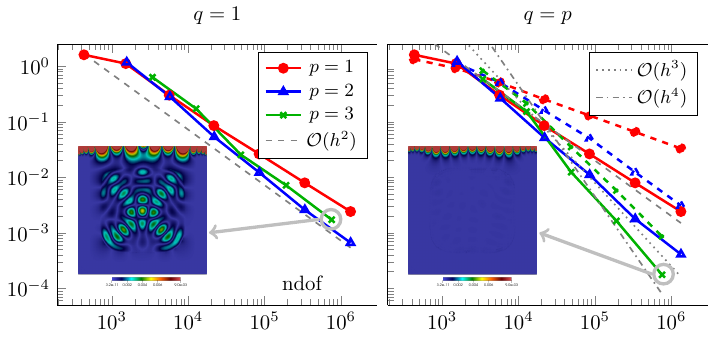}
\subcaption{Relative error for $(k_1,k_2) = (16,2), (\mu_1,\mu_2) = (1,2)$.}
\label{fig:convex-relerr-exact-data}
\end{subfigure}
\quad
\begin{subfigure}[b]{.4\textwidth}
\centering
\includegraphics[scale=0.14]{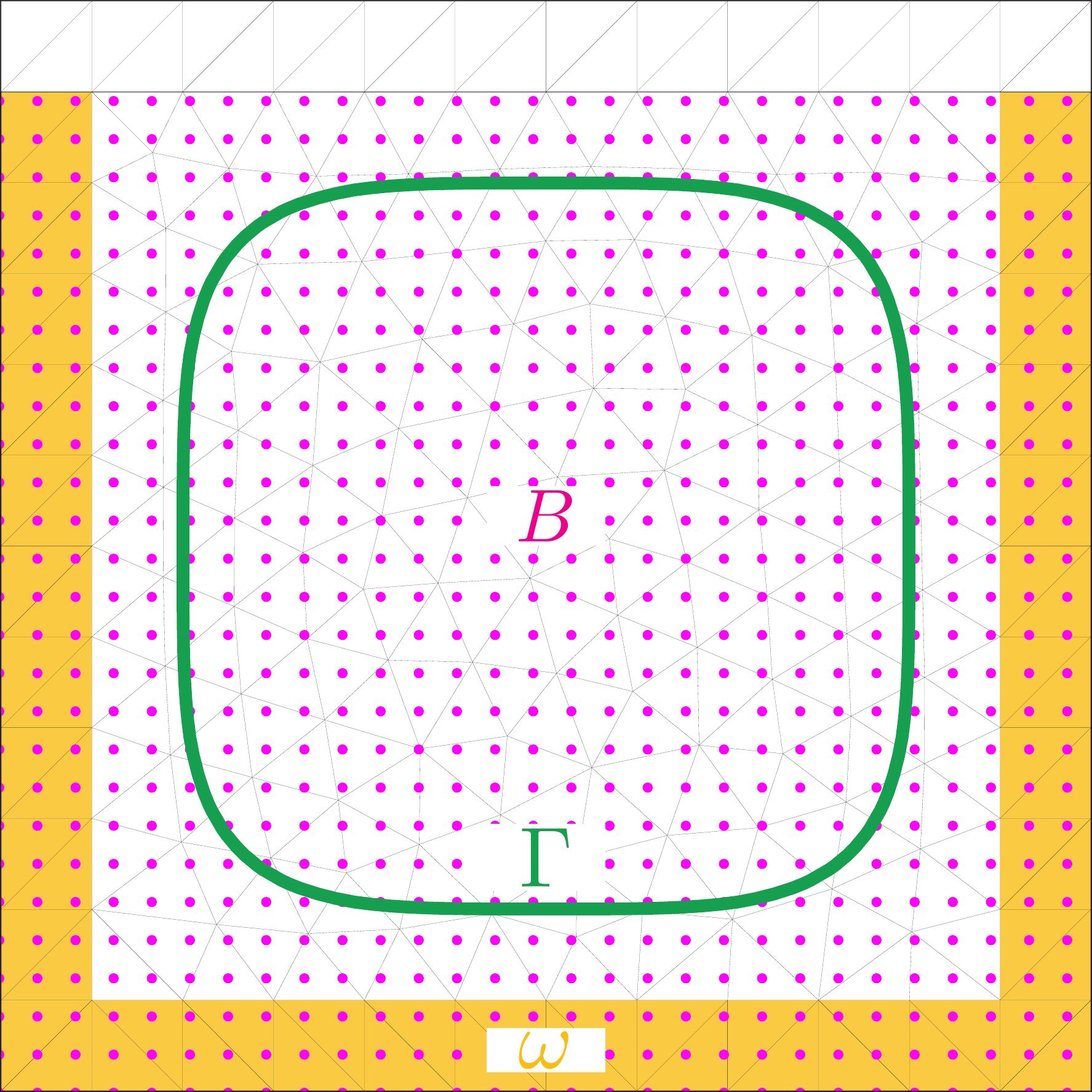}
\subcaption{Geometry.}
\label{fig:convex-geom}
\end{subfigure}
\begin{subfigure}[b]{.99\textwidth}
\centering
\includegraphics[width=.95\textwidth]{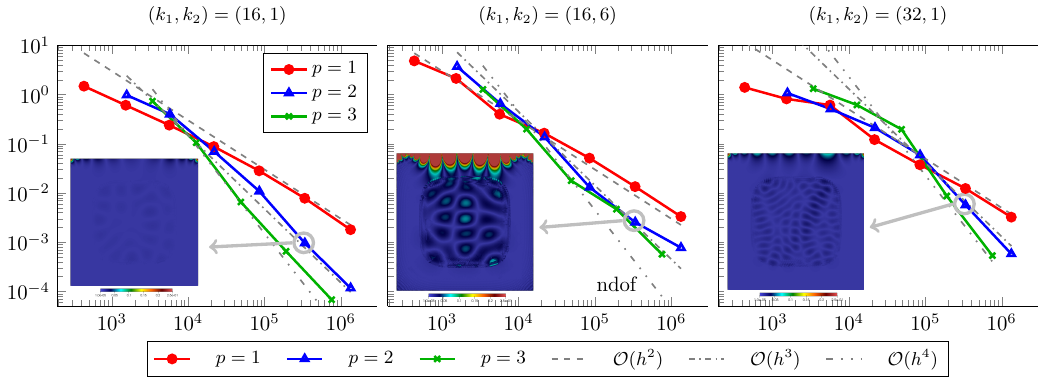}
\subcaption{Dependence of relative error on the wavenumber for $(\mu_1,\mu_2) = (2,20)$ using $q=p$.}
\label{fig:ball-4-norm-convex-Helmholtz-wavenumber}
\end{subfigure}
\caption{ Results for a Helmholtz problem using exact data. 
Solid and dashed lines in (A) show the relative $L^2$, respectively $H^1$-semi error, in the target domain $B$. In the insets the absolute error for the cases indicated by the arrows is shown. } 
\label{fig:convex-Helmholtz}
\end{figure}
\paragraph{Influence of geometry approximation}
In Figure~\ref{fig:convex-relerr-exact-data} the relative $L^2$-errors in the target domain under mesh refinement for a poorly resolved geometry ($q=1$) are compared with 
the ones based on a high-order approximation ($q=p$).  
For $q=1$ the convergence for any polynomial degree $p$ is limited by the geometrical error, i.e.\
$\dist(\Gamma, \Gamma_h) \lesssim \mathcal{O}(h^{2})$, introduced by the piecewise linear 
approximation of the interface. 
Interestingly the absolute error displayed in the inset of Figure~\ref{fig:convex-relerr-exact-data} (left) 
looks like a spurious resonance and a strong pollution on part of the boundary where no data is given can be observed. 
When increasing the resolution of the geometry to $q=p$ (central plot in Figure~\ref{fig:convex-Helmholtz}) these effects are reduced significantly.
We observe a convergence rate of $\mathcal{O}(h^{\tau q})$ in the $H^1$-semi norm with $\tau$ close to one and $\mathcal{O}(h^{  \tau (q+1) })$ in the $L^2$-norm similar 
as in the experiment shown in left panel of Figure~\ref{fig:ball-4-norm-squares-Helmholtz-contrast-rev}.
This is better than predicted by our theoretical results, which in particular do not guarantee convergence in the $H^1$-semi norm at all. 
This is because the conditional stability estimate (\ref{eq:cond_stab}) merely yields
control of the $L^2$-norm in the target domain. 
If such an estimate giving control over the $H^1$-norm was available, as e.g.\ derived in 
\cite[Corollary 3]{BNO19} for the Helmholtz equation \textit{without interfaces} in a specific convex geometric setting, then inspection of 
the proof of Theorem~\ref{thm:L2B-error-estimate} shows that rates of  $\mathcal{O}(h^{\tau q})$ in 
the $H^1$-norm would follow. \par 
\paragraph{Influence of wavenumber} 
Figure~\ref{fig:ball-4-norm-convex-Helmholtz-wavenumber} shows the effect of varying the wavenumber $(k_1,k_2)$ while keeping $(\mu_1,\mu_2)$ fixed. 
For $(k_1,k_2) = (16,1)$ we observe a rate of $\mathcal{O}(h^{\tau (q+1)})$ with $\tau$ close to one. 
Increasing $k_1$ to $32$ introduces a preasymptotic regime (up to about $10^5$ degrees of freedom) 
in which a reduced rate is observed but does not change the asymptotic rate.
However, increasing $k_2$ to $6$ appears to impact the asymptotic rate negatively and leads to a large increase of the absolute error at the top of the domain.
It seems that spurious modes from the top of the domain creep into the convex hull and are reflected at the interface. 
Even though we still observe a rate of about $\mathcal{O}(h^3)$ for $p=q=3$ when measuring in the convex hull which is significantly better 
than in the central plot of Figure~\ref{fig:ball-4-norm-squares-Helmholtz-contrast-rev}, where we sought to continue the solution outside the convex hull of the data,
the full robustness as for the constant coefficient problem \cite{BNO19} is not valid. We expect that for variable coefficient problems it will be necessary to strengthen the convexity condition describing the geometry of the target domain in which robust unique continuation is possible. In particular, in the considered setting this condition has to depend on the constrast of the wavenumber.

\paragraph{Perturbed data}
Let us now consider perturbed data of strength $\delta =  \tilde{\delta}_p  h^{p-\theta}$ for some $\theta \in \{0,1,2\}$ and with $\tilde{\delta}_p $ independent of $h$. In the implementation these perturbations are realized by populating the degrees of freedom of a finite element function with random noise and subsequent normalization. 
Setting $q=p$ the error estimate of Theorem~\ref{thm:L2B-error-estimate} predicts the following behavior:
\begin{equation}\label{eq:L2error-noise-theta}
\norm{ u - u_h \circ \Phi_h^{-1}  }_{ B } \lesssim h^{\tau p} \left( \norm{u}_{p} + \tilde{\delta}_p h^{-\theta} \right).
\end{equation}
Two different regimes can be distinguished:
\begin{figure}[htbp]
\centering
\includegraphics[width=\textwidth]{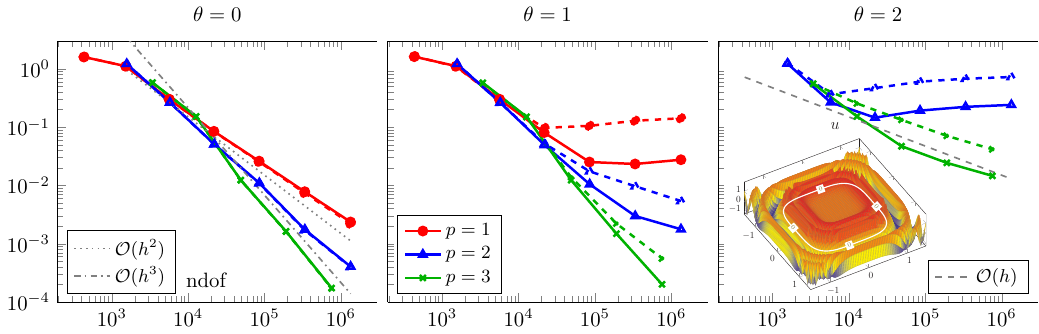}
\caption{ Results for a Helmholtz problem with $(k_1,k_2) = (16,2), (\mu_1,\mu_2) = (1,2)$ in the geoemetry of Figure~\ref{fig:convex-geom} (B) using perturbed data with $\delta =  \tilde{\delta}_p  h^{p-\theta}$.   
	Solid lines: $\tilde{\delta}_p \in [1,8,28]$ for $p \in [1,2,3]$, dashed line $\tilde{\delta}_p \in [5,24,80]$. All experiments use $q=p$.}
\label{fig:convex-Helmholtz-noise}
\end{figure}
\begin{itemize}
\item As long as $ \norm{u}_{p} h^{\theta} > \tilde{\delta}_p $ the first term in (\ref{eq:L2error-noise-theta}) will dominate. Then we expect to see the same convergence behavior as in Figure~\ref{fig:convex-relerr-exact-data} with exact data.
\item When $h$ is small enough such that $ \norm{u}_{p} h^{\theta} < \tilde{\delta}_p $, then the second term takes over and the rate deteriorates to $\mathcal{O}(h^{\tau p-\theta})$.
\end{itemize}
The numerical results shown in Figure~\ref{fig:convex-Helmholtz-noise} confirm this behavior. 
The refinement level at which the transition from one regime to the other occurs can be shifted
by scaling the coefficient $\tilde{\delta}_p$ of the noise as comparison of the solid and dashed 
lines in Figure~\ref{fig:convex-Helmholtz-noise} demonstrates.

\subsection{Fitted vs Unfitted methods in dimension $d=3$}\label{ssection:fitted-vs-unfitted}
Finally, we would like to provide a comparison between the unfitted method proposed in this work and a more traditional approach in 
which the mesh is fitted to the interface. The purpose of this comparison is to show that both methods yield similar results and are affected 
in the same way when the stability of the continuous problem deteriorates. In the fitted case we use the method from \cite{BNO19} which 
is easily generalized to allow for high-order finite elements and variable coefficients, see also \cite{BP22} for an application to an elastodynamics problem.  
\begin{figure}[htbp]
\centering
\includegraphics[width=\textwidth]{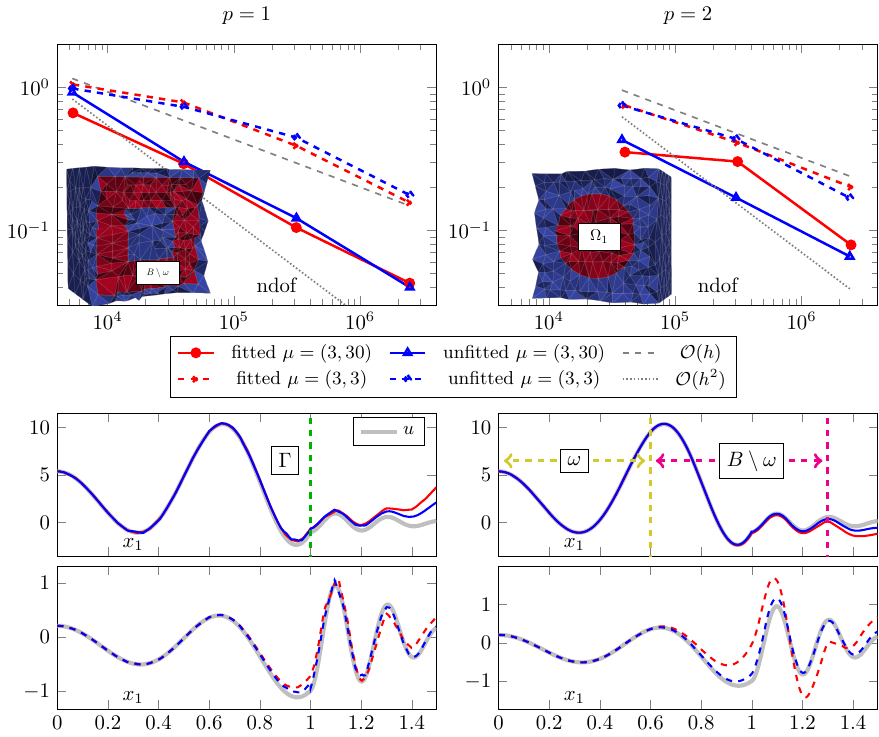}
\caption{ The upper panel compares the relative $L^2$ error in the target domain for the fitted and unfitted method 
for two different choices of $(\mu_1,\mu_2)$. The lower panel shows the continuous solution and numerical approximations as a function of $(x_1,0,0)$ evaluated 
on the finest mesh considered for the respective polynomial degree.
Note that the left panel presents the results for $p=1$, whereas the right panel shows $p=2$.
We use $p=q$ for the unfitted method and accordingly employ classical curved elements of order $p$ for the fitted method to improve the approximation of the geometry.
	} 
\label{fig:ball-3d-fitted-unfitted-contrast}
\end{figure}
For the geometry let us consider the unit cube $\Omega = [1.5,1.5]^3$ with data domain $\omega = [-0.6,0.6]^3$ and target domain $B = [-1.3,1.3]^3$. The interface is 
chosen to be the unit ball, i.e.\ levelset function (\ref{eq:lset_numexp}) with $\ell = 2$. A mesh which is fitted to 
this interface is displayed in the right inset of Figure~\ref{fig:ball-3d-fitted-unfitted-contrast}, whereas an unfitted mesh 
can be seen on the left. As in the previous sections we consider the case of the Helmholtz equation and use the following reference solution 
\[
u =  \begin{cases}  
    (C_1 \cos( k_1 r ) + C_2) \exp(-(r^2-1)^2)    & \text{ in } \Omega_1, \\ 
    \sin( k_2 r ) \exp(-(r^2-1)^2)   & \text{ in } \Omega_2, 
\end{cases}
\]
where $ r = \norm{x}_2$ and with $C_1$ and $C_2$ as in (\ref{eq:refsol_Helmholtz_consts}). 
We consider the fixed wavenumber $(k_1,k_2) = (10,30)$ and the two cases $(\mu_1,\mu_2) = (3,30)$ and $(\mu_1,\mu_2) = (3,3)$. 
Recall from Section~\ref{ssection:numexp-Helmholtz} that decreasing $\mu_2$ was observed to render the problem more challenging to 
solve. A plot of the reference solution as a function of the first coordinate, i.e.\ $r=\abs{x_1}$, is provided in the bottom panel of Figure~\ref{fig:ball-3d-fitted-unfitted-contrast}. \par 
The numerical results for piecewise affine and quadratic approximation are presented in the same figure.
In the upper panel of the figure we can clearly see that the fitted and unfitted method (red vs. blue lines) yield very similar convergence rates. 
Decreasing $\mu_2$ from $30$ to $3$ (solid vs. dashed lines) increases the error for both methods by the same amount. This indicates that the fitted and 
unfitted approach display the same sensitivity w.r.t.\ deteriorating stability of the continuous problem. 
Hence, the unfitted method is as suited as the fitted method for solving the unique continuation problem. In fact, the qualitative approximation 
of the function obtained with the unfitted method can in some cases even be better than the one obtained with the fitted method as 
the lower panel of Figure~\ref{fig:ball-3d-fitted-unfitted-contrast} demonstrates. \par 
As an outstanding issue for both approaches we mention the immense computational cost of solving the arising linear system with a direct solver, 
which limits the number of considered refinement levels. Actually, for the method with $p=2$ it is not clear whether the 
asymptotic convergence rate has been reached yet. Iterative solution approaches are hampered by the ill-conditioning of the linear systems 
which reflects the ill-posed character of the corresponding continuous problems. Recently, several interesting approaches to construct efficient preconditioners 
\cite{MNN17,AVBG17,ES09} for related problems have been proposed whose suitability for our setting would be an interesting question to investigate in future work.

\section{Conclusion}\label{section:conclusion} 
We have investigated an unfitted FEM for unique continuation across an interface. 
To establish error estimates in the unfitted setting, we had to introduce additional stabilization
terms on the interface and a weak Tikhonov penalization of the gradient in the bulk. 
The benefit of the latter has been observed in numerical experiments featuring geometrically 
underresolved regimes in which the distance between $\Gamma$ and $\Gamma_h$ is fairly large. 
At the analytical level we were able to understand the need for this particular stabilization by 
adopting a technique introduced in \cite{L_CMAME_2016,LR17} which allows for explicit control and analysis of geometric errors. It would be interesting to investigate whether our method and its analysis could be combined with other approaches for geometry approximation and numerical integration on unfitted geometries which have been proposed in the literature. In principle, any technique that can reduce the geometric error down to the level of the discretization error in a robust\footnote{In particular, guaranteeing positiveness of quadrature weights.}, accurate and (desirably) efficient way should be suitable.   \par
In numerical experiments we focused on the influence of the heterogeneity in the coefficients on our ability to extend the solution beyond the data domain. 
For Helmholtz problems we observed that the size of the wavenumber, particular outside the data domain, is the decisive factor that determines the 
stability of the problem. Contrasts in the diffusion coefficients of about one order of magnitude, on the other hand, appear to have a much smaller 
influence on the stability. In fact, increasing the contrast in the $\mu_i$ was even observed to be beneficial in some case, see e.g.\ Figure~\ref{fig:ball-3d-fitted-unfitted-contrast}. Moreover, we observed in Section~\ref{sssection:Helmholtz-convex} that to some extent an increased stability in parts of the convex hull of the data domain 
appears to hold. However, the full robustness inside the convex hull as shown in \cite{BNO19} for the homogeneous medium does not directly carry over to the interface problem.

\appendix

\section{Conditional stability estimate}\label{A:cond-stab}

From \cite[Theorem 1.1]{CW20} we get the following result: 
Let $u \in H^1(\Omega)$ be a solution of (\ref{eq:PDE_bulk}) where $f \in H^{-1}(\Omega)$ 
is of the form 
\begin{equation}\label{eq:f_form}
\langle f, v \rangle = (f_0,v)_{\Omega} + (F,\nabla v)_{\Omega} \text{  with  } \norm{f_0}_{ \Omega } + \norm{F}_{ \Omega} \leq \varepsilon, 
\end{equation}
where we use the common notation $\langle \cdot, \cdot \rangle$ to denote the duality bracket on $H^{-1} \times H_0^1$.
Then there exists $\bar{r} > 0$ such that if $ 0 < r_1 < r_2 < r_3 < \bar{r}$, $x_0 \in \Omega, \dis(x_0,\partial \Omega) > r_3$,  
there exist $C > 0, 0 < \tau < 1$ such that 
\begin{equation}\label{eq:three_ball_ieq}
\norm{u}_{ B_{r_2}(x_0) } \leq C \left( \norm{u}_{ B_{r_1}(x_0)} + \varepsilon \right)^{\tau}
\left( \norm{u}_{  B_{r_3}(x_0) } + \varepsilon \right)^{1-\tau}. 
\end{equation}
Here, $B_{r_j}(x_0)$ denotes the ball around $x_0$ with radius $r_j, j=1,2,3$.
We note that any $f \in H^{-1}(\Omega)$ with $\norm{f}_{H^{-1}(\Omega)} \leq \varepsilon /2$ can be written in the form (\ref{eq:f_form}).
Indeed, let $\phi \in H^1_0(\Omega) $ be the solution of the auxiliary problem  
\[ -\Delta \phi + \phi = f \in H^{-1}(\Omega).
\]
Then clearly,
\[
\langle f, v \rangle = (\phi,v)_{\Omega} + (\nabla \phi,\nabla v)_{\Omega} 
\]
for all $v \in H_0^1(\Omega)$.
So we can take $f_0 = \phi$ and $F = \nabla \phi$. 
Further, 
\[
\norm{f}_{ H^{-1}(\Omega) } 
= \sup_{v: \norm{v}_{ H^1(\Omega) = 1}  } \langle f, v \rangle 
= \sup_{v: \norm{v}_{ H^1(\Omega) = 1}  }  (\phi,v)_{\Omega} + (\nabla \phi,\nabla v)_{\Omega} 
	= \norm{\phi}_{ H^1(\Omega) }.
\]
Hence, 
\[
\norm{f_0}_{ \Omega } + \norm{F}_{ \Omega} 
\leq 2 \norm{\phi}_{ H^1(\Omega) }
= 2 \norm{f}_{ H^{-1}(\Omega) } 
	\leq \varepsilon.
\]
Thus (\ref{eq:three_ball_ieq}) yields
\[
\norm{u}_{ B_{r_2}(x_0) } \leq C \left( \norm{u}_{ B_{r_1}(x_0)} + \norm{\calL u}_{H^{-1}(\Omega)} \right)^{\tau}
\left( \norm{u}_{  B_{r_3}(x_0) } + \norm{\calL u}_{H^{-1}(\Omega)}  \right)^{1-\tau}. 
\]
The general case for $\omega \subset B \subset \Omega$ such that $B \setminus \omega$ does not touch the boundary stated in Corollary~\ref{cor:conditional_stabiliy_estimate} then follows by using a covering argument, see \cite[Section 5]{ARRV09} or also \cite{R91}. In this argument one essentially reaches a point in the target domain $B$ by using a sequence of balls. If the point in question happens to be contained in the convex hull of the data domain, then several paths to reach it are possible which have similar stability properties. On the other hand, for a point outside the convex hull only one path with the best stability properties exists.

\section{Proofs involving the isoparametric mapping}\label{B:geom}

\begin{ProofOf}{Lemma~\ref{lem:Phi-HigherOrderBounds}} 
\begin{enumerate}[label=(\alph*)] 
\item From the construction of the mesh transformation, see \cite[Section 3]{LR17}, we recall that  \[
\Psi - \Theta_h  
 =   \begin{cases}  
   \Psi^{\Gamma} - \Theta_h^{\Gamma}   & \text{ on } \Cutdom, \\ 
   \ExtMeshTrafo^{\partial \Cutdom } ( \Psi^{\Gamma} - \Theta_h^{\Gamma} ) & \text{ on }  \CutdomExt \setminus \Cutdom, \\
	 0 & \text{ on } \Omega \setminus \CutdomExt. 
\end{cases}
\]	
Here, $\Psi^{\Gamma}$ and $\Theta_h^{\Gamma}$ are local mappings on the cut elements $\Cutdom$ 
which are extended to a slightly larger domain $\CutdomExt$ by a certain extension operator $\ExtMeshTrafo^{\partial \Cutdom }$, which according to \cite[Theorem 3.11]{LR17} fulfills
\begin{equation}\label{eq:bounds-ExtMeshTrafo}
\max_{ T \in  \CutelExt \setminus \Cutel} \norm{ D^n \ExtMeshTrafo^{\partial \Cutdom }  w } _{\infty,T} 
\lesssim \max_{F \in \mathcal{F}(\partial  \Cutdom) } \sum\limits_{r=n}^{q+1} h^{r-n} \norm{D^r w }_{\infty,F}, \quad n=2,\ldots,q+1. 
\end{equation}
Here, $\mathcal{F}(\partial  \Cutdom)$ denotes the set of all edges ($d=2$) or faces ($d=3$) in $\partial  \Cutdom $ and  $\CutelExt \setminus \Cutel$ the set of elements convering the extended region $\CutdomExt \setminus \Cutdom$.
We also have from \cite[Lemma 3.7]{LR17} the estimate 
\begin{equation}\label{eq:bound-Psi-Theta}
\sum\limits_{r=0}^{q+1} h^r \max_{ T \in \Cutel} \norm{D^r (\Psi^{\Gamma} - \Theta_h^{\Gamma})}_{\infty,T} \lesssim h^{q+1}     
\end{equation}
which already proves the claim (a) on $\Cutdom$. It remains to consider 
\begin{align*}
& \max_{ T \in  \CutelExt \setminus \Cutel} \norm{D^l \ExtMeshTrafo^{\partial \Cutdom } ( \Psi^{\Gamma} - \Theta_h^{\Gamma} ) } \lesssim  \max_{F \in \mathcal{F} (\partial  \Cutdom) }  \sum\limits_{r=l}^{q+1} h^{r-l}  \norm{D^r ( \Psi^{\Gamma} - \Theta_h^{\Gamma} )   }_{\infty,F}   \\
& \lesssim h^{-l}  \max_{T \in \Cutel }  \sum\limits_{r=0}^{q+1} h^{r}  \norm{D^r ( \Psi^{\Gamma} - \Theta_h^{\Gamma} )   }_{\infty,T} \lesssim h^{q+1-l}. 
\end{align*}
where (\ref{eq:bounds-ExtMeshTrafo}) and (\ref{eq:bound-Psi-Theta}) have been employed. 
For passing from the facet to the element we also used that the involved functions are continuous on the elements so that the $L^{\infty}$-norm coincides with the supremum norm, see e.g.\ \cite[Chapter 6.1]{F99}.
\item Recall that $\Phi - \ID = (\Psi - \Theta_h) \Theta_h^{-1}$. In \cite[Lemma 2]{L_GUFEMA_2017} 
the bounds 
\[
 \norm{D^l \Theta_h^{-1} }_{\infty,\Theta_h(T)} \lesssim 1, \quad
 \text{ for } l \in \{1, \ldots, q+1\} 
\]
have been shown (for $l \in \{0,1\}$ the bounds actually hold globally, see proof of \cite[Lemma 5.5.]{LR17}). 
Hence, in view of part (a) the claim follows from the product rule.
\end{enumerate}
\end{ProofOf}

\begin{ProofOf}{Lemma \ref{lem:normal_tan_proj}} 
We will only prove the results on $\Gamma$ as the statements on $\Gamma_h$ follow analogously.
\begin{enumerate}[label=(\alph*)] 
\item From the reverse triangle inequality we obtain for $x \in \Gamma$ 
\begin{align*}
	\abs{ \norm{ \dPhi^{T}(x) \normal_{\Gamma}(x) }_{2} - 1 } 
	= \abs{ \norm{ \dPhi^{T}(x) \normal_{\Gamma}(x) }_{2 } - \norm{ \normal_{\Gamma}(x) }_{2}  }  
\leq  \norm{  (\dPhi^{T}(x) - I)  \normal_{\Gamma}(x)  }_{2}. 
\end{align*}
Since $\Theta_h(\mesh)$ covers $\Omega$, there exists an element $\el$ such that $x \in \Theta_h(\el)$.
As $\dPhi^{T}$ is continuous on $\Theta_h(\el)$ we obtain:
\[
\norm{  (\dPhi^{T}(x) - I)  \normal_{\Gamma}(x)  }_{2} 
		\lesssim \norm{  \dPhi^{T} - I  }_{ \infty, \Theta_h(T)}
\lesssim h^q,
\]
where (\ref{eq:Bound-Phi-Grad}) was used in the last step.
As a result, the lower bound 
\[  \norm{ \dPhi^{T} \normal_{\Gamma} }_{ 2 } \geq 
1 -  \abs{ \norm{ \dPhi^{T} \normal_{\Gamma} }_{ 2 } - 1 } 
 \gtrsim 1 		\]
for $h$ sufficiently small follows.
\item Using (\ref{eq:normal-trafo-Phi}), (\ref{eq:Bound-Phi-Grad}) and (a) we obtain 
\begin{align*}
& \norm{ \normal_{\Gamma}  - \normal_{\Gamma_h} \circ \Phi_h^{-1} }_2 
	= \frac{1}{ \norm{ \dPhi^{T} \normal_{\Gamma} }_{2} } \norm{  \normal_{\Gamma}  \norm{ \dPhi^{T} \normal_{\Gamma} } -  \dPhi^{T}   \normal_{\Gamma} }_2  \\ 
& \lesssim \norm{ \normal_{\Gamma} \left( \norm{ \dPhi^{T} \normal_{\Gamma} }_2 - 1 \right) + \left( I - \dPhi^{T} \right) \normal_{\Gamma}  }_{2}  
  \lesssim  \abs{ \norm{ \dPhi^{T} \normal_{\Gamma} }_{2 } - 1 } + \norm{ I - \dPhi^{T} }_2  \lesssim h^q.
\end{align*}
\item From the result established in (b) we obtain 
\begin{align*}
\norm{ \Ptan{\Gamma_h} \circ \Phi_h^{-1} - \Ptan{ \Gamma } }_{2}  
	&= \norm{  ( \normal_{\Gamma_h} \circ \Phi_h^{-1} - \normal_{ \Gamma } )  (\normal_{\Gamma_h} \circ \Phi_h^{-1} )^T +  \normal_{ \Gamma } ( \normal_{ \Gamma_h } \circ \Phi_h^{-1} - \normal_{ \Gamma }  )^T  }_{2} \\ 
	& \lesssim \norm{ \normal_{\Gamma_h} \circ \Phi_h^{-1} - \normal_{ \Gamma }  }_{2} +  \norm{ ( \normal_{\Gamma_h} \circ \Phi_h^{-1} - \normal_{ \Gamma }  )^T  }_{2} \lesssim h^q.  
\end{align*}
\end{enumerate}
\end{ProofOf}

\begin{ProofOf}{Lemma \ref{lem:norm-equiv}} 
\begin{enumerate}[label=(\alph*)]
\item See \cite[proof of Lemma 5.10]{LR17}.
\item According to the chain rule 
\begin{align*}
\partial_{y_{\nu}} \partial_{y_{\mu}} \tilde{v}(y)
& = \sum\limits_{j=1}^{d} \sum\limits_{k=1}^{d} (\partial_{j} \partial_{k} v) \circ \Phi_h(y) \partial_{ y_{\nu}} [ \Phi_h ]_{j}(y)  \partial_{ y_{\mu}} [ \Phi_h ]_{k}(y) \\ 
& + \sum\limits_{k=1}^{d} (\partial_{k}v) \circ \Phi_h(y) \partial_{ y_{\nu}} \partial_{ y_{\mu}} [ \Phi_h ]_{k}(y),
\end{align*}
where $ [ \Phi_h ]_{j}, j=1,\ldots,d$ denote the components of $\Phi_h$.
In view of $\Theta_h = \Phi_h^{-1} \circ \Psi$ the transformation formula for integrals yields
\begin{align*}
& \sum\limits_{ \el \in \mesh^{i} }  \int\limits_{ \Theta_h(T) } \abs{ \partial_{y_{\nu}} \partial_{y_{\mu}} \tilde{v}  }^2 \; \dY
= \sum\limits_{ \el \in \mesh^{i} }  \int\limits_{ \Psi(T) } \abs{ ( \partial_{y_{\nu}} \partial_{y_{\mu}} \tilde{v} ) \circ \Phi_h^{-1}(x)  }^2 \det( \dPhi^{-1}(x)) \; \dX \\ 
	& \lesssim \sum\limits_{ \el \in \mesh^{i} }  \sum\limits_{j,k} \int\limits_{ \Psi(T) } \abs{ (\partial_{j} \partial_{k} v (x) }^2 \abs{ \partial_{\nu} [\Phi_h]_{j}(\Phi_h^{-1}(x)) \partial_{\mu} [\Phi_h]_{k}(\Phi_h^{-1}(x)) }^2  \; \dX \\ 
& + \sum\limits_{ \el \in \mesh^{i} } \sum\limits_{k=1}^{d} \int\limits_{ \Psi(T) } \abs{ \partial_{k} v (x) }^2 \abs{ \partial_{\nu} \partial_{\mu} [\Phi_h]_{k}(\Phi_h^{-1}(x)) }^2 \; \dX \\
& \lesssim \sum\limits_{ \abs{\alpha} \leq 2  } \sum\limits_{ \el \in \mesh^{i} }  \int\limits_{ \Psi(T) } \abs{ D_x^{\alpha} v  }^2 \; \dX,
\end{align*}
where we applied the bounds for the second derivatives from Lemma~\ref{lem:Phi-HigherOrderBounds} (b) on  $\Phi_h^{-1} \circ \Psi(T) = \Theta_h(T)  $ and (\ref{eq:DPhi_estimates}). 
\end{enumerate}
\end{ProofOf}

\begin{ProofOf}{Lemma \ref{lem:Deriv-IF}} 
We recall from \cite[eq. (3.40)]{LR17} that the measures $\dS_{\Gamma} $ on $\Gamma$ and $\dS_{\Gamma_h}$ on $\Gamma_h$ are related as follows:  
\begin{equation}\label{eq:Gamma-measure-trafo-Phi}
\dS_{\Gamma} = \det(\dPhi) \norm{ \dPhi^{-T} \normal_{\Gamma_h }  }_{2}  \dS_{\Gamma_h}, \quad 
\dS_{\Gamma_h} = \det(\dPhi^{-1}) \norm{ \dPhi^{T} \normal_{\Gamma }  }_{2}  \dS_{\Gamma}. 
\end{equation}
\begin{enumerate}[label=(\alph*)] 
\item We will only show the result for the gradient. Using $(\nabla v) \circ \Phi_h = \dPhi^{-T} \nabla \tilde{v}$ and the transformation rule (\ref{eq:Gamma-measure-trafo-Phi}) for the measure yields 
\begin{align*}
\int\limits_{\Gamma} \abs{\nabla v}^2 \dS_{\Gamma} 
	&= \int\limits_{\Gamma_h} \abs{ \dPhi^{-T} \nabla \tilde{v} }^2 \det(\dPhi) \norm{ \dPhi^{-T} \normal_{\Gamma_h }  }_{2} \dS_{\Gamma_h} 
\lesssim \int\limits_{\Gamma_h} \abs{ \nabla \tilde{v}}^2 \dS_{\Gamma_h}, 
\end{align*} 
where the last inequality follows from\footnote{Note that (\ref{eq:DPhi_estimates}) makes a statement about $L^{\infty}$-norms in the volume whereas the integral is defined on a hypersurface. To pass from one to the other, we write the latter as sum of integrals over $\Theta_h( \Gamma_{\el} ), \el \in \Cutel$. Using  $\Phi_h \in [C^{q+1}( \Theta_h(\mesh))]^d$ then allows to estimate the $L^{\infty}$-norms on $\Theta_h(\Gamma_{\el})$ by the volumetric norms on  $\Theta_h(\el)$. } equation (\ref{eq:DPhi_estimates}). 
The reverse estimate is obtained in the same way by transforming from $\Gamma_h$ to $\Gamma$ and appealing to equations (\ref{eq:Bound-Phi-Grad})-(\ref{eq:DPhi_estimates}). 
The situation is similar in (b) and (c) where we will likewise only show one of the directions.
\item Using $\dPhi^T \nabla v = (\nabla \tilde{v}) \circ \Phi_h^{-1} $ and the transformation rule (\ref{eq:Gamma-measure-trafo-Phi}) gives 
\begin{align*}
& \int\limits_{\Gamma_h} \norm{ \nabla \tilde{v} \cdot \normal_{ \Gamma_h } }_2^2 \dS_{\Gamma_h	} 
= \int\limits_{\Gamma} \norm{ \dPhi^{T} \nabla v \cdot ( \normal_{ \Gamma_h } \circ \Phi_h^{-1}) }_2^2 \det( \dPhi^{-1} ) \norm{ \dPhi^T \normal_{\Gamma }  }_2   \dS_{\Gamma } \\ 
	  \lesssim & \int\limits_{\Gamma} \left[ \norm{ \dPhi^{T} \nabla v \cdot ( \normal_{ \Gamma_h } \circ \Phi_h^{-1} - \normal_{\Gamma} ) }_2 + \norm{ (\dPhi^{T} - I ) \nabla v \cdot \normal_{ \Gamma }   }_2 \right]^2 \abs{ \det( \dPhi^{-1} ) } \norm{ \dPhi^T \normal_{\Gamma }  }_2   \dS_{\Gamma } \\ 
& + \int\limits_{\Gamma} \norm{ \nabla v \cdot \normal_{ \Gamma } }_2^2 \abs{ \det( \dPhi^{-1} )} \norm{ \dPhi^T \normal_{\Gamma }  }_2  \dS_{\Gamma } \\ 
& \lesssim h^{2q}  \norm{ \nabla v }_{\Gamma}^2 
	+ \norm{ \nabla v \cdot \normal_{ \Gamma  }  }_{ \Gamma }^2,
\end{align*} 
where Lemma~\ref{lem:normal_tan_proj} (b), equation (\ref{eq:Bound-Phi-Grad}) and (\ref{eq:DPhi_estimates}) were employed in the last step.
\item Writing $ \Ptan{\Gamma_h} \circ \Phi_h^{-1} \dPhi^{T} = \Ptan{\Gamma_h} \circ \Phi_h^{-1} (\dPhi^{T} - I ) + (\Ptan{\Gamma_h} \circ \Phi_h^{-1} - \Ptan{\Gamma}) + \Ptan{\Gamma}$ we deduce the claim from Lemma~\ref{lem:normal_tan_proj} (c), equation (\ref{eq:Bound-Phi-Grad}) and (\ref{eq:DPhi_estimates}):  
\begin{align*}
	& \int\limits_{ \Gamma_h } \norm{ \nabla_{\Gamma_h} \tilde{ v} }_2^2 \dS_{\Gamma } 
= \int\limits_{ \Gamma } \norm{  \Ptan{\Gamma_h} \circ \Phi_h^{-1} \dPhi^{T} \nabla v }_2^2  \det( \dPhi^{-1} ) \norm{ \dPhi^T \normal_{\Gamma } }_2  \dS_{\Gamma } \\ 
	& \lesssim \int\limits_{ \Gamma } \left[ \norm{  \Ptan{\Gamma_h} \circ \Phi_h^{-1} ( \dPhi^{T} - I) \nabla v }_2 + \norm{ ( \Ptan{\Gamma_h} \circ \Phi_h^{-1} -  \Ptan{\Gamma} ) \nabla v }_2 \right]^2 \abs{ \det( \dPhi^{-1} )} \norm{ \dPhi^T \normal_{\Gamma } }_2  \dS_{\Gamma } \\
  & + \int\limits_{ \Gamma } \norm{ \Ptan{\Gamma} \nabla v  }_2^2    \abs{ \det( \dPhi^{-1} )} \norm{ \dPhi^T \normal_{\Gamma } }_2 \dS_{\Gamma }  \\ 
	& \lesssim h^{2q}  \norm{ \nabla v }_{\Gamma}^2 + 
\norm{ \nabla_{\Gamma} v  }_{ \Gamma }^2. 
\end{align*}
\end{enumerate}
\end{ProofOf}

\begin{ProofOf}{Lemma \ref{lem:pullback}} 
\begin{enumerate}[label=(\alph*)] 
\item Let $ \gamma(s) := f( s \Phi_h^{-1}(x) + (1-s) x )$ for $s \in [0,1]$ and $x \in M$. We have 
\begin{align*}
f \circ \Phi_h^{-1}(x) - f(x) = \int\limits_{0}^{1} \gamma^{\prime}(s) \mathrm{d}s
& = \int\limits_{0}^{1} (\nabla f( s \Phi_h^{-1}(x) + (1-s) x) , \Phi_h^{-1}(x) - x ) \; \mathrm{d}s \\ 
	& \lesssim \norm{ \nabla f }_{\infty,U} \norm{ \Phi_h^{-1}(x) - x }_2. 
\end{align*}
Hence, we obtain from Lemma~\ref{lem:Trafo-Grad-bounds} that 
\[
\norm{ f \circ \Phi_h^{-1} - f }_M \lesssim \sqrt{\abs{M}}  \norm{\nabla f}_{\infty,U} \norm{ \Phi_h^{-1} - \ID }_{\infty,\Omega} \lesssim h^{q+1} \sqrt{\abs{M}} \norm{\nabla f}_{\infty,U}. 
\]
\item We have
\begin{align*}
& \norm{ \nabla ( f \circ \Phi_h^{-1} -f ) }_M = \norm{ \dPhi^{-T} \nabla f \circ \Phi_h^{-1} - \nabla f }_{M } \\ 
& \lesssim \norm{  (\dPhi^{-T} - I) [ (\nabla f \circ \Phi_h^{-1} - \nabla f ) + \nabla f   ] + (\nabla f \circ \Phi_h^{-1} - \nabla f  ) }_{ M }  \\
& \lesssim \left( \norm{\dPhi^{-T} - I}_{\infty,M} + 1 \right) \norm{ \nabla f \circ \Phi_h^{-1} - \nabla f }_{M} + \norm{ \dPhi^{-T} - I }_{ \infty,M} \norm{ \nabla{f}}_M \\     
& \lesssim h^{q+1} \sqrt{ \abs{M} } \norm{f}_{W^{2,\infty}(U)} + h^q \norm{\nabla f}_M,
\end{align*}
where part (a) was employed to estimate the first term and we used (\ref{eq:DPhi_estimates}) to obtain the bound for the second term.
\item By the chain rule we have 
\begin{align}
& \partial_{x_\nu} \partial_{ x_{\mu} } (f \circ \Phi_h) = 
(\partial_{\nu} \partial_{\mu} f) \circ \Phi_h +  
(\partial_{\nu} \partial_{\mu} f) \circ \Phi_h ( \partial_{x_{\mu}} [ \Phi_h ]_{ \mu} - 1 ) \nonumber \\ 
& (\partial_{\nu} \partial_{\mu} f) \circ \Phi_h ( \partial_{x_{\nu}} [ \Phi_h ]_{ \nu} - 1 ) \partial_{x_{\mu}} [ \Phi_h ]_{ \mu} 
+ \sum\limits_{ (i,j) \neq (\mu,\nu) } (\partial_{j} \partial_{i} f) \circ \Phi_h \partial_{x_{\nu}} [ \Phi_h ]_{ j } \partial_{x_{\mu}} [ \Phi_h ]_{ i } \nonumber \\
& + \sum\limits_{i} (\partial_{i} f) \circ \Phi_h \partial_{x_{\nu}} \partial_{x_{\mu}} [\Phi_h]_{i},
\label{eq:pullback-proof-snd-deriv-chain-rule}
\end{align}
where $[\Phi_h]_{i}$ denotes the $i$-th component of $\Phi_h$. 
According to Lemma~\ref{lem:Trafo-Grad-bounds} we have 
\[
\norm{ \partial_{x_j} [ \Phi_h ]_{i} }_{ \infty, \Omega } \lesssim h^q \text{ for } i \neq j, \quad
		\norm{ \partial_{x_j} [ \Phi_h ]_{j} -1 }_{ \infty, \Omega }  \lesssim h^q \text{ for } j=1,\ldots,d. 
\]
Combining this with the bounds for the second derivatives from Lemma~\ref{lem:Phi-HigherOrderBounds} on $M = \Theta_h(T)$ yields 
\begin{align*}
& \norm{ \partial_{x_\nu} \partial_{ x_{\mu} } (f \circ \Phi_h) - \partial_{x_\nu} \partial_{ x_{\mu}  } f  }_{ M } \\ 
& \lesssim 
 \norm{ (\partial_{x_\nu} \partial_{ x_{\mu} } f) \circ \Phi_h - \partial_{x_\nu} \partial_{ x_{\mu}  } f  }_{ M } + \sqrt{\abs{M}} \left( h^q \norm{f}_{ W^{2,\infty}(V) } + h^{q-1} \norm{f}_{ W^{1,\infty}(V) }  \right)  \\ 
& \lesssim h^{q-1} \sqrt{ \abs{M} } \norm{ f }_{W^{3,\infty}(V)},
\end{align*}
where part (a) was once again applied in the last step.
\item If we precompose equation (\ref{eq:pullback-proof-snd-deriv-chain-rule}) with $\Phi_h^{-1}$ we obtain
\begin{align*}
	& \partial_{x_{\nu}} \partial_{x_{\mu}} (f \circ \Phi_h) \circ \Phi_h^{-1}  - \partial_{x_{\nu}} \partial_{x_{\mu}} f =  (\partial_{\nu} \partial_{\mu} f) ( \partial_{x_{\nu}} [ \Phi_h ]_{ \nu} \circ \Phi_h^{-1} - 1 ) \partial_{x_{\mu}} [ \Phi_h ]_{ \mu} \circ \Phi_h^{-1}    \\
& + (\partial_{\nu} \partial_{\mu} f) ( \partial_{x_{\mu}} [ \Phi_h ]_{ \mu} \circ \Phi_h^{-1} - 1 )
+ \sum\limits_{ (i,j) \neq (\mu,\nu) } (\partial_{j} \partial_{i} f)  \partial_{x_{\nu}} [ \Phi_h ]_{ j } \circ \Phi_h^{-1} \partial_{x_{\mu}} [ \Phi_h ]_{ i } \circ \Phi_h^{-1} \\ 
	&  + \sum\limits_{i} (\partial_{i} f)  \partial_{x_{\nu}} \partial_{x_{\mu}} [\Phi_h]_{i} \circ \Phi_h^{-1}.
\end{align*}
Since the term  $\partial_{x_{\nu}} \partial_{x_{\mu}} f$ is now on the left hand side of the equation, it is no longer necessary to appeal to part (a) to estimate this term. 
We can directly apply Lemma~\ref{lem:Trafo-Grad-bounds} to estimate the first order derivatives of $\Phi_h$ and Lemma~\ref{lem:Phi-HigherOrderBounds} to obtain bounds for the second order derivatives on $\Phi_h^{-1} \circ \Psi(T) = \Theta_h(T)$. Hence, 
\begin{align*}
  \norm{ \partial_{x_{\nu}} \partial_{x_{\mu}} (f \circ \Phi_h) \circ \Phi_h^{-1}  - \partial_{x_{\nu}} \partial_{x_{\mu}} f  }_{ \Psi(T) }
\lesssim
   h^{q} \norm{f}_{ H^2( \Psi(T)) } +  h^{q-1 } \norm{ \nabla f}_{  \Psi(T) }
\end{align*}
follows.
\end{enumerate}
\end{ProofOf}

\section*{Acknowledgements}
We would like to thank Mihai Nechita for valuable input on the interface finite element formulation. 

\bibliographystyle{plain}

\bibliography{biblio}

\end{document}